    \crefname{equation}{Eq.}{Eqs.}
    \newtheorem{theorem}{Theorem}
    \newtheorem{lemma}[theorem]{Lemma}
    \newtheorem{corollary}[theorem]{Corollary}
    \newtheorem{example}[theorem]{Example}
    \theoremstyle{definition}
    \newtheorem{definition}[theorem]{Definition}
    \theoremstyle{remark}
    \newtheorem{remark}[theorem]{Remark}
    \theoremstyle{remark}
    \newtheorem*{rmk}{Remark}
    \theoremstyle{theorem}
    \newtheorem*{thm}{Theorem}
    \theoremstyle{theorem}
    \theoremstyle{theorem}
    \newtheorem*{exmp}{Example}
    \DeclarePairedDelimiter\abs{\lvert}{\rvert}
    \DeclarePairedDelimiter\norm{\lVert}{\rVert}
    \DeclareMathOperator{\pb}{\mathbf{P}\mathopen{}}
    \DeclareMathOperator{\E}{\mathbf{E}\mathopen{}}
    \newcommand\Psub [1]{\pb_{\! #1}}
    \DeclareMathSymbol{\Z}{\mathbin}{AMSb}{"5A}
    \DeclareMathSymbol{\R}{\mathbin}{AMSb}{"52}
    \newcommand{\df}[1]{\,\mathrm{d}#1}
    \newcommand\st{\, \colon \;}
    \newcommand{\ue}{\mathrm{e}}
\begin{document}

    \title[Inverse Problems for Ergodicity of MC]{Inverse Problems for Ergodicity of Markov Chains}
    \author{Zhi-Feng Wei$^{1,2}$}
    \address{$^1$School of Mathematical Sciences and Lab.\@ Math.\@ Com.\@ Sys.\@, Beijing Normal University, Beijing 100875, China}
    \address{$^2$Department of Mathematics, Indiana University Bloomington, 831.\@ E 3rd St.\@, Bloomington, IN 47405, USA \texttt{zfwei@iu.edu}}

    \subjclass[2010]{
        Primary: 60J27,\,
             60J10;\quad
             Secondary: 60J75,\,
             82C22
             .
    }

    \keywords{Markov Chain, $Q$-process, Ergodicity, Non-ergodicity, instability, test function}

    \begin{abstract}
         For both continuous-time and discrete-time Markov Chains, we provide criteria for inverse problems of classical types of ergodicity: (ordinary) erogodicity, algebraic ergodicity, exponential ergodicity and strong ergodicity. Our criteria are in terms of the existence of solutions to inequalities involving the $Q$-matrix (or transition matrix $P$ in time-discrete case) of the process. Meanwhile, these criteria are applied to some examples and provide ``universal" treatment, including single birth processes and several multi-dimensional models.
    \end{abstract}

    \maketitle

\section{Introduction}\label{chp_intro}
        Criteria for various types of ergodicity by drift condition for Markov Chains have been studied extensively over the past decades, see \cite{hou1988,twd1981,mao2003,mao2004,wang2003}. According to these criteria, a solution to some inequality implies, for example, strong ergodicity of a Markov Chain. However, one may find it not that easy to make sure a process, for instance, not being strongly ergodic. In fact, the celebrated strong ergodicity criteria with drift condition reads as follows.
        \begin{thm}[\cite{hou1988,twd1981}]
            Let $Q$ be an irreducible regular $Q$-matrix and $H$ a non-empty finite subset of a countable state space $E$. Then the $Q$-process is strongly ergodic if and only if there exists a bounded solution $(y_i)_{i\in E}$ to inequality
                \begin{displaymath}
                    \sum_{j\in E}q_{ij}y_j\leqslant -1,\qquad i\notin H.
                \end{displaymath}
        \end{thm}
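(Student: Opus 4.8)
The plan is to route both implications through the probabilistic characterization of strong ergodicity by uniformly bounded mean hitting times, and then to translate that characterization into the solvability of the displayed drift inequality. Write $\tau_H=\inf\{t\geqslant 0\st X_t\in H\}$. The pivot is the classical equivalence (see \cite{hou1988,twd1981}): \emph{the $Q$-process is strongly ergodic if and only if $\sup_{i\in E}\Esubbig{i}{\tau_H}<\infty$.} I would recall why this holds. For the forward direction, strong ergodicity gives $\delta(t):=\sup_i\norm{P_i(t,\cdot)-\pi}\to 0$ (with $\pi$ the stationary distribution, strictly positive on $E$), so fixing $t_0$ with $\delta(t_0)<\pi(H)$ yields $\sup_i\Psubbig{i}{\tau_H>t_0}\leqslant\sup_i\Psubbig{i}{X_{t_0}\notin H}\leqslant 1-\pi(H)+\delta(t_0)=:\rho<1$; iterating with the Markov property at the times $nt_0$ gives $\Psubbig{i}{\tau_H>nt_0}\leqslant\rho^n$ and hence $\sup_i\Esubbig{i}{\tau_H}\leqslant t_0/(1-\rho)<\infty$. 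The reverse direction is the substantial half: passing to a fixed state $k\in H$ one gets $\sup_i\Esubbig{i}{\tau_k}<\infty$ from the finiteness of $H$ and irreducibility, and the regeneration structure at $k$ (bounded-mean return times) yields positive recurrence together with the uniform total-variation convergence defining strong ergodicity; I would cite this rather than reprove it.

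Granting the characterization, it remains to show that $\sup_i\Esubbig{i}{\tau_H}<\infty$ is equivalent to the existence of a bounded solution of the inequality. For ``$\Rightarrow$'': put $W_i:=\Esubbig{i}{\tau_H}$, so $W\equiv 0$ on $H$; first-step analysis shows $\sum_{j\in E}q_{ij}W_j=-1$ for $i\notin H$, the series converging absolutely because $W$ is bounded and $\sum_{j\neq i}q_{ij}=-q_{ii}<\infty$, so $(W_i)$ is itself a bounded solution. For ``$\Leftarrow$'': let $(y_i)$ be bounded with $\sum_{j\in E}q_{ij}y_j\leqslant -1$ for $i\notin H$, and set $C:=\sup_i\abs{y_i}$. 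Exhaust $E$ by finite sets $F_n\uparrow E$ with $H\subseteq F_1$, let $\sigma_n$ be the first exit time of $F_n$; on the finite set $F_n$ the jump rates are bounded, so Dynkin's formula gives
\begin{displaymath}
    \Esubbig{i}{y_{X_{t\wedge\tau_H\wedge\sigma_n}}}-y_i=\Esubbigg{i}{\int_0^{t\wedge\tau_H\wedge\sigma_n}\!(Qy)(X_s)\df s}\leqslant-\Esubbig{i}{t\wedge\tau_H\wedge\sigma_n},
\end{displaymath}
using $(Qy)(X_s)\leqslant -1$ while $X_s\notin H$. Since $Q$ is regular, $\sigma_n\uparrow\infty$ almost surely; letting $n\to\infty$ (dominated convergence on the left, monotone on the right) and then $t\to\infty$ (monotone convergence, together with $\Esubbig{i}{y_{X_{t\wedge\tau_H}}}\geqslant -C$) gives $\Esubbig{i}{\tau_H}\leqslant y_i+C\leqslant 2C$ for every $i$; in particular $\tau_H<\infty$ almost surely and $\sup_i\Esubbig{i}{\tau_H}<\infty$, which is strong ergodicity.

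The main obstacle I anticipate is the rigorous use of Dynkin's formula at the unbounded stopping time $\tau_H$ when the jump rates $q_i$ are not uniformly bounded: only a one-sided bound on $Qy$ is available, and the two limiting passages ($n\to\infty$ and $t\to\infty$) must be justified while ruling out a priori that $\tau_H=+\infty$ with positive probability. The localization by the exit times $\sigma_n$, combined with the fixed sign of $Qy$ before $\tau_H$, is exactly what makes every monotone/dominated convergence step legitimate — and this is the point at which the hypothesis that $Q$ is regular enters this half of the argument. The other delicate point, the regeneration/uniform-convergence half of the hitting-time characterization, I would take from the literature.
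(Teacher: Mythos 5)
The paper never proves this statement: it appears in the Introduction as a quoted classical result attributed to \cite{hou1988,twd1981}, so there is no in-paper proof to compare against. Judged on its own terms, your argument is correct and is essentially the standard textbook proof. The forward half of your hitting-time characterization (strong ergodicity $\Rightarrow$ $\sup_i\E_i\mkern-1.5mu\tau_H<\infty$ via $\Psub{i}[\tau_H>nt_0]\leqslant\rho^n$) is fine, as is the first-step-analysis identity $\sum_j q_{ij}W_j=-1$ for $W_i=\E_i\mkern-1.5mu\tau_H$ extended by $0$ on $H$, and the localized Dynkin argument with $\sigma_n\uparrow\infty$ (this is exactly where regularity is used, as you say). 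Two remarks: the one genuinely deep step — bounded mean return times imply uniform total-variation convergence — you outsource to the literature, which is reasonable here but means the ``substantial half'' is cited rather than proved; and the paper's own probabilistic characterization is phrased via $\sigma_H$ (first return after the first jump), which coincides with your $\tau_H$ for starting points $i\notin H$, so the two formulations match.

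It is worth noting that the machinery the paper actually develops in Section 2 yields the drift-inequality $\Leftrightarrow$ bounded-hitting-times equivalence without any stochastic calculus, and this is closer in spirit to how \cite{chen2004} proves the result. Since $Q$ is conservative, $\sum_j q_{ij}(y_j+c)=\sum_j q_{ij}y_j$, so a bounded solution may be shifted to be non-negative; dividing $\sum_j q_{ij}y_j\leqslant-1$ by $q_i$ and discarding the non-negative terms over $j\in H$ gives $y_i\geqslant\sum_{j\notin H}\Pi_{ij}y_j+1/q_i$ for $i\notin H$, which is a controlling equation in the sense of the paper's Section 2.1; the Comparison Principle (Theorem 9 there) together with the identification of $(\E_i\mkern-1.5mu\sigma_H)$ as the minimal solution (Corollary 12) immediately gives $\E_i\mkern-1.5mu\sigma_H\leqslant y_i$, hence boundedness. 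Your Dynkin-formula route buys a self-contained pathwise argument that generalizes beyond countable state spaces; the minimal-solution route buys a purely algebraic two-line deduction once the general theory is in place, and it is the one consistent with the rest of the paper. Neither choice affects correctness.
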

        \par
        If we are proving a $Q$-process is not strongly ergodic using this criterion, we have to show that there is no bounded solution to this inequality. Neverthless, this is not so practical. We intend to complement ergodicity criteria in this paper. For instance, can we assert non-strong ergodicity of a $Q$-process from some inequality and some of its solutions?
        \par
        Since we are dealing with ergodic properties, we assume processes considered are all recurrent without loss of generality. And we will deal with not only continuous-time but also discrete-time Markov Chains using exactly the same method.
        \par
        Consider an irreducible regular $Q$-matrix $Q=(q_{ij}\st i,j\in E)$ on a countable state space $E$ with transition probability matrix $P(t)=\bigl(p_{ij}(t)\bigr)_{t\geqslant0}$. Meanwhile, denote
        \begin{displaymath}
            q_i\coloneqq -q_{ii}=\sum_{j\neq i}q_{ij}<\infty,\qquad i\in E.
        \end{displaymath}
        We have the following ergodic notions.
            \begin{enumerate}[(1)]
                \item The $Q$-process is ergodic, if for each $i$, ${\norm[\big]{p_{i\cdot}(t)- \pi}_{\mathrm{Var}} \coloneqq  \sum_{j\in E}
                    \abs[\big]{p_{ij}(t)- \pi_j} \to 0}$ as $t \to \infty$.
                \item (algebraic ergodicity) The $Q$-process is $\ell$-ergodic for some integer $\ell\geqslant 1$, if for each $i,j\in E$, $\abs[\big]{p_{ij}(t)- \pi_j} = O\bigl(t^{-(\ell-1)}\bigr)$ as $t \to \infty$.
                \item The $Q$-process is exponentially ergodic, if for each $i,j\in E$, ${\abs[\big]{p_{ij}(t)- \pi_j}=O(\ue^{-\beta t})}$ as $t \to \infty$ for some $\beta>0$.
                \item The $Q$-process is strongly ergodic, if $\lim_{t\to\infty}\sup_{i\in E} \norm[\big]{ p_{i\cdot}(t)- \pi}_{\mathrm{Var}} = 0$.
            \end{enumerate}
        Note that we occasionally say a $Q$-process is $0$-ergodic when it is recurrent for ease of terminology. Also, we may say a $Q$-process is $1$-ergodic if it is ergodic.
        \par
        Set
        \begin{displaymath}
            \sigma_H \coloneqq \inf\bigl\{t\geqslant \eta_1\st X_t\in H\bigr\},\qquad H\subseteq E,
        \end{displaymath}
        where $(X_t)_{t\geqslant 0}$ is the $Q$-process and $\eta_1$ is the first jump time. There are probabilistic descriptions of above ergodic notions.
            \begin{enumerate}[(1)]
                \item The $Q$-process is ergodic if and only if (abbr.\@ iff) $\max_{i\in H}\E_i\mkern-1.5mu{\sigma_H}$ is finite for some (equivalently, for any) non-empty finite subset $H$ of $E$.
                \item (algebraic ergodicity) The $Q$-process is $\ell$-ergodic for some integer $\ell\geqslant 1$ iff $\max_{i\in H}\E_i\mkern-1.5mu\sigma_H^\ell$ is finite for some (equivalently, for any) non-empty finite subset $H$ of $E$.
                \item The $Q$-process is exponentially ergodic iff $\max_{i\in H}\E_i\mkern-1.5mu\ue^{\lambda\sigma_H}$ is finite for some positive $\lambda$ (with $\lambda<q_i, \forall i\in E$) and some (equivalently, for any) non-empty finite subset $H$ of $E$.
                \item The $Q$-process is strongly ergodic iff $\bigl(\E_i\mkern-1.5mu\sigma_H\bigr)_{i\notin H}$\vadjust{\kern2pt}%
                    is bounded for some (equivalently, for any) non-empty finite subset $H$ of $E$.
            \end{enumerate}
        \par
        Now, we declare our main results. Let $\Pi=\bigl(\Pi_{ij}\st i,j\in E\bigr)$ be the embedding chain of the $Q$-process, where we have
                    \begin{numcases}
                        {\Pi_{ij}=}
                            q_{ij}/q_i,\qquad \nonumber &$j\neq i$,\\
                            0, &$j=i$\nonumber.
                    \end{numcases}
        \begin{theorem}\label{in_erg_con}
            Let $Q$ be an irreducible regular $Q$-matrix and $H$ a non-empty finite subset of $E$. Then the $Q$-process is non-ergodic iff there is a sequence $\{y^{(n)}\}^{\infty}_{n=1}$, where $y^{(n)}=\bigl(y^{(n)}_i\bigr)_{i\in E}$ for each $n\geqslant 1$, and $\{y^{(n)}\}^{\infty}_{n=1}$ satisfies the following conditions:
                \begin{enumerate}[\upshape (1)]
                    \item for each $n\geqslant 1$, $\bigl(y^{(n)}_i\bigr)_{i\in E}$ satisfies $\sup_{i\in E} y^{(n)}_i<\infty$ and solves inequality
                         \begin{equation}\label{in_erg_con_eq}
                            y_i\leqslant\sum_{j\notin H}\Pi_{ij}y_j+\frac{1}{q_i},\qquad i\in E;
                         \end{equation}
                    \item $\sup_{n\geqslant 1} \max_{i\in H} y^{(n)}_i =\infty$ (or equivalently, $\varlimsup_{n\to \infty} \max_{i\in H} y^{(n)}_i =\infty$).
                \end{enumerate}
        \end{theorem}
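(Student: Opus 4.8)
The approach rests on the probabilistic characterisation recalled in the Introduction: since the chain is recurrent by standing assumption, it is non-ergodic exactly when $\max_{i\in H}\E_i\sigma_H=\infty$, i.e.\@ when $\E_{i^{*}}\sigma_H=\infty$ for some $i^{*}\in H$. Write $\hat\Pi$ for the sub-stochastic matrix $\hat\Pi_{ij}:=\Pi_{ij}\mathbf 1_{j\notin H}$ and $c:=(1/q_i)_{i\in E}$, so that \eqref{in_erg_con_eq} is the inequality $y\leqslant\hat\Pi y+c$. Decomposing $\sigma_H$ according to the first jump (on $\{X_{\eta_1}\in H\}$ it equals $\eta_1$; on $\{X_{\eta_1}=j\notin H\}$ it equals $\eta_1$ plus a time distributed as $\sigma_H$ under $\pb_j$) shows that $u:=(\E_i\sigma_H)_{i\in E}$ solves $u=\hat\Pi u+c$ on all of $E$, and moreover $u_i=\sum_{k\geqslant0}(\hat\Pi^{k}c)_i=\lim_{m}\E_i\bigl[\sum_{k=0}^{(m\wedge T_H)-1}1/q_{\xi_k}\bigr]$, where $T_H:=\inf\{m\geqslant1:\xi_m\in H\}$ for the embedded chain $(\xi_m)_{m\geqslant0}$ and the partial sums increase to $u_i$. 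These identities drive both directions.

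\emph{Existence of the sequence forces non-ergodicity.} I would argue by contraposition: assume the chain ergodic and let $y$ be \emph{any} function with $s:=\sup_{i\in E}y_i<\infty$ solving \eqref{in_erg_con_eq}. Iterating the inequality (an easy induction, using that $\hat\Pi$ is monotone) gives $y\leqslant\hat\Pi^{m}y+\sum_{k=0}^{m-1}\hat\Pi^{k}c$ for every $m$; since $(\hat\Pi^{m}y)_i\leqslant s\,\pb_i[T_H>m]$ and, for $i\in H$, $\pb_i[T_H>m]\to0$ by recurrence, letting $m\to\infty$ yields $y_i\leqslant\E_i\sigma_H$ for all $i\in H$. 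Hence $\max_{i\in H}y_i\leqslant\max_{i\in H}\E_i\sigma_H$, which is finite by ergodicity. Applying this to each member of a sequence satisfying condition~(1) gives the uniform bound $\sup_{n\geqslant1}\max_{i\in H}y^{(n)}_i\leqslant\max_{i\in H}\E_i\sigma_H<\infty$, so condition~(2) fails.

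\emph{Non-ergodicity produces the sequence.} For this direction I would construct the $y^{(n)}$ explicitly by truncation. Non-ergodicity forces $E$ to be infinite; fixing finite sets $H\subseteq F_1\subseteq F_2\subseteq\cdots$ with $\bigcup_{n}F_n=E$ and setting $\tau_n:=\inf\{t\geqslant0:X_t\notin F_n\}$, define
\[
  y^{(n)}_i:=\E_i\bigl[\sigma_H\wedge\tau_n\bigr].
\]
Then $y^{(n)}_i=0$ for $i\notin F_n$, while for $i\in F_n$ one has $0<y^{(n)}_i\leqslant\E_i\tau_n<\infty$ (a finite set is left in finite mean time under an irreducible chain), so $\sup_{i\in E}y^{(n)}_i<\infty$ for each $n$. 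A first-jump analysis of $\sigma_H\wedge\tau_n$ for $i\in F_n$ — on $\{X_{\eta_1}\in H\}$ and on $\{X_{\eta_1}\notin F_n\}$ the stopping time equals $\eta_1$, while on $\{X_{\eta_1}=j\in F_n\setminus H\}$ it equals $\eta_1$ plus an independent copy started from $j$ — yields
\[
  y^{(n)}_i=\frac{1}{q_i}+\sum_{j\in F_n\setminus H}\Pi_{ij}y^{(n)}_j=\frac{1}{q_i}+\sum_{j\notin H}\Pi_{ij}y^{(n)}_j,
\]
the second equality because $y^{(n)}$ vanishes off $F_n$; so \eqref{in_erg_con_eq} holds with equality for $i\in F_n$, and for $i\notin F_n$ it reads $0\leqslant 1/q_i+\sum_{j\notin H}\Pi_{ij}y^{(n)}_j$, which is clear since $y^{(n)}\geqslant0$. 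Thus condition~(1) holds. Finally, regularity of $Q$ (no explosion) gives $\tau_n\uparrow\infty$ almost surely, hence $\sigma_H\wedge\tau_n\uparrow\sigma_H$, and monotone convergence yields $y^{(n)}_i\uparrow\E_i\sigma_H$ for each $i$; taking $i=i^{*}$ gives $\max_{i\in H}y^{(n)}_i\geqslant y^{(n)}_{i^{*}}\to\infty$, which is condition~(2).

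I expect the only real friction to be the bookkeeping in the first-jump analysis of the truncated return time: one must keep straight the ``$t\geqslant\eta_1$'' convention built into $\sigma_H$ (a return, not a hitting, time) against the plain hitting-time behaviour that reappears after the shift by $\theta_{\eta_1}$ — legitimate because the post-jump state lies outside $H$ in the only case that contributes — together with the three-way case split on $X_{\eta_1}$; establishing the limit $y^{(n)}_i\uparrow\E_i\sigma_H$ and the bound $y_i\leqslant\E_i\sigma_H$ in the presence of a solution that is merely bounded above is otherwise just the minimal-non-negative-solution circle of ideas plus monotone convergence. The discrete-time statement follows by the identical argument with $\eta_1\equiv1$ and $q_i\equiv1$, replacing $\sigma_H\wedge\tau_n$ by the corresponding stopping time of the chain.
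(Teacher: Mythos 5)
Your proposal is correct, and it follows the same two-part skeleton as the paper (an upper bound $y_i\leqslant\E_i\sigma_H$ for solutions that are bounded above, plus an increasing finite approximation of $\E_i\sigma_H$), but you implement both halves differently. Note first that the paper never proves \Cref{in_erg_con} separately: it is the case $\ell=0$ of \Cref{in_aerg_con}. For sufficiency the paper goes through \Cref{cmprs_a}, setting $z_i=\E_i\sigma_H-y_i$ and invoking the transience test-function criterion (\Cref{dmc}) for the recurrent embedded chain to conclude $z\geqslant 0$; you instead iterate $y\leqslant\hat\Pi y+c$ and kill the remainder $\hat\Pi^{m}y$ using $\sup_i y_i<\infty$ and $\pb_{i}[T_H>m]\to 0$. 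Recurrence enters at exactly the same spot in both arguments, and your version is more elementary and self-contained; it also visibly survives the case $\inf_i y_i=-\infty$ permitted by the Remark, since $\hat\Pi$ applied to a function bounded above never produces $\infty-\infty$. For necessity the paper takes the minimal solutions $x^{(n)}$ of the truncated equations \Cref{apprxa_eq} (with the mass leaving $\{1,\dots,n\}$ redirected to $H$) and obtains $x^{(n)}_i\uparrow\E_i\sigma_H$ from the abstract approximation theorem (\Cref{mini_apprx_org}); your $y^{(n)}_i=\E_i[\sigma_H\wedge\tau_n]$ is the probabilistic incarnation of the same object, with monotone convergence and non-explosion ($\tau_n\uparrow\infty$ by regularity) replacing the minimal-solution machinery. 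What the paper's route buys is uniformity: the identical lemmas run verbatim for higher moments and, with modifications, for exponential moments, which is why \Cref{in_aerg_con,in_eerg_con} come almost for free. What your route buys is independence from the minimal-solution formalism and an explicit identification of the test functions as truncated expected return times. The only steps you should write out in full are the two standard facts used in passing: $\E_i\tau_n<\infty$ for $i$ in the finite set $F_n$ (irreducibility plus a geometric-trials bound on the number of embedded steps before exiting $F_n$), and the first-jump decomposition of $\sigma_H\wedge\tau_n$ at states of $H$, where the return-time convention $\sigma_H\geqslant\eta_1$ is what makes the identity hold there as well.
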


        \begin{theorem}\label{in_serg_con}
            Let $Q$ be an irreducible regular $Q$-matrix and $H$ a non-empty finite subset of $E$. Then the $Q$-process is non-strongly ergodic iff there is a sequence $\{y^{(n)}\}^{\infty}_{n=1}$, where $y^{(n)}=\bigl(y^{(n)}_i\bigr)_{i\notin H}$ for each $n\geqslant 1$, and $\{y^{(n)}\}^{\infty}_{n=1}$ satisfies the following conditions:
                \begin{enumerate}[\upshape (1)]
                    \item for each $n\geqslant 1$, $\bigl(y^{(n)}_i\bigr)_{i\notin H}$ satisfies $\sup_{i\notin H} y^{(n)}_i<\infty$ and solves inequality
                        \begin{equation}\label{in_serg_con_eq}
                            y_i\leqslant\sum_{j\notin H}\Pi_{ij}y_j+\frac{1}{q_i},\qquad i\notin H;
                        \end{equation}
                    \item $\sup_{n\geqslant 1}\sup_{i\notin H} y^{(n)}_i =\infty$ (or equivalently, $\varlimsup_{n\to \infty} \sup_{i\notin H} y^{(n)}_i =\infty$).
                \end{enumerate}
        \end{theorem}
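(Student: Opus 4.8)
The proof reduces everything to the probabilistic description recalled in the introduction, that the $Q$-process is strongly ergodic iff $\bigl(\E_i\sigma_H\bigr)_{i\notin H}$ is bounded, together with the observation that \eqref{in_serg_con_eq} is exactly the \emph{sub}-solution form of the first-entrance relation $\E_i\sigma_H=q_i^{-1}+\sum_{j\notin H}\Pi_{ij}\E_j\sigma_H$ ($i\notin H$), obtained by conditioning on the first jump and using $\sigma_H=\tau_H$ for $i\notin H$ and $\E_j\tau_H=0$ for $j\in H$. The crux is the following comparison estimate: \emph{if $y=(y_i)_{i\notin H}$ has $\sup_{i\notin H}y_i<\infty$ and solves \eqref{in_serg_con_eq}, then $y_i\leqslant\E_i\sigma_H$ for every $i\notin H$.} To prove it, let $(Z_k)_{k\geqslant0}$ be the embedded chain with law $\Pi$ and $\tau\coloneqq\inf\{k\geqslant1\st Z_k\in H\}$; substituting \eqref{in_serg_con_eq} into itself $n$ times and invoking the Markov property of $(Z_k)$ yields
\begin{equation*}
    y_i\;\leqslant\;\E_i\Bigl[\sum\nolimits_{k=0}^{(\tau\wedge n)-1}q_{Z_k}^{-1}\Bigr]+\E_i\bigl[y_{Z_n};\,\tau>n\bigr],\qquad n\geqslant1.
\end{equation*}
By monotone convergence the first term rises to $\E_i\bigl[\sum_{k=0}^{\tau-1}q_{Z_k}^{-1}\bigr]=\E_i\sigma_H$ (the occupation-time decomposition of $\sigma_H$ along the embedded chain), while the second is at most $\bigl(\sup_{j\notin H}y_j\bigr)^{+}\pb_i(\tau>n)\to0$ since the process is recurrent, so $\tau<\infty$ $\pb_i$-a.s.; letting $n\to\infty$ gives $y_i\leqslant\E_i\sigma_H$ (vacuous, hence harmless, when $\E_i\sigma_H=\infty$).

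Given this, the ``if'' direction is immediate: a sequence $\{y^{(n)}\}$ as in (1)--(2) satisfies $y^{(n)}_i\leqslant\E_i\sigma_H$ for all $i\notin H$ and all $n$, whence $\sup_{i\notin H}\E_i\sigma_H\geqslant\sup_{n}\sup_{i\notin H}y^{(n)}_i=\infty$, so $\bigl(\E_i\sigma_H\bigr)_{i\notin H}$ is unbounded and the process is not strongly ergodic. (The two phrasings of (2) agree: for reals $a_n<\infty$, if $\varlimsup_n a_n<\infty$ then $a_n$ is eventually bounded, so only finitely many $a_n$ are large and $\sup_n a_n<\infty$; the converse is trivial.)

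For the ``only if'' direction, assume $\bigl(\E_i\sigma_H\bigr)_{i\notin H}$ is unbounded and set $y^{(n)}_i\coloneqq\E_i\bigl[\sigma_H\wedge n\bigr]$ for $i\notin H$. Then $0\leqslant y^{(n)}_i\leqslant n$, so $\sup_{i\notin H}y^{(n)}_i<\infty$. To check \eqref{in_serg_con_eq}, condition on the first jump: since for $i\notin H$ one has $\sigma_H=\eta_1$ on $\{Z_1\in H\}$ and $\sigma_H=\eta_1+\sigma_H\circ\theta_{\eta_1}$ on $\{Z_1\notin H\}$, the elementary bound $(a+b)\wedge n\leqslant a+(b\wedge n)$ for $a,b\geqslant0$ gives $\sigma_H\wedge n\leqslant\eta_1+\bigl((\sigma_H\circ\theta_{\eta_1})\wedge n\bigr)\mathbf 1_{\{Z_1\notin H\}}$ in both cases; taking $\E_i$ and using the strong Markov property at $\eta_1$ yields $y^{(n)}_i\leqslant q_i^{-1}+\sum_{j\notin H}\Pi_{ij}y^{(n)}_j$, which is \eqref{in_serg_con_eq}. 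Finally $y^{(n)}_i\uparrow\E_i\sigma_H$ by monotone convergence, so unboundedness of $\bigl(\E_i\sigma_H\bigr)_{i\notin H}$ forces $\sup_{n}\sup_{i\notin H}y^{(n)}_i=\infty$, which is (2). This finishes the proof.

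The step I expect to be the main obstacle is the comparison estimate, specifically justifying the limit in the iterated inequality without assuming $y$ bounded below or $\E_i\sigma_H$ finite; the key is that the tail term $\E_i[y_{Z_n};\tau>n]$ is controlled using only the one-sided bound $\sup_{i\notin H}y_i<\infty$ together with recurrence. The remaining ingredients --- the first-entrance relation, the truncation estimate for $\sigma_H\wedge n$, and the reduction to $\sup_{i\notin H}\E_i\sigma_H=\infty$ --- are routine, and the argument runs parallel to that of Theorem~\ref{in_erg_con}, with the index set $H$ appearing there replaced throughout by $E\setminus H$.
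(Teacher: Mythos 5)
Your proof is correct, but it takes a genuinely different route from the paper's on both halves. For sufficiency the paper does not iterate the inequality probabilistically: it first reduces to the case where the process is ergodic (otherwise non-strong ergodicity is automatic), forms the difference $z_i=\E_i\mkern-1.5mu\sigma_0-y_i$ (finite by ergodicity), notes that $z$ is bounded below and superharmonic off $H$ for the embedded chain, and invokes the transience test-function theorem (\Cref{dmc}) to get $z_i\geqslant z_0=0$, i.e.\ $y_i\leqslant\E_i\mkern-1.5mu\sigma_0$ (\Cref{cmprs_a}). Your direct iteration along the embedded chain, killing the tail term $\E_i[y_{Z_n};\tau>n]$ with the one-sided bound and recurrence, proves the same comparison without the reduction to the ergodic case and without the minimal-solution machinery. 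For necessity the paper truncates in space rather than in time: it takes the minimal solutions $x^{(n)}$ of the system restricted to $\{1,\dots,n\}$, which are finite and increase to $\E_i\mkern-1.5mu\sigma_0$ by \Cref{mini_apprx_org} (see \Cref{apprxa}), and extends them by zero; your choice $y^{(n)}_i=\E_i[\sigma_H\wedge n]$ is a time truncation, verified to be a subsolution via the first-jump decomposition and $(a+b)\wedge n\leqslant a+(b\wedge n)$, and increases to $\E_i\mkern-1.5mu\sigma_H$ by monotone convergence. Both constructions are valid; the paper's buys uniformity, since the same finite-approximation lemma drives the ergodic, algebraic and exponential cases (where finite support of the test functions is actually part of the hypothesis), while yours is more elementary and self-contained. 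The one point worth making explicit in your iteration is that $\sum_{j\notin H}\Pi_{ij}y_j$ is automatically $>-\infty$ (it dominates $y_i-1/q_i$), so each tail term $\E_i[y_{Z_n};\tau>n]$ is well defined even though $y$ need not be bounded below.
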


        \begin{rmk}
            Testing sequence in \Cref{in_erg_con,in_serg_con} need not be non-negative. Take \Cref{in_erg_con} for instance. Let $\{y^{(n)}\}^{\infty}_{n=1}$ be a sequence satisfying the conditions in \Cref{in_erg_con}. Then for each $n\geqslant 1$, $y^{(n)}=\bigl(y^{(n)}_i\bigr)_{i\in E}$ is a function on $E$. Here, $y^{(n)}$ is not required to be non-negative. We may even allow $\inf_{i\in E}y^{(n)}_i=-\infty$. However, $y^{(n)}$ should be a finite-valued function. In other words, for each $n\geqslant 1$ and $i\geqslant 1$, $y^{(n)}_i$ is a finite real number.
        \end{rmk}

        The following inverse problem criterion for algebraic ergodicity generalizes \Cref{in_erg_con}.
        \begin{theorem}\label{in_aerg_con}
            Let $Q$ be an irreducible regular $Q$-matrix and $H$ a non-empty finite subset of $E$. Suppose the $Q$-process is $\ell$-ergodic for some non-negative integer $\ell$, then the $Q$-process is not $(\ell+1)$-ergodic iff there is a sequence $\{y^{(n)}\}^{\infty}_{n=1}$, where $y^{(n)}=\bigl(y^{(n)}_i\bigr)_{i\in E}$ for each $n\geqslant 1$, and $\{y^{(n)}\}^{\infty}_{n=1}$ satisfies the following conditions:
                \begin{enumerate}[\upshape (1)]
                    \item for each $n\geqslant 1$, $\bigl(y^{(n)}_i\bigr)_{i\in E}$ satisfies $\sup_{i\in E} y^{(n)}_i<\infty$ and solves inequality
                         \begin{equation}\label{in_aerg_con_eq}
                            y_i\leqslant\sum_{j\notin H}\Pi_{ij}y_j+\frac{(\ell+1)}{q_i}\E_i\mkern-1.5mu\sigma_H^{\ell},\qquad i\in E;
                         \end{equation}
                    \item $\sup_{n\geqslant 1} \max_{i\in H} y^{(n)}_i =\infty$ (or equivalently, $\varlimsup_{n\to \infty} \max_{i\in H} y^{(n)}_i =\infty$).
                \end{enumerate}
        \end{theorem}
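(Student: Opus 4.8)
The plan is to run the proof of \Cref{in_erg_con} with $\sigma_H$ replaced throughout by $\sigma_H^{\ell+1}$; indeed \Cref{in_erg_con} is literally the case $\ell=0$, since $\E_i\sigma_H^0\equiv 1$ turns \eqref{in_aerg_con_eq} into \eqref{in_erg_con_eq}. Everything rests on the hitting-time moment identity
\begin{equation}\label{hitmoment}
    \E_i\sigma_H^{\ell+1}=\sum_{j\notin H}\Pi_{ij}\,\E_j\sigma_H^{\ell+1}+\frac{\ell+1}{q_i}\,\E_i\sigma_H^{\ell},\qquad i\in E,
\end{equation}
which holds in $[0,\infty]$. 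To get it, write $\sigma_H=\eta_1+\tau_H\circ\theta_{\eta_1}$, where $\tau_H$ is the hitting time of $H$ (so under $\pb_j$ it agrees with $\sigma_H$ for $j\notin H$ and vanishes for $j\in H$), expand $(\eta_1+\tau_H\circ\theta_{\eta_1})^{\ell+1}$ by the binomial theorem, and apply the strong Markov property at the first jump $\eta_1$ together with $\E_i\eta_1^m=m!/q_i^m$; separating the top-degree term gives the first sum, while $\binom{\ell+1}{k}(\ell+1-k)!=(\ell+1)\binom{\ell}{k}(\ell-k)!$ identifies the rest as $\tfrac{\ell+1}{q_i}\E_i\sigma_H^{\ell}$. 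Thus \eqref{in_aerg_con_eq} is precisely the ``$\le$'' form of \eqref{hitmoment}. The $\ell$-ergodicity hypothesis enters only to make the source term $c_i:=\tfrac{\ell+1}{q_i}\E_i\sigma_H^{\ell}$ finite and nonnegative for \emph{every} $i\in E$: from $\max_{i\in H}\E_i\sigma_H^{\ell}<\infty$ one deduces $\E_i\sigma_H^{\ell}<\infty$ for $i\notin H$ by decomposing $\tau_H$ over successive returns to $i$, the number of excursions needed to first reach $H$ being geometric and hence having all moments finite.

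The core is a comparison step: \emph{if $M:=\sup_i y_i<\infty$ and $(y_i)_{i\in E}$ solves \eqref{in_aerg_con_eq}, then $y_i\le\E_i\sigma_H^{\ell+1}$ for every $i$.} Iterating \eqref{in_aerg_con_eq} $n$ times along the embedded chain $(Y_k)_{k\ge 0}$ with transition matrix $\Pi$ (put $Y_0=i$ and $\widehat\sigma_H:=\inf\{k\ge 1:Y_k\in H\}$) gives
\begin{equation*}
    y_i\le\E_i\bigl[y_{Y_n};\,\widehat\sigma_H>n\bigr]+\sum_{m=0}^{n-1}\E_i\bigl[c_{Y_m};\,\widehat\sigma_H>m\bigr].
\end{equation*}
Since the process, hence the embedded chain, is recurrent, $(Y_k)$ reaches $H$ almost surely, so $\pb_i(\widehat\sigma_H>n)\to 0$; on $\{\widehat\sigma_H>n\}$ we have $y_{Y_n}\le M$, whence the first term is $\le M\,\pb_i(\widehat\sigma_H>n)\to 0$. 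Iterating \eqref{hitmoment} the same number of times writes $\E_i\sigma_H^{\ell+1}$ as $\E_i\bigl[\E_{Y_n}\sigma_H^{\ell+1};\widehat\sigma_H>n\bigr]$ plus the very same second sum, so that second sum is $\le\E_i\sigma_H^{\ell+1}$, the omitted remainder being nonnegative (the case $\E_i\sigma_H^{\ell+1}=\infty$ is trivial). Letting $n\to\infty$ proves the claim, and the ``if'' direction is then immediate: a sequence $\{y^{(n)}\}$ satisfying conditions (1) and (2) has $\sup_n\max_{i\in H}y^{(n)}_i\le\max_{i\in H}\E_i\sigma_H^{\ell+1}$, so (2) forces $\max_{i\in H}\E_i\sigma_H^{\ell+1}=\infty$, i.e.\ non-$(\ell+1)$-ergodicity.

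For the ``only if'' direction, suppose the process is not $(\ell+1)$-ergodic, so $\max_{i\in H}\E_i\sigma_H^{\ell+1}=\infty$, and take $y^{(n)}_i:=\E_i\bigl[(\sigma_H\wedge n)^{\ell+1}\bigr]$. Then $\sup_i y^{(n)}_i\le n^{\ell+1}<\infty$, and by monotone convergence $\max_{i\in H}y^{(n)}_i\uparrow\max_{i\in H}\E_i\sigma_H^{\ell+1}=\infty$, giving condition (2); it remains to see $y^{(n)}$ solves \eqref{in_aerg_con_eq}. From $(a+b)\wedge n\le a+(b\wedge n)$ for $a,b\ge 0$ we get $(\sigma_H\wedge n)^{\ell+1}\le(\eta_1+(\tau_H\circ\theta_{\eta_1})\wedge n)^{\ell+1}$; now rerun the binomial-plus-strong-Markov computation behind \eqref{hitmoment} with every power $\tau_H^{k}$ replaced by $(\tau_H\wedge n)^{k}\le\tau_H^{k}$. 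The top-degree term again reproduces $\sum_{j\notin H}\Pi_{ij}y^{(n)}_j$, and the lower-order terms assemble into $\tfrac{\ell+1}{q_i}$ times the corresponding expansion, with exponent $\ell$ and truncation, of $\E_i\sigma_H^{\ell}$; since dropping the truncations only increases each nonnegative term, that part is $\le\tfrac{\ell+1}{q_i}\E_i\sigma_H^{\ell}=c_i$, yielding \eqref{in_aerg_con_eq}. This last piece of bookkeeping --- matching the truncated lower-order terms against $\E_i\sigma_H^{\ell}$, which is also where the finiteness of $\E_i\sigma_H^{\ell}$ supplied by $\ell$-ergodicity is used --- is the only spot I expect to require genuine care; the remainder of the argument is parallel to the proof of \Cref{in_erg_con}.
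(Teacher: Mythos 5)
Your proof is correct, but both halves run along genuinely different lines from the paper's, so a comparison is worthwhile. For sufficiency, the paper does not iterate the inequality itself: it cites \Cref{con_mini_alge} (the moments are the \emph{minimal} solutions of the recursive system) and proves the comparison $y_i\leqslant\E_i\sigma_H^{\ell+1}$ in \Cref{cmprs_a} by setting $z_i=\E_i\sigma_H^{\ell+1}-y_i$ --- which forces it to first \emph{assume} $(\ell+1)$-ergodicity so that $z$ is finite --- and then applying the transience criterion \Cref{dmc} to the recurrent embedded chain; your direct $n$-step iteration along $(Y_k)$, with recurrence killing the boundary term $\E_i\bigl[y_{Y_n};\widehat\sigma_H>n\bigr]\leqslant M\,\pb_i[\widehat\sigma_H>n]$, proves the same comparison unconditionally and avoids both \Cref{dmc} and the subtraction. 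For necessity, the paper truncates in \emph{space}: it takes the minimal nonnegative solutions $x^{(n)}$ of the system restricted to $\{1,\dots,n\}$, extends them by zero, and obtains $x^{(n)}_i\uparrow\E_i\sigma_H^{\ell+1}$ from the monotone approximation theorem \Cref{mini_apprx_org}, with finiteness of the truncated solutions needing a separate argument (\Cref{apprxa}); you truncate in \emph{time}, taking $y^{(n)}_i=\E_i\bigl[(\sigma_H\wedge n)^{\ell+1}\bigr]$, for which boundedness is free ($\leqslant n^{\ell+1}$) and convergence is monotone convergence, at the price of the binomial bookkeeping verifying these are subsolutions. Your route is more self-contained and probabilistic; the paper's buys uniformity with the rest of its machinery (the same spatial truncation scheme is reused for \Cref{in_eerg_con}) and outsources the first-jump moment expansion to \cite{mao2004}. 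The only spots in your write-up that deserve an explicit sentence are (i) the well-definedness of $\sum_{j\notin H}\Pi_{ij}y_j$ when $y$ is unbounded below, which the paper's remark permits (harmless, since $y\leqslant M$ makes the positive part summable), and (ii) the deduction that $\ell$-ergodicity gives $\E_i\sigma_H^{\ell}<\infty$ for \emph{all} $i$, not just $i\in H$, which you correctly flag and sketch.
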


        \Cref{in_eerg_con} is a non-exponential ergodicity criterion for $Q$-processes.
        \begin{theorem}\label{in_eerg_con}
            Let $Q$ be an irreducible regular $Q$-matrix with $\inf_{i\in E} q_i>0$ and $H$ a non-empty finite subset of $E$. Then the $Q$-process is non-exponentially ergodic iff there is a sequence of positive numbers $\{\lambda_n\}_{n=1}^{\infty}$ and a sequence of functions $\{y^{(n)}\}^{\infty}_{n=1}$ on $E$  satisfying the following conditions:
                \begin{enumerate}[\upshape (1)]
                    \item $\lim_{n\to \infty}\lambda_n=0$;
                    \item for each $n\geqslant 1$, $\bigl(y^{(n)}_i\bigr)_{i\in E}$ is finitely supported and solves inequality
                         \begin{equation}\label{in_eerg_con_eq}
                            y_i^{(n)}\leqslant \frac{q_i}{q_i-\lambda_n}\sum_{j\notin H}\Pi_{ij}y^{(n)}_j+\frac{1}{q_i-\lambda_n},\qquad i\in E;
                         \end{equation}
                    \item $\sup_{n\geqslant 1} \max_{i\in H} y^{(n)}_i =\infty$ (or equivalently, $\varlimsup_{n\to \infty} \max_{i\in H} y^{(n)}_i =\infty$).
                \end{enumerate}
        \end{theorem}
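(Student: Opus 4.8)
The plan is to recognise the right‑hand side of \eqref{in_eerg_con_eq} (with the generic parameter $\lambda$ in place of $\lambda_n$) as the one‑step recursion satisfied by
\[
    v^{(\lambda)}_i \coloneqq \E_i\!\int_0^{\sigma_H}\ue^{\lambda t}\df t ,\qquad i\in E,\quad 0<\lambda<\inf_{j} q_{j} ,
\]
and then to carry out, with the auxiliary parameter $\lambda$ driven to $0$, the familiar pattern of truncation (for necessity) and comparison with the minimal solution (for sufficiency). First I would run a first‑jump decomposition: splitting $\int_0^{\sigma_H}\ue^{\lambda t}\df t$ at the first jump time $\eta_1$ and using that $\eta_1\sim\mathrm{Exp}(q_i)$ is independent of $X_{\eta_1}\sim\Pi_{i\cdot}$ (so $\E_i\ue^{\lambda\eta_1}=q_i/(q_i-\lambda)$ and $\E_i\!\int_0^{\eta_1}\ue^{\lambda t}\df t=1/(q_i-\lambda)$ for $\lambda<q_i$), one finds that $v^{(\lambda)}$ solves \eqref{in_eerg_con_eq} with equality for every $i\in E$ (this covers $i\in H$ as well, since $\sigma_H\ge\eta_1$ there). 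Moreover $\E_i\ue^{\lambda\sigma_H}=1+\lambda v^{(\lambda)}_i$ by monotone convergence, true even if both sides are $+\infty$; so by the probabilistic description~(4) of exponential ergodicity, and because $\inf_j q_j>0$ makes all sufficiently small $\lambda>0$ admissible ($\lambda<q_i$ for every $i$), the $Q$‑process is non‑exponentially ergodic if and only if $\max_{i\in H}v^{(\lambda)}_i=\infty$ for every $\lambda\in(0,\inf_j q_j)$.

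For necessity, fix $\lambda_n\downarrow 0$ with $\lambda_n<\inf_j q_j$, and an exhaustion $H\subseteq F_1\subseteq F_2\subseteq\cdots$ of $E$ by finite sets; writing $\tau_{F_m}$ for the exit time of $F_m$, put $w^{(n,m)}_i\coloneqq\E_i\!\int_0^{\sigma_H\wedge\tau_{F_m}}\ue^{\lambda_n t}\df t$. The same first‑jump decomposition shows that $w^{(n,m)}$ solves \eqref{in_eerg_con_eq} with equality on $F_m$, while for $i\notin F_m$ one has $w^{(n,m)}_i=0$, which trivially satisfies \eqref{in_eerg_con_eq} there since the right‑hand side is at least $1/(q_i-\lambda_n)>0$; hence each $w^{(n,m)}$ is a finitely supported subsolution. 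Since regularity of $Q$ gives $\tau_{F_m}\uparrow\infty$ a.s., monotone convergence gives $w^{(n,m)}_i\uparrow v^{(\lambda_n)}_i$ as $m\to\infty$. Non‑exponential ergodicity provides, for each $n$, some $i_n\in H$ with $v^{(\lambda_n)}_{i_n}=\infty$; choosing $m(n)$ with $w^{(n,m(n))}_{i_n}\ge n$ and setting $y^{(n)}\coloneqq w^{(n,m(n))}$ produces a sequence meeting (1)–(3).

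For sufficiency I would reduce everything to the comparison estimate: \emph{for $0<\lambda<\inf_j q_j$, every finitely supported solution $y$ of \eqref{in_eerg_con_eq} obeys $y_i\le v^{(\lambda)}_i$ for all $i$.} Granting it, suppose $\{\lambda_n\},\{y^{(n)}\}$ satisfy (1)–(3) while the process is exponentially ergodic, and fix an admissible $\lambda_0$ with $C_0\coloneqq\max_{i\in H}v^{(\lambda_0)}_i<\infty$. By (3) the finite numbers $\max_{i\in H}y^{(n)}_i$ are unbounded in $n$, while $\lambda_n\to 0$; so there is an $n$ with $\lambda_n<\lambda_0$ (hence $\lambda_n<q_i$ for all $i$) and $\max_{i\in H}y^{(n)}_i>C_0$ — but the estimate and monotonicity of $\lambda\mapsto v^{(\lambda)}_i$ give $\max_{i\in H}y^{(n)}_i\le\max_{i\in H}v^{(\lambda_n)}_i\le C_0$, a contradiction. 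To prove the estimate, write \eqref{in_eerg_con_eq} as $y\le Ky+b$ with $(Ku)_i\coloneqq\frac{q_i}{q_i-\lambda}\sum_{j\notin H}\Pi_{ij}u_j\ge 0$ and $b_i\coloneqq 1/(q_i-\lambda)\ge 0$; since $\sup_j q_j/(q_j-\lambda)<\infty$, iterating the monotone positive operator $K$ gives $y\le K^m y+\sum_{k=0}^{m-1}K^k b$ for all $m$, with every iterate bounded. A short induction with the Markov property identifies $(K^k y)_i=\E_i\bigl[\ue^{\lambda\eta_k}y_{X_{\eta_k}}\mathbf{1}_{A_k}\bigr]$, where $\eta_k$ is the $k$‑th jump time and $A_k=\{X_{\eta_1},\dots,X_{\eta_k}\notin H\}$, and $\sum_{k=0}^{m-1}(K^k b)_i=\E_i\!\int_0^{\sigma_H\wedge\eta_m}\ue^{\lambda t}\df t\uparrow v^{(\lambda)}_i$. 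If $v^{(\lambda)}_i=\infty$ there is nothing to prove; if $v^{(\lambda)}_i<\infty$, then $\E_i\ue^{\lambda\sigma_H}<\infty$, and since $\sigma_H\ge\eta_{k+1}>\eta_k$ on $A_k$ we get $\abs{(K^m y)_i}\le\norm{y}_\infty\,\E_i\bigl[\ue^{\lambda\sigma_H}\mathbf{1}_{A_m}\bigr]\to 0$ by dominated convergence, because $A_m\downarrow\{\sigma_H=\infty\}$ and $\sigma_H<\infty$ a.s.\ by recurrence; letting $m\to\infty$ yields $y_i\le v^{(\lambda)}_i$.

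The step I expect to be the main obstacle is exactly this last limit, $(K^m y)_i\to 0$: the operator $K$ is not sub‑Markov — its row sums $\frac{q_i}{q_i-\lambda}\sum_{j\notin H}\Pi_{ij}$ may exceed $1$ — so no naive maximum principle applies, and one must instead exploit that on the not‑yet‑absorbed event $A_m$ the elapsed time $\eta_m$ is dominated by $\sigma_H$, which furnishes the integrable envelope $\ue^{\lambda\sigma_H}$ precisely in the only case that actually needs an argument ($v^{(\lambda)}_i<\infty$). The hypothesis $\inf_j q_j>0$ is used throughout: to make small $\lambda$ admissible, to bound $\sup_j q_j/(q_j-\lambda)$ so the iterates $K^k y$ stay finite, and to ensure that any prescribed sequence $\lambda_n\to 0$ is eventually $<q_i$ for all $i$, which is what lets the comparison estimate be invoked in the sufficiency part.
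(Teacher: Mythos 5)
Your sufficiency argument is sound: the identification of the right-hand side with the first-jump decomposition of $v^{(\lambda)}_i=\E_i\!\int_0^{\sigma_H}\ue^{\lambda t}\df{t}=e_{iH}(\lambda)$, the iteration $y\leqslant K^my+\sum_{k<m}K^kb$, and the control of $(K^my)_i$ by the envelope $\ue^{\lambda\sigma_H}\mathbf{1}_{A_m}$ together give exactly the comparison $y\leqslant v^{(\lambda)}$ that the paper obtains through \Cref{mini_ctrl_in} and \Cref{mini_apprx_org}; the step you flag as the main obstacle is in fact fine. The genuine gap is in your necessity argument, in the step ``choose $m(n)$ with $w^{(n,m(n))}_{i_n}\geqslant n$.'' For a \emph{prescribed} $\lambda_n>0$, the truncated functionals $w^{(n,m)}_i=\E_i\!\int_0^{\sigma_H\wedge\tau_{F_m}}\ue^{\lambda_n t}\df{t}$ need not be finite once $m$ is large: $\E_i\ue^{\lambda_n(\sigma_H\wedge\tau_{F_m})}$ is finite only when $\lambda_n$ is below the decay parameter of the chain killed on leaving $F_m\setminus H$, and that parameter tends to $0$ as $F_m\uparrow E$ precisely in the non-exponentially-ergodic situation. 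So the sequence $m\mapsto w^{(n,m)}_{i_n}$ can pass from moderate finite values directly to $+\infty$ without ever taking a finite value $\geqslant n$ (take the rate-one continuized simple random walk with $H=\{0\}$, $F_m=\{0,\dots,m\}$ and $\lambda_n=0.1$: the killed walk on $\{1,\dots,m\}$ has gap of order $m^{-2}$, so $w^{(n,m)}$ is $+\infty$ for all $m$ beyond a small threshold, while the finitely many finite values are bounded). Your candidate $y^{(n)}$ may therefore fail to be a finite-valued function, and condition (2) cannot be met.

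This is exactly the difficulty the paper's \Cref{in_eerg_prep} and \Cref{func} are built to handle, and it forces one \emph{not} to fix $\lambda_n$ in advance but to tune it jointly with the truncation level: for each target $n$ one first picks $N_n$ with $x_0^{(\lambda',N_n)}\geqslant n$, then uses that $\lambda\mapsto x_0^{(\lambda,N)}$ is a continuous extended-real-valued function on $(0,\lambda']$ with $\lim_{\lambda\downarrow0}x_0^{(\lambda,N)}\leqslant\E_0\mkern-1.5mu\sigma_0<\infty$, so the intermediate value theorem yields $\lambda(n,N)$ with $x_0^{(\lambda(n,N),N)}=n$ \emph{exactly} (hence finite); finally one shows $\inf_{N\geqslant N_n}\lambda(n,N)=0$ (else $e_{00}(c)\leqslant n$ for some $c>0$, contradicting non-exponential ergodicity), which is what makes $\lambda_n\to0$ achievable. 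To repair your proof you would need to import this continuity-and-intermediate-value step, or some equivalent device that guarantees simultaneously finiteness of the truncated subsolution, a large value at some state of $H$, and $\lambda_n\to0$.
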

        Although we need a sequence of testing functions in applications of above results, we can actually manufacture testing functions in batch. For example, one may consult the following interesting example and its proof in \Cref{sec_catastr}.
            \begin{exmp}
                Let $Q=(q_{ij})$ be a conservative $Q$-matrix on $E=\Z_+=\{0,1,2,\ldots\}$ with
                    \begin{numcases}
                        {q_{ij}=}
                            i+1,\quad \nonumber &if\/ $i\geqslant0,\enspace j=i+1$,\\
                            \alpha_i\geqslant0, &if\/ $i\geqslant1,\enspace j=0$,\nonumber\\
                            0, & other $i\neq j$\nonumber.
                    \end{numcases}
                Assume there are infinitely many non-zero $\alpha_i$, so $Q$ is irreducible. Then, the $Q$-process is non-exponentially ergodic if $ \lim_{i\to\infty}\alpha_i=0$.
    		\end{exmp}
        \par
        Brussel's model (see \cite{yanchen1986}) is a typical model of reaction-diffusion process with several species. Finite-dimensional Brussel's model is exponentially ergodic (cf.\@ \cite{chenjw1995}). In \Cref{chp_app}, we will demonstrate that it is non-strongly ergodic using \Cref{in_serg_con}, which was actually proved for the first time in \cite{wu2007} by comparison method.
        Comparison method works for Brussel's model but it is no longer available for more involved models like the following one. However, we can still deal with it using our drift criteria developed in this paper, see \Cref{chp_app} for further details.
        \begin{exmp}
             Let $S$ be a finite set, $E = (\Z_+)^S$ and $p(u, v)$ a transition probability matrix on $S$. We denote by $\theta \in E$ whose components are identically 0 and denote by $e_u \in E$ the unit vector whose component at site $u \in S$ is equal to 1 and other components at $v \neq u $ all equal 0. Define an irreducible $Q$-matrix $Q=\bigl(q(x,y)\st x,y\in E\bigr)$ as follows:
                \begin{numcases}
                    {q(x, y)=}
                    x(u)^{\gamma}, \nonumber &if\/ $y=x+e_u$,\enspace $x\neq\theta$, \\
                    1, \nonumber &if\/ $x=\theta$,\enspace$y=e_u$, \\
                    x(u)^{\gamma}, \nonumber &if\/ $y=x-e_u$, \\
                    x(u)p(u,v), \quad\nonumber &if\/ $y=x-e_u+e_v$,\enspace $v\neq u$,\\
                    0, & other $y\neq x$\nonumber,
                \end{numcases}
            and $q(x) = -q(x,x)=\sum_{y\neq x}q(x,y)$, where $x = \bigl(x(u)\st u\in S\bigr) \in E$. In \Cref{chp_app}, we will prove the following results:
                \begin{enumerate}[(1)]
                    \item when $\gamma\leqslant 2$, the $Q$-process is non-strongly ergodic;
                    \item when $\gamma\leqslant 1$, the $Q$-process is non-ergodic.
                \end{enumerate}
        \end{exmp}

        As for discrete time chains, we also have the following parallel criteria.
        \renewcommand{\thetheorem}{\ref{in_erg_con}$^\prime$}
        \addtocounter{theorem}{-1}
        \begin{theorem}\label{in_erg_dis}
            Let $P=(P_{ij})$ be an irreducible aperiodic transition matrix and $H$ a non-empty finite subset of $E$. Then the chain is non-ergodic iff there is a sequence $\{y^{(n)}\}^{\infty}_{n=1}$, where $y^{(n)}=\bigl(y^{(n)}_i\bigr)_{i\in E}$ for each $n\geqslant 1$, and $\{y^{(n)}\}^{\infty}_{n=1}$ satisfies the following conditions:
                \begin{enumerate}[\upshape (1)]
                    \item for each $n\geqslant 1$, $\bigl(y^{(n)}_i\bigr)_{i\in E}$ satisfies $\sup_{i\in E} y^{(n)}_i<\infty$ and solves inequality
                            \renewcommand{\theequation}{\ref{in_erg_con_eq}$^\prime$}
                            \addtocounter{equation}{-1}
                         \begin{equation}\label{in_erg_dis_eq}
                            y_i \leqslant \sum_{j\notin H}P_{ij}y_j+1,\qquad i\in E;
                         \end{equation}
                            \renewcommand{\theequation}{\arabic{equation}}
                    \item $\sup_{n\geqslant 1} \max_{i\in H} y^{(n)}_i =\infty$ (or equivalently, $\varlimsup_{n\to \infty} \max_{i\in H} y^{(n)}_i =\infty$).
                \end{enumerate}
         \end{theorem}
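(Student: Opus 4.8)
The plan is to run the discrete-time counterpart of the proof of \Cref{in_erg_con}. Write $(X_n)_{n\geqslant 0}$ for the chain and $\sigma_H\coloneqq\inf\{n\geqslant 1\st X_n\in H\}$, and recall the standard characterisation (the discrete analogue of the probabilistic descriptions in \Cref{chp_intro}): an irreducible aperiodic chain is ergodic iff $\max_{i\in H}\E_i\sigma_H<\infty$ for some (equivalently, any) non-empty finite $H\subseteq E$, so that non-ergodicity means $\E_{i_0}\sigma_H=\infty$ for some $i_0\in H$. The workhorse is the one-step identity (condition on $X_1$, using $\sigma_H\geqslant 1$)
\begin{equation*}
  \E_i[\sigma_H\wedge m]=1+\sum_{j\notin H}P_{ij}\,\E_j[\sigma_H\wedge(m-1)],\qquad i\in E,\ m\geqslant 1,
\end{equation*}
with $\sigma_H\wedge 0\coloneqq 0$; letting $m\to\infty$ by monotone convergence, $g_i\coloneqq\E_i\sigma_H$ satisfies $g_i=1+\sum_{j\notin H}P_{ij}g_j$ for every $i\in E$, i.e.\ \eqref{in_erg_dis_eq} with equality (valued in $[1,\infty]$).

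\textbf{Necessity.} Assume the chain is non-ergodic and put $y^{(n)}_i\coloneqq\E_i[\sigma_H\wedge n]$ for $i\in E$, $n\geqslant 1$. Each $y^{(n)}$ is finite-valued with $\sup_{i\in E}y^{(n)}_i\leqslant n<\infty$, and since $0\leqslant\E_j[\sigma_H\wedge(n-1)]\leqslant y^{(n)}_j$, the one-step identity gives $y^{(n)}_i\leqslant 1+\sum_{j\notin H}P_{ij}y^{(n)}_j$; thus $y^{(n)}$ solves \eqref{in_erg_dis_eq}, which is condition~(1). Choosing $i_0\in H$ with $\E_{i_0}\sigma_H=\infty$, we get $\max_{i\in H}y^{(n)}_i\geqslant\E_{i_0}[\sigma_H\wedge n]\uparrow\E_{i_0}\sigma_H=\infty$, which is condition~(2).

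\textbf{Sufficiency.} Suppose a sequence $\{y^{(n)}\}$ as described exists and, arguing by contradiction, assume the chain is ergodic. Then $g_i<\infty$ for all $i\in E$ (positive recurrence and irreducibility) and $M\coloneqq\max_{i\in H}g_i<\infty$. Fix $n$, set $c\coloneqq\sup_{i\in E}y^{(n)}_i<\infty$ and $z_i\coloneqq g_i-y^{(n)}_i$; then $z_i\geqslant g_i-c\geqslant 1-c$, so $z\geqslant -b$ for $b\coloneqq\max\{c-1,0\}\geqslant 0$. Since $y^{(n)}_i$ is a finite real number, $\sum_{j\notin H}P_{ij}y^{(n)}_j$ is a genuine real number; subtracting \eqref{in_erg_dis_eq} from the identity for $g$ yields $z_i\geqslant\sum_{j\notin H}P_{ij}z_j$ for all $i\in E$. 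With $\widehat P_{ij}\coloneqq P_{ij}\mathbf 1_{\{j\notin H\}}$ the $H$-killed sub-stochastic kernel, this reads $z\geqslant\widehat P z$; iterating (legitimate because $z$ is bounded below and $\widehat P$ preserves the pointwise order between functions bounded below) gives, for every $k\geqslant 1$,
\begin{equation*}
  z_i\;\geqslant\;\bigl(\widehat P^{\,k}z\bigr)_i\;=\;\E_i\bigl[z_{X_k}\,;\,\sigma_H>k\bigr]\;\geqslant\;-b\,\pb_i(\sigma_H>k).
\end{equation*}
Since the chain is recurrent, $\pb_i(\sigma_H>k)\to 0$ as $k\to\infty$, hence $z_i\geqslant 0$, i.e.\ $y^{(n)}_i\leqslant g_i$ for every $i$. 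In particular $\max_{i\in H}y^{(n)}_i\leqslant M$ for all $n$, contradicting condition~(2); therefore the chain is non-ergodic.

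\textbf{Expected main obstacle.} The delicate part is the measure-theoretic bookkeeping in the sufficiency step, because the test functions may be unbounded below: one must check that $\sum_{j\notin H}P_{ij}y^{(n)}_j$ is well defined, that $z\geqslant\widehat P z$ survives iteration, and the representation $(\widehat P^{\,k}z)_i=\E_i[z_{X_k};\sigma_H>k]$; the final passage $\E_i[z_{X_k};\sigma_H>k]\to 0$ uses only recurrence, the standing assumption of the paper. Everything else is the verbatim discrete translation of the proof of \Cref{in_erg_con}, with each mean holding time $1/q_i$ replaced by $1$ and the embedded kernel $\Pi$ replaced by $P$; the primed versions of \Cref{in_serg_con,in_aerg_con,in_eerg_con} are obtained in the same way.
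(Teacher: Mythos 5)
Your proof is correct, but it departs from the paper's route in one substantive way. The paper proves only the continuous-time versions and declares the discrete case analogous; its argument for sufficiency is your argument in disguise (it sets $z=g-y$ with $g_i=\E_i\sigma_H$ and invokes the transience criterion \Cref{dmc} for the recurrent chain to conclude $z\geqslant 0$, which is exactly what your iteration of the killed kernel $\widehat P$ establishes by hand, cf.\ \Cref{cmprs_a}), so there you have merely unpacked a black box --- and your care about test functions unbounded below is warranted, since the paper's remark explicitly allows $\inf_i y^{(n)}_i=-\infty$. The genuine divergence is in necessity: the paper truncates in \emph{space}, taking $y^{(n)}$ to be the minimal solution of the system restricted to $\{1,\dots,n\}$ (extended by $0$ outside) and using the minimal-solution approximation theorem (\Cref{mini_apprx_org}, \Cref{apprxa}) to get $y^{(n)}_i\uparrow\E_i\sigma_H=\infty$; you truncate in \emph{time}, taking $y^{(n)}_i=\E_i[\sigma_H\wedge n]$, which satisfies the inequality by the one-step decomposition and blows up at some $i_0\in H$ by monotone convergence. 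Your construction is more elementary --- it bypasses the minimal nonnegative solution machinery of \Cref{con_mini_alge} entirely and is self-contained --- while the paper's buys finitely supported test functions (useful in the applications of Sections 3--4, where the explicit truncated solutions are what get computed) and a template that extends uniformly to the algebraic and exponential cases, where the time-truncated moments no longer satisfy such a clean recursion. Both arguments are sound; yours is a legitimate and arguably cleaner proof of this particular theorem.
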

        \renewcommand{\thetheorem}{\arabic{theorem}}

        \renewcommand{\thetheorem}{\ref{in_serg_con}$^\prime$}
        \addtocounter{theorem}{-1}
        \begin{theorem}\label{in_serg_dis}
            Let $P=(P_{ij})$ be an irreducible aperiodic transition matrix and $H$ a non-empty finite subset of $E$. Then the chain is non-strongly ergodic iff there is
            $\{y^{(n)}\}^{\infty}_{n=1}$, where $y^{(n)}=\bigl(y^{(n)}_i\bigr)_{i\notin H}$ for each $n\geqslant 1$, and $\{y^{(n)}\}^{\infty}_{n=1}$ satisfies
            the following conditions:
                \begin{enumerate}[\upshape (1)]
                    \item for each $n\geqslant 1$, $\bigl(y^{(n)}_i\bigr)_{i\notin H}$ satisfies $\sup_{i\notin H} y^{(n)}_i<\infty$ and solves inequality
                            \renewcommand{\theequation}{\ref{in_serg_con_eq}$^\prime$}
                            \addtocounter{equation}{-1}
                        \begin{equation}\label{in_serg_dis_eq}
                            y_i \leqslant \sum_{j\notin H}P_{ij}y_j+1,\qquad i\notin H;
                        \end{equation}
                            \renewcommand{\theequation}{\arabic{equation}}
                    \item $\sup_{n\geqslant 1}\sup_{i\notin H} y^{(n)}_i =\infty$ (or equivalently, $\varlimsup_{n\to \infty} \sup_{i\notin H} y^{(n)}_i =\infty$).
                \end{enumerate}
         \end{theorem}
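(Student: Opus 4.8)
The plan is to reduce the whole statement to the probabilistic description of strong ergodicity. Write $\sigma_H=\inf\{n\geqslant 1\st X_n\in H\}$ and, for $i\notin H$, put $m_i\coloneqq\E_i\sigma_H\in[0,\infty]$; then the chain fails to be strongly ergodic precisely when $\sup_{i\notin H}m_i=\infty$, the family $(m_i)_{i\notin H}$ is the minimal $[0,\infty]$-valued solution of $y_i=\sum_{j\notin H}P_{ij}y_j+1$ $(i\notin H)$, and $m_i=\sum_{k\geqslant 0}\pb_i[\sigma_H>k]$. I will also use the taboo probabilities ${}_HP^r_{ij}=\pb_i[\sigma_H>r,\,X_r=j]$ for $j\notin H$, which satisfy $\sum_{j\notin H}{}_HP^r_{ij}=\pb_i[\sigma_H>r]$ and $\sum_{j\notin H}{}_HP^r_{ij}P_{jk}={}_HP^{r+1}_{ik}$ whenever $k\notin H$.

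For the ``if'' direction the heart of the matter is a comparison estimate that I would prove first: every finite-valued $(y_i)_{i\notin H}$ with $\sup_{i\notin H}y_i<\infty$ solving \eqref{in_serg_dis_eq} satisfies $y_i\leqslant m_i$ for all $i\notin H$. To see this I would feed \eqref{in_serg_dis_eq} into itself; an induction on $r$ gives
\[
  y_i\ \leqslant\ \sum_{k=0}^{r-1}\pb_i[\sigma_H>k]\ +\ \sum_{j\notin H}{}_HP^r_{ij}\,y_j,\qquad i\notin H,
\]
where each step uses \eqref{in_serg_dis_eq} only at states outside $H$ together with the taboo identity above. Letting $r\to\infty$, the first sum increases to $m_i$, while $\sum_{j\notin H}{}_HP^r_{ij}y_j\leqslant\bigl(\sup_{j\notin H}y_j\bigr)^{+}\pb_i[\sigma_H>r]\to 0$ by recurrence (which forces $\pb_i[\sigma_H=\infty]=0$); hence $y_i\leqslant m_i$. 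Applying this to a sequence $\{y^{(n)}\}$ as in the statement, condition (1) gives $\sup_{i\notin H}y^{(n)}_i\leqslant\sup_{i\notin H}m_i$ for every $n$, so condition (2) forces $\sup_{i\notin H}m_i=\infty$; that is, the chain is non-strongly ergodic.

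For the ``only if'' direction I would argue by an explicit construction. Assume $\sup_{i\notin H}m_i=\infty$; fix finite sets $H\subseteq E_1\subseteq E_2\subseteq\cdots$ with $\bigcup_n E_n=E$, put $H_n\coloneqq H\cup(E\setminus E_n)$, and define $y^{(n)}_i\coloneqq\E_i\sigma_{H_n}$ for $i\in E_n\setminus H$ and $y^{(n)}_i\coloneqq 0$ for $i\in E\setminus E_n$. Three things then need checking. \emph{Boundedness:} from the finite set $E_n\setminus H$ the chain exits within $N$ steps with probability at least some $\delta>0$ uniformly in the starting state, so $\E_i\sigma_{H_n}\leqslant N/\delta<\infty$ there, whence $\sup_{i\notin H}y^{(n)}_i<\infty$. \emph{The inequality:} a first-step decomposition of $\sigma_{H_n}$ gives, for $i\in E_n\setminus H$, the equality $y^{(n)}_i=1+\sum_{j\in E_n\setminus H}P_{ij}y^{(n)}_j=1+\sum_{j\notin H}P_{ij}y^{(n)}_j$ (the terms with $j\notin E_n$ dropping out because $y^{(n)}$ vanishes there), and for $i\in E\setminus E_n$ the trivial bound $y^{(n)}_i=0\leqslant 1+\sum_{j\notin H}P_{ij}y^{(n)}_j$ since $y^{(n)}\geqslant 0$; so (1) holds. \emph{Unboundedness of the family:} $E_{n+1}\supseteq E_n$ forces $H_{n+1}\subseteq H_n$, so $\sigma_{H_n}$ is nondecreasing in $n$, and on $\{\sigma_H=t<\infty\}$ the states $X_1,\dots,X_{t-1}$ lie in a finite set, hence in $E_n$ and so outside $H_n$ for all large $n$, whence $\sigma_{H_n}=t$ eventually; thus $\sigma_{H_n}\uparrow\sigma_H$ $\pb_i$-a.s.\ and monotone convergence yields $y^{(n)}_i\uparrow m_i$ for every fixed $i\notin H$. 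Consequently $\sup_n\sup_{i\notin H}y^{(n)}_i=\sup_{i\notin H}m_i=\infty$, which is (2).

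I expect the main obstacle to lie in the ``only if'' direction: one must reconcile two competing requirements---each $y^{(n)}$ bounded above (handled by the uniform exit-time bound from a finite set) while the family as a whole is unbounded (handled by the monotone convergence $\sigma_{H_n}\uparrow\sigma_H$, which is where recurrence is genuinely used, ensuring $\sigma_H<\infty$ a.s.)---keeping in mind that each individual value $y^{(n)}_i$ stays finite even when $m_i=\infty$. On the ``if'' side the only delicate point is to observe that \eqref{in_serg_dis_eq} with a finite-valued $y$ bounded above automatically makes each $\sum_{j\notin H}P_{ij}y_j$ a finite real number, which is what legitimizes the iteration.
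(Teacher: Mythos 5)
Your proposal is correct, and its overall architecture matches the paper's (which proves the continuous-time analogue, Theorem \ref{in_serg_con}, and declares the discrete case similar): a comparison bound $y_i\leqslant\E_i\sigma_H$ for sufficiency, and a finite-truncation construction for necessity. The technical implementation differs at both key steps, though. For sufficiency, the paper (Lemma \ref{cmprs_a} with $\ell=0$) sets $z_i=\E_i\sigma_H-y_i$ and invokes the transience test (Theorem \ref{dmc}) for the recurrent embedded/underlying chain to conclude $z_i\geqslant z_0=0$; you instead iterate the inequality $r$ times with taboo probabilities and kill the remainder term using $\sup y<\infty$ and $\pb_i[\sigma_H>r]\to0$. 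The two arguments use recurrence and the upper bound on $y$ in the same essential way, but yours is self-contained and avoids routing through the transience criterion and the minimal-solution comparison machinery. For necessity, your $y^{(n)}_i=\E_i\sigma_{H\cup(E\setminus E_n)}$ is exactly the minimal solution of the paper's truncated system (its $x^{(n)}_i$, whose maximum is $M_n$); the paper proves finiteness by comparing with a modified finite-state chain $Q^{(n)}$ and gets the monotone convergence $x^{(n)}_i\uparrow\E_i\sigma_H$ from the abstract approximation theorem for minimal solutions (Theorem \ref{mini_apprx_org}), whereas you prove finiteness by a uniform geometric exit-time bound from the finite set $E_n\setminus H$ and the convergence by the pathwise observation $\sigma_{H_n}\uparrow\sigma_H$ plus monotone convergence. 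What the paper's route buys is uniformity with its other proofs (the same minimal-solution toolkit drives Theorems \ref{in_aerg_con} and \ref{in_eerg_con}); what yours buys is a purely probabilistic, elementary argument that makes explicit where recurrence and the one-sided bound enter.
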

        \renewcommand{\thetheorem}{\arabic{theorem}}

        \renewcommand{\thetheorem}{\ref{in_aerg_con}$^\prime$}
        \addtocounter{theorem}{-1}
        \begin{theorem}\label{in_aerg_dis}
            Let $P=(P_{ij})$ be an irreducible aperiodic transition matrix and $H$ a non-empty finite subset of $E$. Suppose the chain is $\ell$-ergodic for some non-negative integer $\ell$, then the chain is not $(\ell+1)$-ergodic iff there is a sequence $\{y^{(n)}\}^{\infty}_{n=1}$, where $y^{(n)}=\bigl(y^{(n)}_i\bigr)_{i\in E}$ for each $n\geqslant 1$, and $\{y^{(n)}\}^{\infty}_{n=1}$ satisfies the following conditions:
                \begin{enumerate}[\upshape (1)]
                    \item for each $n\geqslant 1$, $\bigl(y^{(n)}_i\bigr)_{i\in E}$ satisfies $\sup_{i\in E} y^{(n)}_i<\infty$ and solves inequality
                            \renewcommand{\theequation}{\ref{in_aerg_con_eq}$^\prime$}
                            \addtocounter{equation}{-1}
                         \begin{equation}\label{in_aerg_dis_eq}
                            y_i\leqslant\sum_{j\notin H}P_{ij}y_j+\E_i\mkern-1.5mu\sigma_H^{\ell},\qquad i\in E;
                         \end{equation}
                            \renewcommand{\theequation}{\arabic{equation}}

                    \item $\sup_{n\geqslant 1} \max_{i\in H} y^{(n)}_i =\infty$ (or equivalently, $\varlimsup_{n\to \infty} \max_{i\in H} y^{(n)}_i =\infty$).
                \end{enumerate}
        \end{theorem}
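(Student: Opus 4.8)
The plan is to run, for the discrete-time chain, the argument that establishes the continuous-time Theorem~\ref{in_aerg_con}, with the taboo kernel playing the role of the $q_i^{-1}$-weighted jump operator. First I would set up notation: let ${}_HP$ be the substochastic kernel with $({}_HP)_{ij}=P_{ij}\mathbf 1_{\{j\notin H\}}$, put $g_i:=\E_i\sigma_H^{\ell}$ (finite for $i\in H$ exactly because the chain is $\ell$-ergodic), and let $y^{\ast}:=\sum_{k\ge0}{}_HP^{\,k}g$, the minimal non-negative solution of $y={}_HPy+g$, so that $y^{\ast}_i=\E_i\bigl[\sum_{k=0}^{\sigma_H-1}g_{X_k}\bigr]$. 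The first substantive step is to identify $y^{\ast}$ probabilistically. Applying the Markov property at each time $k$ on $\{\sigma_H>k\}$, where $X_k\notin H$ so the residual hitting time is exactly $\sigma_H-k$ and $\E_{X_k}\sigma_H^{\ell}=\E_i[(\sigma_H-k)^{\ell}\mid\mathcal F_k]$, one gets
\[
   y^{\ast}_i=\sum_{k\ge0}\E_i\bigl[(\sigma_H-k)^{\ell}\mathbf 1_{\{\sigma_H>k\}}\bigr]=\E_i\Bigl[\textstyle\sum_{m=1}^{\sigma_H}m^{\ell}\Bigr].
\]
Using the elementary bound $\tfrac{N^{\ell+1}}{\ell+1}\le\sum_{m=1}^{N}m^{\ell}\le N^{\ell+1}$ for integers $N\ge1$, this gives $\tfrac1{\ell+1}\E_i\sigma_H^{\ell+1}\le y^{\ast}_i\le\E_i\sigma_H^{\ell+1}$; hence $\max_{i\in H}y^{\ast}_i<\infty$ iff $\max_{i\in H}\E_i\sigma_H^{\ell+1}<\infty$, i.e.\ iff the chain is $(\ell+1)$-ergodic, by the (discrete-time analogue of the) probabilistic description of algebraic ergodicity recalled in the introduction. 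This is precisely why the factor $\ell+1$ appearing in \eqref{in_aerg_con_eq} is absent from \eqref{in_aerg_dis_eq}: the discrete minimal solution is the Faulhaber sum $\E_i[\sum_{m\le\sigma_H}m^{\ell}]$, only comparable to $\E_i\sigma_H^{\ell+1}$, rather than exactly equal to it.

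Next I would prove a comparison principle: if $y$ solves \eqref{in_aerg_dis_eq} and $M:=\sup_iy_i<\infty$, then iterating $y\le{}_HPy+g$ yields $y_i\le({}_HP^{\,N}y)_i+\sum_{k=0}^{N-1}({}_HP^{\,k}g)_i$ for every $N$ and $i$, with $({}_HP^{\,N}y)_i=\E_i[y_{X_N}\mathbf 1_{\{\sigma_H>N\}}]\le M^{+}\,\pb_i[\sigma_H>N]\to0$ since the chain is (without loss of generality) recurrent and irreducible and $H$ is a non-empty finite set, and with $({}_HP^{\,k}g)_i\le\E_i\sigma_H^{\ell}<\infty$ for $i\in H$. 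Letting $N\to\infty$ gives $y_i\le y^{\ast}_i$ for all $i\in H$. Consequently, if the chain \emph{is} $(\ell+1)$-ergodic, then $\max_{i\in H}y_i\le\max_{i\in H}y^{\ast}_i<\infty$ for \emph{every} solution $y$ of \eqref{in_aerg_dis_eq} with $\sup_iy_i<\infty$, so no sequence satisfying (1)--(2) exists. This settles the ``if'' direction by contraposition.

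For the ``only if'' direction, suppose the chain is not $(\ell+1)$-ergodic. I would exhaust $E$ by finite sets $H\subseteq E_1\subseteq E_2\subseteq\cdots$ with $\bigcup_nE_n=E$, set $g^{(n)}_i:=\min(g_i,n)\,\mathbf 1_{E_n}(i)$ (bounded, finitely supported, $g^{(n)}\uparrow g$ pointwise), and take $y^{(n)}:=\sum_{k\ge0}{}_HP^{\,k}g^{(n)}=\E_{\cdot}\bigl[\sum_{k=0}^{\sigma_H-1}g^{(n)}_{X_k}\bigr]$. Three checks: (a) $0\le y^{(n)}_i\le n\sum_{j\in E_n}\E_i[\#\{0\le k<\sigma_H:X_k=j\}]\le n\sum_{j\in E_n}\bigl(1-\pb_j[\sigma^{+}_j<\sigma_H]\bigr)^{-1}<\infty$ uniformly in $i$, since each expected visit count is a geometric mean with ratio $\pb_j[\sigma^{+}_j<\sigma_H]<1$ (strictness from irreducibility, via a shortest, hence $j$-avoiding, path from $j$ to $H$); (b) $y^{(n)}=g^{(n)}+{}_HPy^{(n)}$, so $y^{(n)}_i\le\E_i\sigma_H^{\ell}+\sum_{j\notin H}P_{ij}y^{(n)}_j$ and $y^{(n)}$ solves \eqref{in_aerg_dis_eq} (trivially at any state with $g_i=\infty$); (c) by monotone convergence $y^{(n)}_i\uparrow y^{\ast}_i=\E_i[\sum_{m\le\sigma_H}m^{\ell}]$. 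Since the chain is not $(\ell+1)$-ergodic and $H$ is finite, some $i_0\in H$ has $\E_{i_0}\sigma_H^{\ell+1}=\infty$, whence $y^{\ast}_{i_0}=\infty$ and $y^{(n)}_{i_0}\uparrow\infty$, giving condition (2).

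\textbf{Where the difficulty sits.} The only genuinely new ingredient relative to Theorem~\ref{in_aerg_con} is the first step: recognizing the discrete minimal solution as $\E_i[\sum_{m\le\sigma_H}m^{\ell}]$ and matching its finiteness on $H$, through the Faulhaber bound, to the hitting-time characterization of $(\ell+1)$-ergodicity; the only care needed is the convention $\sigma_H=\inf\{t\ge1:X_t\in H\}$ when invoking the Markov property (on $\{\sigma_H>k\}$ one does have $X_k\notin H$, so the residual time is exactly $\sigma_H-k$). The comparison/iteration argument, the truncation $g^{(n)}$, and the uniform visit-count bound are routine. I would also remark that the result can alternatively be reduced to Theorem~\ref{in_aerg_con} by realizing $P$ as the embedded chain of a continuous-time process with all rates equal to $1$ and comparing $\E_i\sigma_H^{\ell}$ for the two time scales (which agree up to a factor depending only on $\ell$), but the direct route is cleaner to write.
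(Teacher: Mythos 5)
Your argument is correct, and it reaches the statement by a genuinely different route from the one the paper intends: the paper proves only the continuous-time \Cref{in_aerg_con} and declares the discrete case ``similar,'' its template being, for sufficiency, \Cref{cmprs_a_dis} (compare a bounded-above subsolution with $\E_\cdot\sigma_H^{\ell+1}$ by forming the difference and invoking the transience criterion \Cref{dmc} for the recurrent chain) and, for necessity, \Cref{apprxa} (truncate the \emph{state space}, take minimal solutions of the resulting finite systems, and let them increase via \Cref{mini_apprx_org}). You instead work with the taboo kernel ${}_HP$ directly: you identify the minimal solution of $y={}_HPy+\E_\cdot\sigma_H^{\ell}$ as the Faulhaber sum $\E_i\bigl[\sum_{m\leqslant\sigma_H}m^{\ell}\bigr]$ and squeeze it between $\tfrac{1}{\ell+1}\E_i\sigma_H^{\ell+1}$ and $\E_i\sigma_H^{\ell+1}$; you prove the comparison principle by bare-hands iteration instead of citing \Cref{mini_cmprs} and \Cref{dmc}; and for necessity you truncate the \emph{forcing term} rather than the kernel, bounding $\sup_i y^{(n)}_i$ by a visit-count (Green function) estimate rather than by ergodicity of a localized finite process. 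Your identification of the minimal solution is a genuine clarification of why the factor $\ell+1$ present in \Cref{in_aerg_con_eq} is absent from \Cref{in_aerg_dis_eq}: in discrete time $\E_\cdot\sigma_H^{\ell+1}$ is only a supersolution of the equation (because $\binom{\ell+1}{k}\geqslant\binom{\ell}{k}$), not the minimal solution, so the paper's \Cref{cmprs_a_dis} remains true, but it is your two-sided Faulhaber bound that closes the necessity direction. Two small points to tighten: in the iteration $y\leqslant{}_HP^{N}y+\sum_{k<N}{}_HP^{k}g$ you should first replace $y$ by $y\vee0$ (the inequality survives since ${}_HP(y\vee0)\geqslant{}_HPy$ and $g\geqslant0$), so that ${}_HP^{N}y$ is well defined even when $y$ is unbounded below, which the paper's remark following \Cref{in_serg_con} explicitly permits; and the geometric visit-count bound requires $\pb_{j}[\sigma_j^{+}<\sigma_H]<1$, which your loop-erasure observation does supply under irreducibility.
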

        \renewcommand{\thetheorem}{\arabic{theorem}}
        The remainder of this paper is organized as follows. In \Cref{chp_prf}, we present proofs for our criteria. In \Cref{chp_sbp}, our criteria are applied to single birth processes. Some multi-dimensional models are treated in \Cref{chp_app}.


\section{Proofs of Criteria for Inverse Problems}\label{chp_prf}
    \subsection{Minimal Solution Theory Preparations}\label{sec_mini_soln}
    Our proofs are based on minimal solution theory. To begin, let's first recall promptly some useful results in minimal solution theory from \cite{hou1988,chen2004}.
    \par
    Let $E$ be an arbitrary non-empty set. Denote by $\mathscr{H}$ a set of mappings from $E$ to $\overline{\R}_+\coloneqq [0,+\infty]$: $\mathscr{H}$ contains constant 1 and is closed under non-negative linear combination and monotone increasing limit, where the order relation ``$\geqslant$'' in $\mathscr{H}$ is defined pointwise. Then, $\mathscr{H}$ is a convex cone. We say that $A\colon\mathscr{H}\to\mathscr{H}$ is a cone mapping if $A0=0$ and
            \begin{displaymath}
                A(c_1f_1+c_2f_2)=c_1Af_1+c_2Af_2,\qquad \text{for all }c_1, c_2\geqslant 0 \text{ and }f_1, f_2\in \mathscr{H}.
            \end{displaymath}
        Denote by $\mathscr{A}$ the set of all such mappings which also satisfy the following hypothesis:
            \begin{displaymath}
                \mathscr{H}\ni f_n\uparrow f \quad \text{implies}\quad  Af_n\uparrow Af.
            \end{displaymath}
			
        \begin{definition}
            Given $A\in\mathscr{A}$ and $g\in\mathscr{H}$. We say $f^{*}$ is a minimal non-negative solution (abbr.\@ minimal solution) to equation
                \begin{equation}\label{mini_eq}
                    f=Af+g,\qquad x\in E,
                \end{equation}
            if $f^*$ satisfies \Cref{mini_eq} and for any solution $\widetilde{f}\in\mathscr{H}$ to \Cref{mini_eq}, we have
                \begin{displaymath}
                    \widetilde{f}\geqslant f^*,\qquad x\in E.
                \end{displaymath}
        \end{definition}
		
        \begin{theorem}[\mbox{\cite[Theorem 2.2]{chen2004}}]\label{mini_uniq}
            The minimal solution to \Cref{mini_eq} always exists uniquely.
        \end{theorem}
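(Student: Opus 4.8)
\emph{Proof strategy.} The plan is the classical monotone (Picard-type) iteration, with the completeness of a Banach space replaced by the fact that $\mathscr{H}$ is closed under monotone increasing limits. Uniqueness is immediate: if $f_1^\ast$ and $f_2^\ast$ are both minimal non-negative solutions to \Cref{mini_eq}, then applying the minimality of each to the other gives $f_1^\ast\geqslant f_2^\ast$ and $f_2^\ast\geqslant f_1^\ast$, so $f_1^\ast=f_2^\ast$; hence only existence requires an argument.

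For existence I would build the Neumann-type series directly. Put $f^{(0)}:=0$ and, for $n\geqslant 1$, $f^{(n)}:=\sum_{k=0}^{n-1}A^k g$ (with $A^0:=\mathrm{id}$). Since $g\in\mathscr{H}$ and $A$ maps $\mathscr{H}$ into $\mathscr{H}$, each $A^k g\in\mathscr{H}$, and closure under non-negative linear combinations gives $f^{(n)}\in\mathscr{H}$. As elements of $\mathscr{H}$ are $\overline{\R}_+$-valued we have $f^{(n+1)}=f^{(n)}+A^n g\geqslant f^{(n)}$, so the sequence increases; set $f^\ast:=\lim_n f^{(n)}=\sup_n f^{(n)}\in\mathscr{H}$, using closure under monotone increasing limits. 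The cone property yields $Af^{(n)}+g=\sum_{k=0}^{n-1}A^{k+1}g+g=f^{(n+1)}$; letting $n\to\infty$ and invoking $A\in\mathscr{A}$ (so $f^{(n)}\uparrow f^\ast$ forces $Af^{(n)}\uparrow Af^\ast$) gives $Af^\ast+g=f^\ast$, i.e.\ $f^\ast$ solves \Cref{mini_eq}. For minimality, let $\widetilde f\in\mathscr{H}$ be any solution; iterating the equation and using the linearity of $A$, $\widetilde f=A\widetilde f+g=A^2\widetilde f+Ag+g=\cdots=A^n\widetilde f+f^{(n)}$ for every $n$, and since $A^n\widetilde f\in\mathscr{H}$ is nonnegative this gives $\widetilde f\geqslant f^{(n)}$ for all $n$, hence $\widetilde f\geqslant f^\ast$.

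The argument has no serious obstacle: the one interchange that would be delicate in a more general context — passing the limit through $A$ — is exactly the defining hypothesis of $\mathscr{A}$, and everything else is bookkeeping with non-negative (possibly $+\infty$-valued) quantities, so neither a contraction estimate nor any finiteness assumption is needed; note in particular that monotonicity of $A$ is never invoked, only its cone linearity. I would also remark that this is precisely the engine behind the later criteria: the hitting-time functionals $\bigl(\E_i\sigma_H^\ell\bigr)_{i\in E}$, $\bigl(\E_i\ue^{\lambda\sigma_H}\bigr)_{i\in E}$ and their analogues arise as the minimal non-negative solutions of linear systems of the form \Cref{mini_eq}, so \Cref{mini_uniq} is what lets one identify and manipulate them throughout the proofs in \Cref{chp_prf}.
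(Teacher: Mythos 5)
The paper does not prove this statement—it is quoted verbatim from \cite[Theorem 2.2]{chen2004} and used as a black box—so there is no in-paper proof to compare against. Your argument is correct and is essentially the standard one from that reference: the Neumann-series iterates $f^{(n)}=\sum_{k=0}^{n-1}A^kg$ coincide with the successive approximations $f^{(n+1)}=Af^{(n)}+g$ started from $0$, the passage to the limit is exactly the monotone-continuity hypothesis defining $\mathscr{A}$, and the minimality and uniqueness steps are handled correctly.
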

		
        \begin{definition}
            Let $A, \widetilde{A}\in \mathscr{A}$ and $g, \widetilde{g}\in \mathscr{H}$ satisfy
                \begin{displaymath}
                   \widetilde{A}\geqslant A,\qquad \widetilde{g}\geqslant g.
                \end{displaymath}
            Then we call
                \begin{equation}\label{mini_ctrl}
                    \widetilde{f} \geqslant \widetilde{A}\widetilde{f}+\widetilde{g},\qquad x\in E
                \end{equation}
            a controlling equation of \Cref{mini_eq}.
        \end{definition}
		
        \begin{theorem}[\mbox{\cite[Theorem 2.6]{chen2004}, Comparison Principle}]\label{mini_cmprs}
            Let $f^*$ be the minimal solution to \Cref{mini_eq}. Then for any solution $\widetilde{f}$ to \Cref{mini_ctrl}, we have $\widetilde{f}\geqslant f^*$.
        \end{theorem}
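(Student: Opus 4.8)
The plan is to represent the minimal solution $f^*$ as the increasing limit of the Picard iterates of \Cref{mini_eq} and then to drive any solution of the control inequality \Cref{mini_ctrl} through the very same iteration by induction. Concretely, I would set $f_0 \coloneqq 0$ and $f_{n+1} \coloneqq A f_n + g$ for $n \geqslant 0$. Since $A 0 = 0$ we have $f_1 = g \geqslant 0 = f_0$; invoking monotonicity of $A$, an easy induction shows $f_n \leqslant f_{n+1}$ for all $n$, so $(f_n)$ is non-decreasing and its pointwise supremum $u \coloneqq \sup_n f_n$ belongs to $\mathscr H$ by closure under monotone increasing limits. The defining continuity property of $\mathscr A$ gives $A f_n \uparrow A u$, whence $u = \sup_n f_{n+1} = \sup_n (A f_n + g) = A u + g$, so $u$ solves \Cref{mini_eq}; moreover $u$ is precisely the minimal solution $f^*$ guaranteed by \Cref{mini_uniq} (this is the standard construction underlying that theorem, and it also drops out of the induction below applied to $\widetilde f = f^*$).

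Two auxiliary facts feed into this. First, monotonicity of $A$: for $h_1 \leqslant h_2$ in $\mathscr H$ one writes $h_2 = h_1 + (h_2 - h_1)$ with $h_2 - h_1 \in \mathscr H$ (the difference read in $\overline{\R}_+$), and additivity of the cone mapping gives $A h_2 = A h_1 + A(h_2 - h_1) \geqslant A h_1$. Second, all relations---$\widetilde A \geqslant A$, $\widetilde g \geqslant g$, and the monotone convergence $f_n \uparrow u$---are understood pointwise in $\overline{\R}_+ = [0, +\infty]$, so that suprema commute with the addition of the fixed term $g$.

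The core of the proof is then a one-line induction. Let $\widetilde f$ satisfy \Cref{mini_ctrl}, i.e.\ $\widetilde f \geqslant \widetilde A \widetilde f + \widetilde g$ with $\widetilde A \geqslant A$ and $\widetilde g \geqslant g$. I claim $\widetilde f \geqslant f_n$ for every $n$. The base case $\widetilde f \geqslant 0 = f_0$ holds because $\widetilde f \in \mathscr H$ is non-negative. Granting $\widetilde f \geqslant f_n$, I would chain
\[
    \widetilde f \;\geqslant\; \widetilde A \widetilde f + \widetilde g \;\geqslant\; A \widetilde f + g \;\geqslant\; A f_n + g \;=\; f_{n+1},
\]
where the second inequality uses $\widetilde A \geqslant A$ and $\widetilde g \geqslant g$, and the third uses monotonicity of $A$ together with the inductive hypothesis $\widetilde f \geqslant f_n$. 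Letting $n \to \infty$ gives $\widetilde f \geqslant \sup_n f_n = u = f^*$, which is the assertion.

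I expect the only real care to lie in the bookkeeping with infinite values and in cleanly justifying the two structural inputs---monotonicity of $A$ on all of $\mathscr H$ and the interchange of $A$ with the increasing limit $f_n \uparrow u$ via the continuity hypothesis defining $\mathscr A$. Once these are in place, the comparison principle is exactly the displayed induction, so there is no substantive further obstacle.
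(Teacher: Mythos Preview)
Your argument is correct and is precisely the standard iterative proof of the comparison principle. Note, however, that the paper does not supply its own proof of this statement: it is quoted verbatim from \cite[Theorem~2.6]{chen2004} as background in \S\ref{sec_mini_soln}, so there is no in-paper proof to compare against. That said, the proof in Chen's monograph proceeds exactly as you outline---build $f^*$ as the increasing limit of the iterates $f_0=0$, $f_{n+1}=Af_n+g$, and then dominate the iterates by any $\widetilde f$ satisfying \Cref{mini_ctrl} via the one-line induction you wrote---so your proposal matches the source.

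The only place to tighten is the monotonicity step for $A$. Your justification via $h_2 = h_1 + (h_2-h_1)$ is fine when $h_1$ is finite, but runs into $\infty-\infty$ otherwise; in the actual induction this never bites, since you only need $A\widetilde f \geqslant A f_n$ and the iterates $f_n$ are dominated by the solution $f^*$ anyway, but a cleaner route is to note that for $h_1\leqslant h_2$ one has $h_1\wedge n \uparrow h_1$ and may pass to the limit using the monotone-continuity axiom for $A\in\mathscr A$, or simply to observe that in Chen's setting $\mathscr H$ is all of $\overline{\R}_+^{\,E}$ so the difference can be harmlessly set to $0$ on $\{h_1=\infty\}$. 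You already flag this bookkeeping, so there is no substantive gap.
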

		
        By \Cref{mini_uniq}, we may define a map
        \begin{displaymath}\begin{split}
             m_A\colon  \mathscr{H} &\to \mathscr{H},\\
                    g &\mapsto m_A g,
        \end{split}\end{displaymath}
        where $m_A g$ denotes the minimal solution to \Cref{mini_eq}.
        \begin{theorem}[\mbox{\cite[Theorem 2.7]{chen2004}}]\label{mini_apprx_org}
            $m_A$ is a cone mapping. For $\{A_n\}\subseteq \mathscr{A}$, $A_n \uparrow A$ and $\{g_n\}\subseteq \mathscr{H}$, $g_n\uparrow g$, we have $A\in \mathscr{A}, g\in\mathscr{H}$ and $m_{A_n}g_n\uparrow m_{A}g$.
        \end{theorem}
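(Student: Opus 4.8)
The plan is to funnel everything through the Neumann‑type series expansion of the minimal solution: this makes the cone‑mapping assertion essentially automatic and converts $m_{A_n}g_n\uparrow m_Ag$ into an interchange of nested monotone limits. First I would prove that for every $A\in\mathscr A$ and $g\in\mathscr H$,
\begin{displaymath}
    m_A g=\sum_{k\geqslant0}A^k g ,
\end{displaymath}
understood as the increasing limit of the partial sums $S_N\coloneqq\sum_{k=0}^N A^k g$, which lie in $\mathscr H$ and increase with $N$, so that $F\coloneqq\sup_N S_N\in\mathscr H$. That $F$ solves $f=Af+g$ uses only additivity of $A$ and the hypothesis that $A$ commutes with increasing limits: $AF=\sup_N A S_N=\sup_N\sum_{k=1}^{N+1}A^kg=\sum_{k\geqslant1}A^kg$, hence $AF+g=F$. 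Minimality is obtained by iterating an arbitrary solution $\widetilde f\in\mathscr H$, namely $\widetilde f=A^{N+1}\widetilde f+S_N\geqslant S_N$ for every $N$, so $\widetilde f\geqslant F$; then \Cref{mini_uniq} identifies $F$ with $m_Ag$. (Two elementary facts enter here and below: every $A\in\mathscr A$ is order‑preserving — apply $\uparrow$‑continuity to the increasing sequence $h\leqslant h'\leqslant h'\leqslant\cdots$ whenever $h\leqslant h'$ — and a composition, in particular a power, of maps in $\mathscr A$ again lies in $\mathscr A$.) Granting the series formula, the claim that $m_A$ is a cone mapping is immediate, since each $A^k$ is additive and positively homogeneous and finite sums and increasing limits in $\mathscr H$ respect non‑negative linear combinations: $m_A(c_1g_1+c_2g_2)=\sum_k A^k(c_1g_1+c_2g_2)=c_1 m_Ag_1+c_2 m_Ag_2$ and $m_A0=0$.

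For the convergence statement, set $A\coloneqq\sup_n A_n$ and $g\coloneqq\sup_n g_n$. Then $g\in\mathscr H$ as a monotone limit, and $A\in\mathscr A$: the identities $A0=0$ and additivity/homogeneity pass termwise, while $\uparrow$‑continuity of $A$ is the double‑supremum exchange
\begin{displaymath}
    A\bigl(\sup_m f_m\bigr)=\sup_n A_n\bigl(\sup_m f_m\bigr)=\sup_n\sup_m A_nf_m=\sup_m\sup_n A_nf_m=\sup_m Af_m ,
\end{displaymath}
valid because all quantities lie in $\overline{\R}_+$ and each $A_n$ is $\uparrow$‑continuous. The same interchange, by induction on $k$, gives $A_n^k f\uparrow A^kf$ for every $f\in\mathscr H$; combining this with $g_n\uparrow g$ and the monotonicity of $A_n^k$ yields $A_n^kg_n\uparrow A^kg$ for each $k$. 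Since $m_{A_n}g_n=\sum_{k\geqslant0}A_n^kg_n$ increases with $n$ (each summand does), monotone convergence for sums of non‑negative terms lets one swap $\sup_n$ with $\sum_k$: $\sup_n m_{A_n}g_n=\sum_k\sup_n A_n^kg_n=\sum_k A^kg=m_Ag$, i.e.\ $m_{A_n}g_n\uparrow m_Ag$.

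The routine parts are the series formula and its corollary that $m_A$ is a cone mapping; the genuine obstacle — recurring in each display of the previous paragraph — is the legitimacy of interchanging nested monotone limits, which is exactly where the standing hypothesis that maps in $\mathscr A$ commute with increasing limits is consumed, and once one is at ease with sup/sup and sup/sum swaps of $\overline{\R}_+$‑valued quantities there is nothing further to it. A series‑free variant of the convergence step is also available via the comparison principle (\Cref{mini_cmprs}): $m_{A_n}g_n$ increases and is dominated by $m_Ag$ because $f=Af+g$ is a controlling equation of $f=A_nf+g_n$, and one checks by the same limit exchanges that $f^\infty\coloneqq\sup_n m_{A_n}g_n$ solves $f=Af+g$, whence $f^\infty\geqslant m_Ag$ by minimality. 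I would still present the series route, since it delivers the cone‑mapping claim at no extra cost.
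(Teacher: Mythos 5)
Your proof is correct. Note, however, that the paper does not prove this statement at all: it is quoted verbatim as Theorem 2.7 of \cite{chen2004}, so there is no in-paper argument to compare against. Your Neumann-series construction $m_Ag=\sup_N\sum_{k=0}^{N}A^kg$ is precisely the successive-approximation scheme underlying the cited existence theorem (\Cref{mini_uniq}), and the two points you single out as the real content — that order-preservation of $A$ follows from $\uparrow$-continuity applied to $h\leqslant h'\leqslant h'\leqslant\cdots$, and that the diagonal limit $A_n^kg_n\uparrow A^kg$ reduces to interchanges of nested suprema of $\overline{\R}_+$-valued increasing families — are exactly where the hypotheses on $\mathscr A$ and $\mathscr H$ are consumed; both are handled correctly, as is the final $\sup_n/\sum_k$ swap. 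In short, this is a complete and standard proof of the quoted result, supplied where the paper simply defers to the reference.
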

    The following minimal solution characterizations of moments of hitting times are essential for us to exploit minimal solution theory.
        \begin{theorem}[\mbox{\cite[Theorem 3.1]{mao2004}}]
            \label{con_mini_alge}
            For any $\ell\geqslant 1$, the moments of return times $\E_i\mkern-1.5mu\sigma_H,\E_i\mkern-1.5mu\sigma_H^2,\ldots,\E_i\mkern-1.5mu\sigma_H^\ell$ are inductively the minimal solution to the following $\ell$-family of systems for $0\leqslant n\leqslant \ell-1$,
            \begin{displaymath}
                x_i^{(n+1)}=\sum_{j\notin H}\Pi_{ij}x_j^{(n+1)}+\frac{(n+1)}{q_i}x_i^{(n)},\qquad i\in E,
            \end{displaymath}
            where $x_i^{(0)}=1\,(i\in E)$.
        \end{theorem}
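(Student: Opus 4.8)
The plan is to argue by induction on $n$, at each stage identifying the solution set of the $(n+1)$-st system with that of a single minimal-solution equation. Write $A_H$ for the cone mapping on $\mathscr H$ given by $(A_Hf)_i=\sum_{j\notin H}\Pi_{ij}f_j$, and set $g^{(n+1)}_i=\frac{n+1}{q_i}\E_i\sigma_H^{n}$. The base case is immediate: recurrence makes $\sigma_H$ almost surely finite, so $\E_i\sigma_H^{0}=1=x_i^{(0)}$; and at the inductive step $g^{(n+1)}$ is a genuine element of $\mathscr H$ furnished by the previous stage. Since $A_H\in\mathscr A$, \Cref{mini_uniq} produces the minimal solution $m_{A_H}g^{(n+1)}$ of $f=A_Hf+g^{(n+1)}$; recalling that the minimal solution arises as the increasing limit $m_{A_H}g=\sum_{k\ge0}A_H^{k}g$ of its iterates $\sum_{k=0}^{M}A_H^{k}g$ (cf.\ the construction behind \Cref{mini_apprx_org}), and that $(A_H^{k}g)_i=\E_i\bigl[g(X_{\eta_k});\;X_{\eta_1},\dots,X_{\eta_k}\notin H\bigr]$, one obtains the probabilistic representation
\[
   (m_{A_H}g)_i=\E_i\sum_{k=0}^{N-1}g(X_{\eta_k}),\qquad N:=\inf\{m\ge1\colon X_{\eta_m}\in H\},
\]
and $\sigma_H=\eta_N$. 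It therefore suffices to prove that $i\mapsto\E_i\sigma_H^{n+1}$ equals the right-hand side of this display with $g=g^{(n+1)}$.

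The first main step is an occupation-time identity expressing the $(n+1)$-st moment through the $n$-th. Beginning from the pathwise identity $\sigma_H^{n+1}=(n+1)\int_0^{\sigma_H}(\sigma_H-s)^n\,\mathrm{d}s$, I would note that $\{s<\sigma_H\}\in\mathcal F_s$ and that on this event $\sigma_H-s=\sigma_H\circ\theta_s$; the ordinary Markov property at the deterministic time $s$ then gives
\[
   \E_i\bigl[(\sigma_H-s)^n;\;s<\sigma_H\bigr]=\E_i\bigl[\E_{X_s}[\sigma_H^{n}];\;s<\sigma_H\bigr].
\]
Integrating over $s\in(0,\infty)$ and applying Tonelli — legitimate since everything is $[0,+\infty]$-valued — yields $\E_i\sigma_H^{n+1}=(n+1)\,\E_i\!\int_0^{\sigma_H}\E_{X_s}[\sigma_H^{n}]\,\mathrm{d}s$.

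The second main step transports this occupation integral onto the embedded chain. As the $Q$-process is constant on each $[\eta_k,\eta_{k+1})$ and $\sigma_H=\eta_N$, the integral equals $\sum_{k=0}^{N-1}\E_{X_{\eta_k}}[\sigma_H^{n}]\,(\eta_{k+1}-\eta_k)$; conditioning on the whole embedded chain — given which the sojourn times are independent with $\eta_{k+1}-\eta_k\sim\mathrm{Exp}(q_{X_{\eta_k}})$ — replaces each sojourn time by its conditional mean $1/q_{X_{\eta_k}}$, so that
\[
   \E_i\sigma_H^{n+1}=(n+1)\,\E_i\sum_{k=0}^{N-1}\frac{\E_{X_{\eta_k}}[\sigma_H^{n}]}{q_{X_{\eta_k}}}=\E_i\sum_{k=0}^{N-1}g^{(n+1)}(X_{\eta_k}),
\]
which by the representation above is precisely $(m_{A_H}g^{(n+1)})_i$. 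This closes the induction and exhibits each $\E_i\sigma_H^{n+1}$ as the claimed minimal solution.

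I do not expect a deep obstruction; the two points demanding care are the following. First, the moments $\E_i\sigma_H^{n}$ may be $+\infty$, so every manipulation has to be carried out in $[0,+\infty]$ with monotone convergence and Tonelli replacing dominated convergence — which is exactly the setting the minimal-solution formalism and the series representation $m_{A_H}g=\sum_{k\ge0}A_H^kg$ are built for. Second, the identity $\sigma_H-s=\sigma_H\circ\theta_s$ on $\{s<\sigma_H\}$ is the sole genuinely probabilistic input and needs a short case check according to whether the first jump has already occurred by time $s$, because $\sigma_H$ is the \emph{return} time $\inf\{t\ge\eta_1\colon X_t\in H\}$ and not a plain hitting time; once it is established, the Markov-property step goes through uniformly for all $i\in E$, including $i\in H$ (where $X_s$, hence $\E_{X_s}[\sigma_H^{n}]$, may still sit at a point of $H$).
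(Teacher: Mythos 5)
The paper does not prove this statement at all: it is imported verbatim as \cite[Theorem 3.1]{mao2004}, so there is no in-paper argument to compare against. Your proof is, as far as I can check, correct and self-contained, and it is a sensible way to obtain the result. The two pillars are sound: (i) the series/iteration construction of the minimal solution gives the additive-functional representation $(m_{A_H}g)_i=\E_i\sum_{k=0}^{N-1}g(X_{\eta_k})$ with $N=\inf\{m\geqslant 1\st X_{\eta_m}\in H\}$, which delivers existence \emph{and} minimality in one stroke rather than first exhibiting a solution by first-step analysis and then proving minimality by comparison with the iterates (the route one would expect from the minimal-solution machinery of \Cref{mini_uniq,mini_cmprs,mini_apprx_org}); and (ii) the occupation-time identity $\sigma_H^{n+1}=(n+1)\int_0^{\sigma_H}(\sigma_H-s)^n\df{s}$ combined with the Markov property at deterministic times and the conditional exponential sojourn times reduces the $(n+1)$-st moment to $\E_i\sum_{k=0}^{N-1}(n+1)\E_{X_{\eta_k}}[\sigma_H^n]/q_{X_{\eta_k}}$, exactly matching $g^{(n+1)}$. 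You correctly flag the two delicate points: all manipulations must live in $[0,+\infty]$ with Tonelli (the moments may be infinite, and regularity of $Q$ plus recurrence make $\sigma_H=\eta_N<\infty$ a.s.\@ so the pathwise identities are meaningful), and the identity $\sigma_H-s=\sigma_H\circ\theta_s$ on $\{s<\sigma_H\}$ needs the case split on whether $s<\eta_1$, using that $X_s\notin H$ whenever $\eta_1\leqslant s<\sigma_H$ so that the shifted return time coincides with the shifted hitting time there. The only presentational caveat is that the statement asserts the result for all $i\in E$ including $i\in H$, and your argument does cover that case uniformly, but it would be worth one explicit sentence confirming that the representation of $(A_H^k g)_i$ and the occupation decomposition make no use of $i\notin H$.
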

		
        When $\ell=1$, \Cref{con_mini_alge} gives
        \begin{corollary}\label{con_min1}
            $(\E_i\mkern-1.5mu\sigma_H)_{i\in E}$ is the minimal solution to
            \begin{displaymath}
                x_i=\sum_{j\notin H}\Pi_{ij}x_j+\frac{1}{q_i},\qquad i\in E.
            \end{displaymath}
        \end{corollary}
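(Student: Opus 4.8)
The plan is to obtain \Cref{con_min1} as the special case $\ell = 1$ of \Cref{con_mini_alge}. First I would observe that when $\ell = 1$ the $\ell$-family of systems in \Cref{con_mini_alge} consists of a single system, the one indexed by $n = 0$, namely
\[
    x_i^{(1)}=\sum_{j\notin H}\Pi_{ij}x_j^{(1)}+\frac{1}{q_i}x_i^{(0)},\qquad i\in E .
\]
Next I would substitute the initialization $x_i^{(0)} = 1$ for every $i \in E$, which turns the right-hand side into $\sum_{j\notin H}\Pi_{ij}x_j^{(1)}+1/q_i$; this is exactly the equation appearing in the statement of the corollary. \Cref{con_mini_alge} then asserts that $\bigl(\E_i\sigma_H\bigr)_{i\in E}$ is the minimal non-negative solution of this system, which is precisely the claim.

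For completeness I would briefly record that this equation genuinely falls within the minimal solution framework of \Cref{sec_mini_soln}, so that ``the minimal solution'' is unambiguous: the operator $A$ defined by $(Af)_i = \sum_{j\notin H}\Pi_{ij}f_j$ is a cone mapping in $\mathscr{A}$ (it sends $0$ to $0$, is linear under non-negative combinations, and commutes with monotone increasing limits by monotone convergence since $\Pi_{ij}\geqslant 0$), while $g=(1/q_i)_{i\in E}$ lies in $\mathscr{H}$ because $q_i\in(0,\infty)$ for each $i\in E$. Hence \Cref{mini_uniq} guarantees existence and uniqueness of the minimal solution, and \Cref{con_mini_alge} identifies it with the vector of first hitting-time expectations.

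There is essentially no obstacle here: the mathematical content is entirely contained in \Cref{con_mini_alge}, and the only point requiring care is the bookkeeping of the index $n$ together with the base case $x^{(0)}\equiv 1$ — one should simply check that specializing ``$\ell = 1$'' leaves exactly the $n=0$ system and not, say, an off-by-one artifact, after which the conclusion is immediate.
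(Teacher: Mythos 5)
Your proposal is correct and coincides with the paper's own derivation: the paper obtains \Cref{con_min1} precisely by specializing \Cref{con_mini_alge} to $\ell=1$, where the family reduces to the single $n=0$ system with $x^{(0)}\equiv 1$. Your additional check that the equation fits the minimal-solution framework of \Cref{sec_mini_soln} is harmless but not needed, since \Cref{con_mini_alge} already asserts minimality in that framework.
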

        \begin{theorem}[\mbox{\cite[Theorem 4.48]{chen2004}}]\label{con_mini_exp}
            For a non-empty finite subset $H$ of $E$ and positive $\lambda$ with $\lambda<q_i$ for all $i\in E$, set
            \begin{displaymath}
                e_{iH}(\lambda)\coloneqq  \frac{1}{\lambda}\bigl(\E_i\mkern-1.5mu\ue^{\lambda\sigma_H}-1\bigr) = \int_0^\infty \ue^{\lambda t}\Psub{i}[\sigma_H>t]\df{t}
            \end{displaymath}
            for each $i\in E$ (cf.\@ \cite[Page 148, Equivalence of Theorems 4.45 and 4.44]{chen2004}). Then
            $\bigl(e_{iH}(\lambda)\bigr)_{i\in E}$ is the minimal solution to
            \begin{displaymath}
                x_i=\frac{q_i}{q_i-\lambda}\sum_{j\notin H}\Pi_{ij}x_j+\frac{1}{q_i-\lambda},\qquad i\in E.
            \end{displaymath}
        \end{theorem}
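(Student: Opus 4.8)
\noindent\emph{Proof idea.} The plan is to realize $\bigl(e_{iH}(\lambda)\bigr)_{i\in E}$ as the limit of the successive approximations that converge to the minimal solution of the stated equation, and to identify the $n$-th approximation probabilistically as the part of $e_{iH}(\lambda)$ contributed by trajectories that have jumped at most $n$ times. First I would recast the equation in the form of \Cref{mini_eq} by setting
\[
  (Af)_i=\frac{q_i}{q_i-\lambda}\sum_{j\notin H}\Pi_{ij}f_j,\qquad g_i=\frac{1}{q_i-\lambda}\qquad(i\in E),
\]
which are well defined because $\lambda<q_i$ for all $i$; one checks $A0=0$, additivity and positive homogeneity, and $f_n\uparrow f\Rightarrow Af_n\uparrow Af$ by monotone convergence, so $A\in\mathscr{A}$, $g\in\mathscr{H}$, and by \Cref{mini_uniq} the equation $x=Ax+g$ has a unique minimal solution $f^{*}$. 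A standard argument produces $f^{*}$ explicitly: with $f^{(0)}\equiv0$ and $f^{(n+1)}=Af^{(n)}+g$, monotonicity gives that $f^{(n)}$ increases to a limit $f^{*}$, passing to the limit with $A\in\mathscr{A}$ shows $f^{*}=Af^{*}+g$, and every solution $\widetilde f$ satisfies $\widetilde f\geqslant f^{(n)}$ for all $n$ by induction ($\widetilde f=A\widetilde f+g\geqslant Af^{(n)}+g=f^{(n+1)}$), hence $\widetilde f\geqslant f^{*}$.

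The substantive step is to match $f^{(n)}$ with the truncated transform
\[
  e^{(n)}_{iH}(\lambda)\coloneqq\int_0^\infty \ue^{\lambda t}\,\Psub{i}[\sigma_H>t,\,J_n>t]\df{t},
\]
where $J_n$ denotes the $n$-th jump time of the $Q$-process ($J_0=0$, $J_1=\eta_1$). Since $Q$ is regular, $J_n\uparrow\infty$ $\Psub{i}$-a.s., so $\{\sigma_H>t,\,J_n>t\}\uparrow\{\sigma_H>t\}$, and monotone convergence together with the identity $\ue^{\lambda\sigma_H}-1=\lambda\int_0^{\sigma_H}\ue^{\lambda t}\df{t}$ and Fubini give $e^{(n)}_{iH}(\lambda)\uparrow\int_0^\infty\ue^{\lambda t}\Psub{i}[\sigma_H>t]\df{t}=e_{iH}(\lambda)$. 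I would then prove $f^{(n)}=\bigl(e^{(n)}_{iH}(\lambda)\bigr)_{i\in E}$ by induction on $n$: the case $n=0$ is trivial (both sides vanish, as $J_0=0$), and $n=1$ follows from $\{\sigma_H>t,\,\eta_1>t\}=\{\eta_1>t\}$, which gives $e^{(1)}_{iH}(\lambda)=\int_0^\infty\ue^{(\lambda-q_i)t}\df{t}=g_i$. For the inductive step, decompose $\{\sigma_H>t,\,J_{n+1}>t\}$ at the first jump: on $\{\eta_1>t\}$ it equals $\{\eta_1>t\}$; on $\{\eta_1\leqslant t,\,X_{\eta_1}=j\}$ it is empty when $j\in H$ (then $\sigma_H=\eta_1\leqslant t$) and, when $j\notin H$, reduces by the strong Markov property at $\eta_1$ to $\{\sigma_H\circ\theta_{\eta_1}>t-\eta_1,\,J_n\circ\theta_{\eta_1}>t-\eta_1\}$. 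Using the independence of $\eta_1\sim\mathrm{Exp}(q_i)$ from $X_{\eta_1}\sim\Pi_{i\cdot}$, Fubini, and the substitution $t=s+r$ with $s$ the value of $\eta_1$, the double integral factorizes and one obtains
\[
  e^{(n+1)}_{iH}(\lambda)=g_i+\Bigl(\int_0^\infty q_i\ue^{-(q_i-\lambda)s}\df{s}\Bigr)\sum_{j\notin H}\Pi_{ij}e^{(n)}_{jH}(\lambda)=g_i+\frac{q_i}{q_i-\lambda}\sum_{j\notin H}\Pi_{ij}e^{(n)}_{jH}(\lambda),
\]
which is exactly $f^{(n+1)}=Af^{(n)}+g$ under the inductive hypothesis. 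Letting $n\to\infty$ then identifies $f^{*}$ with $\bigl(e_{iH}(\lambda)\bigr)_{i\in E}$, as desired.

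I expect the main obstacle to be the inductive step: cleanly splitting $\{\sigma_H>t,\,J_{n+1}>t\}$ at the first jump, justifying the use of the strong Markov property and the interchange of integrals, and keeping track of the fact that a trajectory jumping directly into $H$ at time $\eta_1$ contributes nothing — this is precisely why the sum runs over $j\notin H$ and why the identity holds for every $i\in E$, boundary states $i\in H$ included. Regularity of $Q$ enters only to guarantee $J_n\uparrow\infty$ a.s., and the hypothesis $\lambda<q_i$ only to keep $\int_0^\infty\ue^{(\lambda-q_i)t}\df{t}$ finite; no finiteness of $\E_i\ue^{\lambda\sigma_H}$ is assumed, so the whole argument takes place inside $\mathscr{H}=\{f\colon E\to\overline{\R}_+\}$ and remains valid when $e_{iH}(\lambda)=+\infty$.
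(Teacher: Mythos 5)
Your proof is correct, but note that the paper itself offers no proof of this statement: it is imported verbatim from Chen (2004), Theorem 4.48, with only a pointer to the equivalence of the two expressions for $e_{iH}(\lambda)$. Your argument --- recasting the system as $x=Ax+g$, generating the minimal solution by the iteration $f^{(0)}=0$, $f^{(n+1)}=Af^{(n)}+g$, and identifying $f^{(n)}_i$ with $\int_0^\infty \ue^{\lambda t}\Psub{i}[\sigma_H>t,\,J_n>t]\df{t}$ via first-jump decomposition and Tonelli --- is the standard proof of this result and is sound in all its steps, including the correct handling of the case $X_{\eta_1}\in H$ and the use of regularity only to get $J_n\uparrow\infty$ so that the truncations increase to $e_{iH}(\lambda)$.
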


    To prove the criteria, we may assume $E=\{0, 1, 2, \ldots\}$ and $H=\{0\}$ without loss of generality. Since the proofs for discrete-time Markov Chains are similar with those for continuous-time Chains, we only give the proofs in time-continuous setup. One may easily prove time-discrete results using similar technic.
    \par
    Before proceeding further, let's briefly describe the main points in our proofs. Take non-ergodicity for instance. In order to prove the expectation of return time to the state $0$ is infinity, we first get a lower bound for the expectation of return time. Then a sequence of increasing lower bound implies the desired result.
    On another hand, finite approximation method would guarantee existence of an increasing sequence of lower bound and therefore necessity of our conditions.
    \par

	\subsection{Lower Bound for Polynomial Moments and Sufficiency}\label{sec_suffi}
        \begin{theorem}\label{dmc}
            Let $P=(P_{ij})$ be an irreducible conservative transition matrix on $E$. Then the chain is transient iff the inequality
                \begin{displaymath}
                    \sum_{j\geqslant 0}P_{ij} z_j\leqslant z_i,\qquad i\geqslant 1
                \end{displaymath}
            has a solution $z=(z_i)_{i\geqslant 0}$ satisfying
                \begin{displaymath}
                    -\infty<\inf_{i\geqslant 0} z_i <z_0.
                \end{displaymath}
        \end{theorem}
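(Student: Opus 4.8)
The plan is to recognise the stated inequality as a \emph{controlling inequality} for the system satisfied by the hitting probabilities of the state $0$, and then to apply the comparison principle. First I would fix $\tau_0=\inf\{n\ge 0:X_n=0\}$ and put $v_i=\mathbf{P}_i(\tau_0<\infty)$, so $v_0=1$; a routine finite-approximation argument (the iterates $\mathbf{P}_i(\tau_0\le n)$ increase to $v_i$, and every non-negative solution dominates each iterate by induction on the first step) shows that $(v_i)_{i\ge 1}$ is the minimal non-negative solution of $x_i=\sum_{j\ge 1}P_{ij}x_j+P_{i0}$, $i\ge 1$ --- the analogue of \Cref{con_min1} for hitting probabilities rather than mean hitting times. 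I would also record the elementary equivalence: by irreducibility and conservativeness the chain is transient iff $v_i<1$ for some (hence every) $i\ne 0$, since $\mathbf{P}_0(\text{return to }0)=\sum_j P_{0j}v_j$ equals $1$ precisely when $v_j\equiv 1$ on the support of $P_{0\cdot}$.

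For the ``only if'' direction I would simply exhibit $z=v$ with $z_0=1$: conservativeness gives $\sum_{j\ge 0}P_{ij}z_j=P_{i0}+\sum_{j\ge 1}P_{ij}v_j=v_i=z_i$ for $i\ge 1$, so $z$ solves the inequality, and choosing $i_0\ne 0$ with $v_{i_0}<1$ yields $-\infty<0\le\inf_i z_i\le v_{i_0}<1=z_0$. For the ``if'' direction, given such a $z$ I would first normalise: since $\sum_j P_{ij}=1$, subtracting the constant $\inf_k z_k$ preserves the inequality, so I may assume $z\ge 0$ with $\inf_i z_i=0$, whence $z_0>0$. Then for $i\ge 1$, $z_i\ge\sum_{j\ge 0}P_{ij}z_j=z_0P_{i0}+\sum_{j\ge 1}P_{ij}z_j$, so $(z_i)_{i\ge 1}\in\mathscr{H}$ solves the controlling inequality of the system $x_i=z_0P_{i0}+\sum_{j\ge 1}P_{ij}x_j$, $i\ge 1$, whose minimal solution is $(z_0 v_i)_{i\ge 1}$ by the cone property of $m_A$ (\Cref{mini_apprx_org}). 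The comparison principle (\Cref{mini_cmprs}) then gives $z_i\ge z_0 v_i$ for all $i\ge 1$; were the chain recurrent we would have $v_i\equiv 1$, forcing $\inf_i z_i\ge z_0>0$, a contradiction --- so the chain is transient.

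I expect the only genuine decisions to be the choice of test object and the normalisation. Testing against the hitting probability $v_i$ (rather than an escape probability $1-v_i$) is what makes the ``superharmonic off $0$'' inequality line up with a controlling inequality in the sense of \Cref{mini_cmprs}; and conservativeness must be used twice --- once to translate $z$ down to a non-negative function with infimum $0$, and once so that $(z_0 v_i)_{i\ge 1}$ is honestly the minimal solution of the translated system. Everything else --- the finite-approximation characterisation of $v$ and the reduction ``transient $\iff v_i<1$ somewhere'' (which is where irreducibility enters) --- is standard, so I do not anticipate a serious obstacle.
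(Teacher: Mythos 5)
Your proof is correct. Note that the paper itself does not prove this theorem: it explicitly defers to \cite[Theorem 4.25]{chen2004} and \cite[Proposition 1.3]{martin2016}. Your argument --- characterizing the hitting probabilities $(v_i)_{i\geqslant 1}$ as the minimal non-negative solution of $x_i=\sum_{j\geqslant 1}P_{ij}x_j+P_{i0}$, using conservativeness to subtract $\inf_k z_k$ and reduce to a non-negative controlling solution, and then invoking the comparison principle together with the cone property of $m_A$ to get $z_i\geqslant z_0v_i$ --- is precisely the standard route taken in those references and sits squarely inside the minimal-solution machinery the paper sets up in Section 2.1, so there is nothing to object to in substance. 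One small slip worth flagging: the parenthetical ``(hence every)'' in your reduction ``transient iff $v_i<1$ for some $i\neq 0$'' is false in general (an irreducible transient chain can have $v_i=1$ for some $i\neq 0$, e.g.\ a state with $P_{i0}=1$), but your proof only ever uses the ``some'' direction and its contrapositive ``recurrent $\Rightarrow v\equiv 1$'', so nothing breaks.
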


        As \Cref{dmc} is a slight modification of \cite[Theorem 4.25]{chen2004}, its proof would not be included here. One may also find a proof in \cite[Proposition 1.3]{martin2016}.

		\begin{lemma}\label{cmprs_a}
            Let $\ell$ be a non-negative integer and $Q$ an irreducible regular $Q$-matrix on $E$. Assume further inequality
				\begin{displaymath}
					y_i\leqslant\sum_{
                                    \begin{subarray}{c}
                                        j\geqslant 1\\
                                        j\neq i
                                    \end{subarray}}\frac{q_{ij}}{q_i}y_j+\frac{(\ell+1)}{q_i}\E_i\mkern-1.5mu\sigma_0^\ell,\qquad i\geqslant1
				\end{displaymath}
            has a finite solution $y=(y_i)_{i\geqslant 1}$ with $\sup_{i\geqslant1} y_i<\infty$. If the $Q$-process is $(\ell+1)$-ergodic, then we have
				\begin{displaymath}
					y_i\leqslant \E_i\mkern-1.5mu\sigma_0^{\ell+1}, \qquad i\geqslant 1.
				\end{displaymath}
		\end{lemma}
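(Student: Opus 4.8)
The plan is to rewrite the claim as a minimum principle for a $\Pi$-superharmonic function and then invoke \Cref{dmc}. Set $z_i\coloneqq\E_i\sigma_0^{\ell+1}-y_i$ for $i\geqslant1$ and $z_0\coloneqq0$, so that ``$y_i\leqslant\E_i\sigma_0^{\ell+1}$ for all $i\geqslant1$'' is equivalent to ``$z_i\geqslant0$ for all $i\geqslant0$''. Write $M\coloneqq\sup_{i\geqslant1}y_i$. If $M<0$ the claim is immediate since then $y_i<0\leqslant\E_i\sigma_0^{\ell+1}$, so assume $M\geqslant0$; then $z_i\geqslant-y_i\geqslant-M$ for $i\geqslant1$ and $z_0=0\geqslant-M$, so $z$ is real-valued and bounded below.

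Before comparing, I would record that $(\ell+1)$-ergodicity forces $\E_i\sigma_0^{\ell+1}<\infty$ at \emph{every} state, not just at $0$: decomposing the first excursion from $0$ at the first visit to $i$ and using the strong Markov property gives $\E_0\sigma_0^{\ell+1}\geqslant\Psub{0}[\sigma_i<\sigma_0]\cdot\E_i\sigma_0^{\ell+1}$, while $\Psub{0}[\sigma_i<\sigma_0]>0$ by irreducibility; in particular the process is also $\ell$-ergodic and $\E_i\sigma_0^{\ell}<\infty$ for all $i$. Then \Cref{con_mini_alge}, applied with exponent $\ell+1$ and $H=\{0\}$, says $(\E_i\sigma_0^{\ell+1})_{i\in E}$ solves, with all series finite,
\[
    \E_i\sigma_0^{\ell+1}=\sum_{\substack{j\geqslant1\\ j\neq i}}\frac{q_{ij}}{q_i}\E_j\sigma_0^{\ell+1}+\frac{\ell+1}{q_i}\E_i\sigma_0^{\ell},\qquad i\geqslant1.
\]
Subtracting the hypothesized inequality for $y$ from this identity (the $j$-series for $y$ is finite: it is $\leqslant M$ and, since $y$ solves the inequality, it cannot be $-\infty$), and using $z_0=0$ together with $\Pi_{ij}=q_{ij}/q_i$ for $j\neq i$, I obtain
\[
    z_i\ \geqslant\ \sum_{\substack{j\geqslant1\\ j\neq i}}\frac{q_{ij}}{q_i}z_j\ =\ \sum_{j\geqslant0}\Pi_{ij}z_j,\qquad i\geqslant1.
\]

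To finish, note that the embedded chain $\Pi$ is an irreducible conservative transition matrix on $E$, and that it is recurrent: since $\ell\geqslant0$, the $(\ell+1)$-ergodic $Q$-process is in particular ergodic, hence positive recurrent and a fortiori recurrent, and a $Q$-process is recurrent iff its jump chain is. Thus $\Pi$ is not transient, so by \Cref{dmc} there is no $z=(z_i)_{i\geqslant0}$ solving $\sum_{j\geqslant0}\Pi_{ij}z_j\leqslant z_i\ (i\geqslant1)$ with $-\infty<\inf_{i\geqslant0}z_i<z_0$. Our $z$ does solve that inequality and has $\inf_{i\geqslant0}z_i>-\infty$, so necessarily $\inf_{i\geqslant0}z_i\geqslant z_0=0$, that is $y_i\leqslant\E_i\sigma_0^{\ell+1}$ for every $i\geqslant1$.

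I expect the only genuinely delicate point to be the sign-and-finiteness bookkeeping: $y$ is merely bounded above, hence may be negative and even unbounded below, which is exactly why the comparison principle \Cref{mini_cmprs} for $\mathscr{H}$-valued (non-negative) solutions cannot be applied directly and one must route through the transience dichotomy of \Cref{dmc}; and one has to make sure $\E_i\sigma_0^{\ell+1}$ is finite at every state so that $z$ is an honest real-valued function. The identity from \Cref{con_mini_alge} and the recurrence of the jump chain are routine.
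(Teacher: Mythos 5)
Your proposal is correct and follows essentially the same route as the paper: the same change of variables $z_0=0$, $z_i=\E_i\mkern-1.5mu\sigma_0^{\ell+1}-y_i$, the same subtraction against the minimal-solution identity from \Cref{con_mini_alge}, and the same appeal to \Cref{dmc} applied to the recurrent embedded chain to force $z_i\geqslant z_0$. The extra bookkeeping you supply (finiteness of $\E_i\mkern-1.5mu\sigma_0^{\ell+1}$ at every state, boundedness below of $z$) is exactly the detail the paper leaves implicit.
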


		\begin{proof}
            Since the $Q$-process is $(\ell+1)$-ergodic, $(\E_i\mkern-1.5mu\sigma_0^{\ell+1})_{i\geqslant 1}$ is finite and is the minimal non-negative solution to
				\begin{displaymath}
					x_i=\sum_{
                                    \begin{subarray}{c}
                                        j\geqslant 1\\
                                        j\neq i
                                    \end{subarray}}\frac{q_{ij}}{q_i}x_j+\frac{(\ell+1)}{q_i}\E_i\mkern-1.5mu\sigma_0^\ell,\qquad i\geqslant 1.
				\end{displaymath}
			Set
				\begin{numcases}
					{z_i=}
					0, \nonumber &$i=0$, \\
					\E_i\mkern-1.5mu\sigma_0^{\ell+1}-y_i,\qquad  \nonumber&$i\geqslant 1$.
				\end{numcases}
			Then $(z_i)_{i\geqslant0}$ satisfies
				\begin{displaymath}
				\left\{ \begin{aligned}
						 \sum_{j\geqslant 0}\Pi_{ij} z_j &\leqslant z_i,\qquad i\geqslant 1, \\
								  \inf_{i\geqslant 0} z_i&>-\infty.
				\end{aligned} \right.
				\end{displaymath}
            The $Q$-process is recurrent by our assumption, so is its embedding chain. Applying \Cref{dmc} to the embedding chain $\Pi=(\Pi_{ij})$, we arrive at the conclusion that
					\begin{displaymath}
						z_i\geqslant z_0,\qquad i\geqslant 1.
					\end{displaymath}
			In other words,
				\begin{displaymath}
					y_i\leqslant  \E_i\mkern-1.5mu\sigma_0^{\ell+1},\qquad i\geqslant 1.\qedhere
				\end{displaymath}
		\end{proof}

        \begin{remark}\label{upbound}
            The hypothesis ``$\sup_{i\geqslant1} y_i<\infty$'' cannot be removed from \Cref{cmprs_a}. In fact, we consider an ergodic $Q$-process, then $(\E_i\mkern-1.5mu\sigma_0)_{i\geqslant 1}$ is the minimal non-negative solution to
                \begin{equation}\label{aux}
                    x_i=\sum_{
                                    \begin{subarray}{c}
                                        j\geqslant 1\\
                                        j\neq i
                                    \end{subarray}}\frac{q_{ij}}{q_i}x_j+\frac{1}{q_i},\qquad i\geqslant 1.
                \end{equation}
            On another hand, fix an arbitrary $\varepsilon>0$, if we take
                \begin{displaymath}\begin{split}
                    x_1'&= \E_1\mkern-1.5mu\sigma_0 + \varepsilon,\\
                    x_i'&= x_1' \sum_{k=0}^{i-1}F_k^{(0)}-\sum_{k=0}^{i-1}d_k^{(0)},\qquad i\geqslant 2,
                \end{split}\end{displaymath}
            then $(x_i')_{i\geqslant 1}$ also solves \Cref{aux} (cf.\@ \cite{chenzhang2014}). Because the $Q$-process is assumed to be ergodic and thus recurrent, we have $\sum_{k=0}^\infty F_k^{(0)}=\infty$
            (cf.\@ \cite{chen2004, chenzhang2014}). Note that
                \begin{displaymath}
                    x_i'-\E_i\mkern-1.5mu\sigma_0= \varepsilon \sum_{k=0}^{i-1} F_k^{(0)},
                \end{displaymath}
            we may conclude that $(x_i')_{i\geqslant 1}$ is unbounded. Meanwhile, we have
                \begin{displaymath}
					\E_i\mkern-1.5mu\sigma_0 < x_i', \qquad i\geqslant 1.
				\end{displaymath}
        This implies that the condition ``$\sup_{i\geqslant1} y_i<\infty$'' cannot be removed.
        \end{remark}
        \par
        It is straightforward to write the time-discrete analogue of \Cref{cmprs_a} and we shall omit its proof.
        \renewcommand{\thetheorem}{\ref{cmprs_a}$^\prime$}
        \addtocounter{theorem}{-1}
        \begin{lemma}\label{cmprs_a_dis}
            Let $\ell$ be a non-negative integer and $P$ an irreducible aperiodic transition matrix on $E$. Assume further inequality
                \begin{displaymath}
                    y_i\leqslant\sum_{j\geqslant 1}P_{ij}y_j+\E_i\mkern-1.5mu\sigma_0^\ell,\qquad i\geqslant1
                \end{displaymath}
            has a finite solution $y=(y_i)_{i\geqslant 1}$ with $\sup_{i\geqslant1} y_i<\infty$.  If the chain is $(\ell+1)$-ergodic, then we have
                \begin{displaymath}
                    y_i\leqslant \E_i\mkern-1.5mu\sigma_0^{\ell+1}, \qquad i\geqslant 1.
                \end{displaymath}
            \par \vspace{-1.2\baselineskip}
                \qed
        \end{lemma}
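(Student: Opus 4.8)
The plan is to run exactly the argument used for \Cref{cmprs_a}, with the embedding chain $\Pi$ replaced throughout by $P$ itself and the continuous-time first-step identity for the moments of $\sigma_0$ replaced by its discrete-time version. First, since $(\ell+1)$-ergodicity implies $\ell$-ergodicity and recurrence, the families $(\E_i\sigma_0^{\ell})_{i\geqslant1}$ and $(\E_i\sigma_0^{\ell+1})_{i\geqslant1}$ are all finite; and since $\sup_{i\geqslant1}y_i<\infty$, each sum $\sum_{j\geqslant1}P_{ij}y_j$ is $\leqslant\sup_k y_k<\infty$ and, by the hypothesised inequality together with $y$ being finite-valued, also $>-\infty$, hence a finite real number.

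The only point that is genuinely different from the continuous-time case is that $(\E_i\sigma_0^{\ell+1})_{i\geqslant1}$ need not \emph{solve} the equation obtained from the hypothesis by changing ``$\leqslant$'' to ``$=$''; what is true, and what I would establish next, is that it is a \emph{supersolution}:
\[
  \E_i\sigma_0^{\ell+1}\;\geqslant\;\sum_{j\geqslant1}P_{ij}\,\E_j\sigma_0^{\ell+1}+\E_i\sigma_0^{\ell},\qquad i\geqslant1.
\]
One-step conditioning gives this: on $\{X_0=i\geqslant1\}$ one has $\sigma_0=1+\xi$, where $\xi\equiv0$ on $\{X_1=0\}$ and $\xi$ has the law of $\sigma_0$ started from $X_1$ on $\{X_1\geqslant1\}$, so expanding $(1+\xi)^{\ell+1}$ and $(1+\xi)^{\ell}$ yields $\E_i\sigma_0^{\ell+1}=\sum_{j\geqslant1}P_{ij}\E_j\sigma_0^{\ell+1}+R_i$ with $R_i=\sum_{j\geqslant1}P_{ij}\bigl(\E_j(1+\sigma_0)^{\ell+1}-\E_j\sigma_0^{\ell+1}\bigr)+P_{i0}$ and $\E_i\sigma_0^{\ell}=\sum_{j\geqslant1}P_{ij}\E_j(1+\sigma_0)^{\ell}+P_{i0}$; the elementary inequality $(1+t)^{\ell+1}-t^{\ell+1}-(1+t)^{\ell}=t\bigl((1+t)^{\ell}-t^{\ell}\bigr)\geqslant0$ for $t\geqslant0$ then shows $R_i\geqslant\E_i\sigma_0^{\ell}$. (Equivalently, the supersolution property is immediate from the discrete-time analogue of \Cref{con_mini_alge}, in which the inhomogeneous term attached to the $(\ell+1)$-st moment dominates $\E_i\sigma_0^{\ell}$.)

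With the supersolution in hand the argument closes as in \Cref{cmprs_a}: put $z_0=0$ and $z_i=\E_i\sigma_0^{\ell+1}-y_i$ for $i\geqslant1$. Bounding $\sum_{j\geqslant1}P_{ij}\E_j\sigma_0^{\ell+1}$ from above by the supersolution inequality and $\sum_{j\geqslant1}P_{ij}y_j$ from below by the hypothesis (so $\sum_{j\geqslant1}P_{ij}y_j\geqslant y_i-\E_i\sigma_0^{\ell}$), the two occurrences of $\E_i\sigma_0^{\ell}$ cancel and one gets $\sum_{j\geqslant0}P_{ij}z_j\leqslant z_i$ for all $i\geqslant1$; moreover $z_i\geqslant-\sup_{k\geqslant1}y_k$ for $i\geqslant1$ and $z_0=0$, so $\inf_{i\geqslant0}z_i>-\infty$. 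Because the chain is recurrent, \Cref{dmc} applied to the irreducible conservative transition matrix $P$ forbids any solution of $\sum_{j\geqslant0}P_{ij}z_j\leqslant z_i$ $(i\geqslant1)$ with $-\infty<\inf_i z_i<z_0$; hence $\inf_{i\geqslant0}z_i\geqslant z_0=0$, i.e.\ $y_i\leqslant\E_i\sigma_0^{\ell+1}$ for every $i\geqslant1$. There is no real obstacle here---the paper rightly calls this ``straightforward''---; the one thing to watch is the supersolution step, and, as in \Cref{cmprs_a} and \Cref{upbound}, the hypothesis $\sup_i y_i<\infty$ is essential, both to keep the sums finite and to keep $z$ bounded below so that \Cref{dmc} can be invoked.
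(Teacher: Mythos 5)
Your proof is correct and follows the same overall route as the paper's proof of \Cref{cmprs_a} (which is all the paper offers here, since it declares the discrete-time analogue straightforward and omits it): form $z_0=0$, $z_i=\E_i\mkern-1.5mu\sigma_0^{\ell+1}-y_i$, check $\sum_{j\geqslant0}P_{ij}z_j\leqslant z_i$ and $\inf_i z_i>-\infty$, and invoke \Cref{dmc} together with recurrence. The one place where you genuinely add something is the supersolution step, and you are right that it is needed: a literal transcription of the continuous-time argument would assert that $(\E_i\mkern-1.5mu\sigma_0^{\ell+1})_{i\geqslant1}$ \emph{solves} $x_i=\sum_{j\geqslant1}P_{ij}x_j+\E_i\mkern-1.5mu\sigma_0^{\ell}$, which is false in discrete time for $\ell\geqslant1$ (e.g.\ for $\ell=1$ one-step analysis gives $\E_i\mkern-1.5mu\sigma_0^{2}=\sum_{j\geqslant1}P_{ij}\E_j\mkern-1.5mu\sigma_0^{2}+2\E_i\mkern-1.5mu\sigma_0-1$, with inhomogeneous term strictly larger than $\E_i\mkern-1.5mu\sigma_0$). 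Your one-step expansion and the inequality $(1+t)^{\ell+1}-t^{\ell+1}-(1+t)^{\ell}=t\bigl((1+t)^{\ell}-t^{\ell}\bigr)\geqslant0$ establish exactly the inequality the comparison argument needs, and the direction of the inequality is the right one for concluding $z_i\geqslant\sum_{j\geqslant1}P_{ij}z_j$. So the proposal is not a different method, but it correctly identifies and repairs the only non-mechanical point in the ``omitted'' proof.
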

        \renewcommand{\thetheorem}{\arabic{theorem}}
        \begin{proof}[Proof of sufficiency of $\Cref{in_aerg_con}$]
            If the $Q$-process is $(\ell+1)$-ergodic, by \Cref{con_mini_alge} and \Cref{cmprs_a}, for each $n\geqslant 1$,
                \begin{displaymath}
                    y^{(n)}_0\leqslant \sum_{
                                    \begin{subarray}{c}
                                        j\geqslant 1\\
                                    \end{subarray}}\frac{q_{0j}}{q_0}y^{(n)}_j+\frac{(\ell+1)}{q_i}\E_0\mkern-1.5mu\sigma_0^\ell \leqslant
                    \sum_{
                                    \begin{subarray}{c}
                                        j\geqslant 1\\
                                    \end{subarray}}\frac{q_{0j}}{q_0}\E_j\mkern-1.5mu\sigma_0^{\ell+1}+\frac{(\ell+1)}{q_i}\E_0\mkern-1.5mu\sigma_0^\ell=\E_0\mkern-1.5mu\sigma_0^{\ell+1}.
                \end{displaymath}
            It follows that
            \begin{displaymath}
                    \infty=\sup_{n\geqslant1}y^{(n)}_0 \leqslant \E_0\mkern-1.5mu\sigma_0^{\ell+1} <\infty,
                \end{displaymath}
            a contradiction.
        \end{proof}

        \begin{proof}[Proof of sufficiency of $\Cref{in_serg_con}$]
            It suffices to prove \Cref{in_serg_con} when the $Q$-process is ergodic. By \Cref{cmprs_a} with $\ell=0$, we have
                \begin{displaymath}
                    y^{(n)}_i\leqslant \E_i\mkern-1.5mu\sigma_0, \qquad i\geqslant 1,\enspace n\geqslant 1.
                \end{displaymath}
            Consequently,
            \begin{displaymath}
                    \infty=\sup_{n\geqslant1}\sup_{i\geqslant1}y^{(n)}_i\leqslant \sup_{i\geqslant1}\E_i\mkern-1.5mu\sigma_0.
                \end{displaymath}
            Thus the $Q$-process is non-strongly ergodic. Our proof is now complete.
        \end{proof}

    \subsection{Approximation for Polynomial Moments and Necessity}\label{sec_neces}
        Let $\ell$ be a fixed non-negative integer. To prove necessity of \Cref{in_aerg_con,in_serg_con}, we consider truncated equations for each $n\geqslant 1$:
            \addtocounter{equation}{1}
            \begin{equation}\label{apprxa_eq}
                x_i=\sum_{
                                    \begin{subarray}{c}
                                        1\leqslant j\leqslant n\\
                                        j\neq i
                                    \end{subarray}}\frac{q_{ij}}{q_i}x_j+\frac{(\ell+1)}{q_i}\E_i\mkern-1.5mu\sigma_0^\ell,\qquad 1\leqslant i\leqslant n.\tag{\theequation.n}
            \end{equation}
        Denote the minimal non-negative solution to \Cref{apprxa_eq} as
        \begin{displaymath}
            x^{(n)}=\bigl(x^{(n)}_i,\ 1\leqslant i\leqslant n\bigr).
        \end{displaymath}
        Also, we set $ M_n=\max_{1\leqslant i\leqslant n}x^{(n)}_i$.
		
        \begin{lemma}\label{apprxa}
            If the $Q$-process is $\ell$-erogdic, then we have the following assertions:
            \begin{enumerate}[\upshape (1)]
                \item $M_n$ is finite for each $n\geqslant 1$;\label{apprxa:1}
                \item $\E_i\mkern-1.5mu\sigma_0^{\ell+1}= \lim_{n\to\infty} {\hskip -7pt} \uparrow x^{(n)}_i$ for each $i\geqslant 1$, and $(M_n)_{n\geqslant1}$ is increasing;\label{apprxa:2}
                \item $(M_n)_{n\geqslant1}$ is bounded iff\/ $(\E_i\mkern-1.5mu\sigma_0^{\ell+1})_{i\geqslant 1}$ is bounded;\label{apprxa:3}
                \item pick $\ell=0$, then it follows from \eqref{apprxa:3} that the $Q$-process is non-strongly ergodic iff\/ $\sup_{n\geqslant1} M_n=\infty$.\label{apprxa:4}
            \end{enumerate}
        \end{lemma}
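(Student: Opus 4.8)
The plan is to prove the four assertions in order, with (3) and (4) following formally from (2) and the probabilistic description of strong ergodicity recalled in \Cref{chp_intro}. Two standing facts will be used freely: the $Q$-process, hence its embedding chain $\Pi$, is recurrent and irreducible; and $\ell$-ergodicity forces $\E_i\sigma_0^\ell<\infty$ for \emph{every} $i\ge1$ (a one-step/path argument starting from $\E_0\sigma_0^\ell<\infty$). For (1), fix $n$ and regard \eqref{apprxa_eq} as the finite linear system $x=A_nx+g^{(n)}$ on $\{1,\dots,n\}$, with $A_n=(\Pi_{ij})_{1\le i,j\le n}$ substochastic and $g^{(n)}_i=\tfrac{\ell+1}{q_i}\E_i\sigma_0^\ell<\infty$. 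Since $\Pi$ is recurrent, irreducible and $0\notin\{1,\dots,n\}$, from any $i\le n$ the embedded chain leaves the finite set $\{1,\dots,n\}$ almost surely; hence $A_n$ has spectral radius strictly less than $1$, $(I-A_n)^{-1}=\sum_{k\ge0}A_n^k$ has finite nonnegative entries, and the minimal nonnegative solution $x^{(n)}=(I-A_n)^{-1}g^{(n)}$ is finite, so $M_n<\infty$. (Equivalently, $x^{(n)}_i=\E_i\bigl[\sum_{k<\tau_n}g_{Y_k}\bigr]$ with $(Y_k)$ the embedded chain and $\tau_n$ its exit time from $\{1,\dots,n\}$; here $\E_i\tau_n<\infty$ and $g$ is bounded on the finite set $\{1,\dots,n\}$.)

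For (2), I would recognize the truncated systems \eqref{apprxa_eq} as a finite approximation of the minimal-solution equation for $(\E_i\sigma_0^{\ell+1})$. On $\overline{\R}_+$-valued functions over $\{1,2,\dots\}$, let $A^{(n)}f(i)$ equal $\sum_{1\le j\le n}\Pi_{ij}f(j)$ for $i\le n$ and $0$ for $i>n$, and let $g^{(n)}_i$ equal $\tfrac{\ell+1}{q_i}\E_i\sigma_0^\ell$ for $i\le n$ and $0$ for $i>n$. One checks $A^{(n)}\in\mathscr A$, $A^{(n)}\uparrow A$ and $g^{(n)}\uparrow g$, where $Af(i)=\sum_{j\ge1}\Pi_{ij}f(j)$ and $g_i=\tfrac{\ell+1}{q_i}\E_i\sigma_0^\ell$, and that any solution of $f=A^{(n)}f+g^{(n)}$ vanishes off $\{1,\dots,n\}$ and restricts there to a solution of \eqref{apprxa_eq}, so that $m_{A^{(n)}}g^{(n)}$ agrees with $x^{(n)}$ on $\{1,\dots,n\}$. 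Applying \Cref{con_mini_alge} with $\ell+1$ in place of $\ell$ (its last system, $H=\{0\}$, whose $i=0$ equation decouples from the rest), the minimal solution $m_Ag$ equals $(\E_i\sigma_0^{\ell+1})_{i\ge1}$, and \Cref{mini_apprx_org} gives $m_{A^{(n)}}g^{(n)}\uparrow m_Ag$. Hence, for each fixed $i$ and all $n\ge i$, $x^{(n)}_i\uparrow\E_i\sigma_0^{\ell+1}$; and since $x^{(n+1)}_i\ge x^{(n)}_i$ for $i\le n$ we get $M_{n+1}\ge\max_{1\le i\le n}x^{(n+1)}_i\ge M_n$, so $(M_n)$ is increasing.

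Assertion (3) is then immediate: if $\sup_{i\ge1}\E_i\sigma_0^{\ell+1}=C<\infty$ then $M_n\le\max_{1\le i\le n}\E_i\sigma_0^{\ell+1}\le C$ for all $n$, while $\sup_nM_n=C<\infty$ forces $\E_i\sigma_0^{\ell+1}=\sup_{n\ge i}x^{(n)}_i\le C$ for all $i$. For (4) take $\ell=0$, so that the hypothesis reduces to recurrence (already assumed) and (3) says that $(M_n)$ is bounded if and only if $(\E_i\sigma_0)_{i\ge1}$ is bounded; combining this with the probabilistic criterion of \Cref{chp_intro} — the $Q$-process is strongly ergodic iff $(\E_i\sigma_0)_{i\ge1}$ is bounded — and using that $(M_n)$ increases, non-strong ergodicity is equivalent to $\sup_nM_n=\infty$. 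The substantive points are the finiteness/subcriticality argument in (1) and the correct choice of the approximating cone maps in (2) (so that simultaneously $A^{(n)}\uparrow A$, $m_{A^{(n)}}g^{(n)}$ restricts to $x^{(n)}$, and $m_Ag$ is identified with $(\E_i\sigma_0^{\ell+1})_{i\ge1}$ via \Cref{con_mini_alge}); granting these, (3) and (4) are purely formal.
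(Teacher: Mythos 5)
Your proof is correct, and for assertions (2)--(4) it follows essentially the same route as the paper: identify the truncated systems \eqref{apprxa_eq} with minimal solutions $m_{A^{(n)}}g^{(n)}$ of approximating cone equations, invoke \Cref{mini_apprx_org} together with \Cref{con_mini_alge} to get $x^{(n)}_i\uparrow\E_i\sigma_0^{\ell+1}$, and read off (3) and (4) from the probabilistic characterization of strong ergodicity (the paper dismisses these last two steps as ``trivial manipulation''; your explicit two-line verification is exactly what is meant). The one place where you genuinely diverge is assertion (1). The paper rescales by $C_n=(\ell+1)\max_{1\leqslant i\leqslant n}\E_i\sigma_0^\ell+1$ so that the inhomogeneous term is dominated by $C_n/q_i$, recognizes the resulting system as the mean-return-time equation (\Cref{con_min1}) for the finite-state truncated matrix $Q^{(n)}$ — whose minimal solution is finite because a finite irreducible chain is ergodic — and concludes by the comparison principle \Cref{mini_cmprs}. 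You instead argue directly that the substochastic matrix $A_n=(\Pi_{ij})_{1\leqslant i,j\leqslant n}$ has spectral radius strictly less than $1$ (since irreducibility forces every state to leak toward $0\notin\{1,\dots,n\}$), so $x^{(n)}=(I-A_n)^{-1}g^{(n)}=\sum_{k\geqslant0}A_n^kg^{(n)}$ is finite. Both arguments are sound; yours is more elementary and self-contained (no appeal to \Cref{con_min1} or to the auxiliary process $Q^{(n)}$), while the paper's stays entirely inside the minimal-solution/comparison framework it has set up and reuses $Q^{(N)}$ later in the proof of \Cref{in_eerg_con}. One small point worth making explicit either way: the finiteness of $\E_i\sigma_0^\ell$ for \emph{all} $i\geqslant1$ (not just $i=0$) is needed for $g^{(n)}$, respectively $C_n$, to be finite; your parenthetical one-step argument via the minimal-solution equation and irreducibility covers this, and the paper leaves it implicit.
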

		
        \begin{proof}
            a) Since the $Q$-process is $\ell$-ergodic, we may pick a positive constant
                    \begin{displaymath}
                        C_n = (\ell+1) \max_{1\leqslant i\leqslant n}\E_i\mkern-1.5mu\sigma_0^\ell+1.
                    \end{displaymath}
                Now consider inequality
                    \begin{displaymath}
                        x_i \geqslant \sum_{
                                    \begin{subarray}{c}
                                        1\leqslant j\leqslant n\\
                                        j\neq i
                                    \end{subarray}}\frac{q_{ij}}{q_i}x_j+\frac{C_n}{q_i},\qquad 1\leqslant i\leqslant n.
                    \end{displaymath}
                Introducing a change of variable $\widetilde{x}_i=\frac{x_i}{C_n}$, we have the following equivalent form of the above inequality:
                    \addtocounter{equation}{1}
                    \begin{equation}\label{apprxa_eq_aux}\tag{\theequation.n}
                        \widetilde{x}_i \geqslant  \sum_{
                                    \begin{subarray}{c}
                                        1\leqslant j\leqslant n\\
                                        j\neq i
                                    \end{subarray}}\frac{q_{ij}}{q_i}\widetilde{x}_j+\frac{1}{q_i},\qquad 1\leqslant i\leqslant n.
                    \end{equation}
                By \Cref{con_min1}, the minimal solution to \Cref{apprxa_eq_aux} is the expectation of return time to state 0 of the $Q^{(n)}$-process and is therefore finite, where $Q^{(n)}$ has the following form:
                        \begin{displaymath}       
                            Q^{(n)}=
                            \left(                 
                              \begin{array}{ccccc}   
                                -n\quad &{1\ \ } &{1\ \ } &\cdots &{1}\\  
                                q_{10}+\sum_{k=n+1}^{\infty}q_{1,k}\quad &{q_{11}\ } &{q_{12} } &\cdots &{q_{1n}}\\
                                \vdots & \vdots & \vdots &\vdots &\vdots\\
                                q_{n0}+\sum_{k=n+1}^{\infty}q_{n,k}\quad &{q_{n1}\ } &{q_{n2}\ } &\cdots &{q_{nn}}\\
                              \end{array}
                            \right)_{(n+1)\times(n+1).}               
                        \end{displaymath}
                Now by \Cref{mini_cmprs}, $M_n$ is finite.
            \par
            b) By \Cref{con_mini_alge}, $\bigl(\E_i\mkern-1.5mu\sigma_0^{\ell+1}\bigr)_{i\geqslant 1}$ is the minimal solution to
                \begin{displaymath}
                    x_i=\sum_{
                                    \begin{subarray}{c}
                                        1\leqslant j\leqslant n\\
                                        j\neq i
                                    \end{subarray}}\frac{q_{ij}}{q_i}x_j+\frac{(\ell+1)}{q_i}\E_i\mkern-1.5mu\sigma_0^\ell,\qquad i\geqslant 1.
                \end{displaymath}
            Exploiting \Cref{mini_apprx_org}, we obtain the second assertion.
            \par
            c) Some trivial manipulation leads to the other two assertions. We omit the details.
        \end{proof}

        \begin{proof}[Proof of necessity of $\Cref{in_aerg_con}$]
            Suppose the $Q$-process is not $(\ell+1)$-ergodic. Set
                \begin{numcases}
                    {y^{(n)}_i=}
                    \sum_{
                                    \begin{subarray}{c}
                                        j\geqslant 1\\
                                    \end{subarray}}\frac{q_{0j}}{q_0}x^{(n)}_j+\frac{(\ell+1)}{q_0}\E_0\mkern-1.5mu\sigma_0^\ell, \quad\nonumber &$i=0$, \\
                    x^{(n)}_i, \nonumber &$1\leqslant i \leqslant n$, \\
                    0, &$i\geqslant n+1$.\nonumber
                \end{numcases}
            By the monotone convergence theorem and \Cref{con_mini_alge},
                \begin{displaymath}\begin{split}
                    \lim_{n\to \infty} y^{(n)}_0 &= \lim_{n\to \infty} \sum_{
                                    \begin{subarray}{c}
                                        j\geqslant 1\\
                                    \end{subarray}}\frac{q_{0j}}{q_0}y^{(n)}_j+\frac{(\ell+1)}{q_0}\E_0\mkern-1.5mu\sigma_0^\ell\\
                    &=
                    \sum_{
                                    \begin{subarray}{c}
                                        j\geqslant 1\\
                                    \end{subarray}}\frac{q_{0j}}{q_0}\E_j\mkern-1.5mu\sigma_0^{\ell+1}+\frac{(\ell+1)}{q_0}\E_0\mkern-1.5mu\sigma_0^\ell\\
                    &=\E_0\mkern-1.5mu\sigma_0^{\ell+1} =\infty.
                \end{split}\end{displaymath}
            Now it is easy to check that $\{y^{(n)}\}^{\infty}_{n=1}$ with $y^{(n)}=\bigl(y^{(n)}_i\bigr)_{i\in E}$ is a required sequence. Necessity of \Cref{in_aerg_con} is proved.
        \end{proof}

        \begin{proof}[Proof of necessity of $\Cref{in_serg_con}$]
            Assume the $Q$-process is non-strongly ergodic. We pick $\ell=0$ in \Cref{apprxa} and set
                \begin{numcases}
                    {y^{(n)}_i=}
                    x^{(n)}_i, \quad\nonumber &$1\leqslant i \leqslant n$, \\
                    0, &$i\geqslant n+1$.\nonumber
                \end{numcases}
            Then $\{y^{(n)}\}^{\infty}_{n=1}$ is a sequence required in \Cref{in_serg_con}. In fact, we may easily deduce that for each $n\geqslant 1$, $y^{(n)}$ solves \Cref{in_serg_con_eq}. Meanwhile, for each $n\geqslant 1$, we have
            \begin{displaymath}
                \sup_{i\geqslant1} y^{(n)}_i=M_n<\infty.
            \end{displaymath}
            By the last assertion of \Cref{apprxa}, $\sup_{n\geqslant1} M_n=\infty$. Therefore,
            \begin{displaymath}
                \sup_{n\geqslant 1} \sup_{i\geqslant1} y^{(n)}_i =\sup_{n\geqslant 1}M_n=\infty.
            \end{displaymath}
        Hence we prove necessity of \Cref{in_serg_con}.
        \end{proof}
		
    \subsection{Proof of \Cref{in_eerg_con}}\label{sec_eergpf}
        Now, we prove \Cref{in_eerg_con}, non-exponential ergodicity criteria. Since we are discussing exponential ergodicity in this subsection, we assume the process is ergodic without loss of generality. Our idea for proof of \Cref{in_eerg_con} is similar with that of \Cref{in_aerg_con,in_serg_con} but technical details here are different and more complex. Briefly speaking, we first use \Cref{mini_ctrl_in} to get a lower bound for exponential moment of return time. On another hand, we use finite approximation to prove the necessity.
        \par
        First, using the notation in \Cref{sec_mini_soln}, we have the following two useful lemmas.
        \begin{theorem}[\mbox{\cite[Theorem 2.10]{chen2004}}]\label{mini_iter}
            Given an arbitrary non-negative $\widetilde{f}^{(0)}$ satisfying $0\leqslant \widetilde{f}^{(0)} \leqslant pf^*$ for some non-negative number $p$, set
            \begin{displaymath}
                    \widetilde{f}^{(n+1)} = A\widetilde{f}^{(n)}+g,\qquad n\geqslant 0.
            \end{displaymath}
            Then we have $\widetilde{f}^{(n)} \to f^*\,(n\to\infty)$.
        \end{theorem}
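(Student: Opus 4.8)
The plan is to trap the iterates $\widetilde{f}^{(n)}$ between two sequences that both converge to $f^{*}$, and then conclude by a pointwise squeeze. First I would recall the concrete construction underlying \Cref{mini_uniq}: operators in $\mathscr{A}$ are monotone, so the Picard iteration $f_0\equiv 0$, $f_{n+1}=Af_n+g$ is increasing in $\mathscr{H}$; its limit $f^{*}=\lim_{n\to\infty} f_n$ (an increasing limit) solves $f=Af+g$ by the increasing-limit continuity built into the definition of $\mathscr{A}$, and it is the minimal solution since any solution $h$ satisfies $h\geqslant 0=f_0$ and hence $h\geqslant f_n$ for all $n$ by induction. Because $A$ is a cone mapping, one also gets $f_n=g+Ag+\dots+A^{n-1}g$ by induction.

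\emph{Lower bound.} Since $\widetilde{f}^{(0)}\geqslant 0=f_0$, monotonicity of $A$ gives $\widetilde{f}^{(n)}\geqslant f_n$ for every $n$ by induction, so $\widetilde{f}^{(n)}\geqslant f_n\to f^{*}$. In particular, at any state $x$ with $f^{*}(x)=+\infty$ we already have $\widetilde{f}^{(n)}(x)\to\infty=f^{*}(x)$, and it remains only to bound $\widetilde{f}^{(n)}$ from above at states where $f^{*}$ is finite.

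\emph{Upper bound.} One may assume $p\geqslant 1$ (if $p<1$ then $\widetilde{f}^{(0)}\leqslant pf^{*}\leqslant f^{*}$, so replace $p$ by $1$). Set $\widehat{f}^{(0)}=pf^{*}$ and $\widehat{f}^{(n+1)}=A\widehat{f}^{(n)}+g$; then $\widetilde{f}^{(0)}\leqslant\widehat{f}^{(0)}$ gives $\widetilde{f}^{(n)}\leqslant\widehat{f}^{(n)}$ for all $n$ by monotonicity. The crucial step is the identity $\widehat{f}^{(n)}+(p-1)f_n=pf^{*}$, valid for every $n\geqslant 0$: it holds at $n=0$, and applying the cone mapping $A$ to both sides and then adding $pg=g+(p-1)g$ carries the $n$-th identity to the $(n+1)$-th, using $Af^{*}+g=f^{*}$; since only addition, non-negative scaling and $A$ occur, this is legitimate in $\overline{\R}_+$ even where some terms equal $+\infty$. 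At a state $x$ with $f^{*}(x)<\infty$ the identity gives $\widehat{f}^{(n)}(x)=pf^{*}(x)-(p-1)f_n(x)$, which decreases to $f^{*}(x)$ as $f_n(x)\uparrow f^{*}(x)$. Thus $f_n(x)\leqslant\widetilde{f}^{(n)}(x)\leqslant\widehat{f}^{(n)}(x)$ with both ends tending to $f^{*}(x)$, and the squeeze yields $\widetilde{f}^{(n)}(x)\to f^{*}(x)$; combined with the case $f^{*}(x)=\infty$, this proves $\widetilde{f}^{(n)}\to f^{*}$.

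The main obstacle is exactly that one cannot finish by taking limits directly in $\widehat{f}^{(n+1)}=A\widehat{f}^{(n)}+g$: the sequence $\widehat{f}^{(n)}$ is \emph{decreasing}, while the only continuity available for $A\in\mathscr{A}$ concerns \emph{increasing} limits, so the value of $\lim_{n\to\infty}\widehat{f}^{(n)}$ has to be pinned down by the explicit algebraic identity above. (A fixed-point/comparison route, such as combining $\lim_{n}\downarrow\widehat{f}^{(n)}\geqslant A\bigl(\lim_{n}\downarrow\widehat{f}^{(n)}\bigr)+g$ with \Cref{mini_cmprs}, only produces the inequality $\lim_n\widehat{f}^{(n)}\geqslant f^{*}$, which we already know, and does not give the needed upper estimate.) It is also worth pointing out where the hypothesis $\widetilde{f}^{(0)}\leqslant pf^{*}$ enters: it forces $\widetilde{f}^{(0)}$ to vanish wherever $f^{*}$ vanishes, which is what rules out spurious contributions from homogeneous solutions of $f=Af$ persisting in the limit.
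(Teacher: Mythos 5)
The paper does not actually prove this statement: it is quoted as \cite[Theorem 2.10]{chen2004} and used as a black box, so there is no in-paper argument to compare against. Your proof is correct and is essentially the standard one from Chen's book: the lower bound $\widetilde{f}^{(n)}\geqslant f_n$ via the zero-started Picard iterates, and the upper bound via the identity $\widehat{f}^{(n)}+(p-1)f_n=pf^*$ (equivalently $\widetilde{f}^{(n)}\leqslant f_n+pA^nf^*$ with $A^nf^*=f^*-f_n$ wherever $f^*$ is finite), followed by a pointwise squeeze; your observation that one cannot pass to the limit in the decreasing sequence $\widehat{f}^{(n)}$ because $\mathscr{A}$ only guarantees continuity along increasing limits is exactly the right thing to worry about, and the algebraic identity resolves it cleanly. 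Two small caveats. First, both of your inductions silently use monotonicity of $A$ on $\mathscr{H}$; this is not literally among the axioms listed for $\mathscr{A}$ in \Cref{sec_mini_soln} (the cone $\mathscr{H}$ need not contain differences of comparable elements), but it is a standing fact of this framework, already needed for \Cref{mini_uniq} and \Cref{mini_cmprs}, so invoking it is fair --- just say so. Second, your closing remark is a bit loose: the hypothesis $\widetilde{f}^{(0)}\leqslant pf^*$ is used at \emph{every} point where $f^*$ is finite, not merely where $f^*$ vanishes; this does not affect the validity of the argument.
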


        \begin{lemma}\label{mini_ctrl_in}
            Let $f^*$ be the minimal solution to \Cref{mini_eq} and $\widetilde{f}$ be a non-negative function satisfying
                \begin{equation}\label{mini_ctrl_in_eq}
                    \widetilde{f} \leqslant A\widetilde{f}+ g ,\qquad x\in E.
                \end{equation}
            If $\widetilde{f}\leqslant pf^*$ for some non-negative number $p$, then $\widetilde{f}\leqslant f^*$.
        \end{lemma}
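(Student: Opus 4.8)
The plan is to realise $\widetilde f$ as the base point of the monotone iteration underlying \Cref{mini_iter}. Put $\widetilde f^{(0)} \coloneqq \widetilde f$ and define recursively $\widetilde f^{(n+1)} \coloneqq A\widetilde f^{(n)} + g$ for $n\geqslant 0$. Since $0 \leqslant \widetilde f^{(0)} \leqslant p f^*$ by assumption, \Cref{mini_iter} applies verbatim with this choice of $\widetilde f^{(0)}$ and yields $\widetilde f^{(n)} \to f^*$ as $n\to\infty$.

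The one thing left to check is that the iterates increase in $n$. The first step $\widetilde f^{(0)} \leqslant \widetilde f^{(1)}$ is exactly the hypothesis \Cref{mini_ctrl_in_eq}, namely $\widetilde f^{(1)} = A\widetilde f + g \geqslant \widetilde f = \widetilde f^{(0)}$. For the inductive step, if $\widetilde f^{(n-1)} \leqslant \widetilde f^{(n)}$, then applying $A$ — which is monotone, being a cone mapping in $\mathscr A$ (cf.\ \cite{chen2004}) — gives $A\widetilde f^{(n-1)} \leqslant A\widetilde f^{(n)}$, and adding $g$ yields $\widetilde f^{(n)} \leqslant \widetilde f^{(n+1)}$. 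Hence $\{\widetilde f^{(n)}\}_{n\geqslant 0}$ is non-decreasing, so $\widetilde f = \widetilde f^{(0)} \leqslant \lim_{n\to\infty}\widetilde f^{(n)} = f^*$, which is the assertion.

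I anticipate no serious obstacle here: the content of the lemma is really that it is the ``sub-solution'' mirror of the Comparison Principle (\Cref{mini_cmprs}), with the domination $\widetilde f \leqslant p f^*$ playing the indispensable role of forcing the increasing iteration to converge to $f^*$ rather than merely to some (possibly larger) solution of \Cref{mini_eq} — the same phenomenon that makes ``$\sup_{i} y_i<\infty$'' irremovable in \Cref{cmprs_a}, as \Cref{upbound} illustrates. If one prefers to avoid invoking \Cref{mini_iter}, the identical conclusion follows by iterating \Cref{mini_ctrl_in_eq} directly: $\widetilde f \leqslant A^n\widetilde f + \sum_{k=0}^{n-1}A^k g$ for every $n$, where $\sum_{k=0}^{n-1}A^k g \uparrow f^*$ (the minimal solution is the limit of the iteration started from $0$, i.e.\ \Cref{mini_iter} with $p=0$) and $A^n\widetilde f \leqslant p\,A^n f^* \to 0$ at every point where $f^*<\infty$ (the inequality being trivial where $f^*=\infty$), so that $\widetilde f \leqslant f^*$ again.
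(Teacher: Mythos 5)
Your proposal is correct and follows essentially the same route as the paper: both set $\widetilde f^{(0)}=\widetilde f$, iterate $\widetilde f^{(n+1)}=A\widetilde f^{(n)}+g$, invoke \Cref{mini_iter} for convergence to $f^*$, and establish monotonicity of the iterates by induction starting from \Cref{mini_ctrl_in_eq}. The only cosmetic difference is that the paper harmlessly assumes $p>1$ without loss of generality before applying \Cref{mini_iter}.
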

        \begin{proof}
            Assume $p>1$ without loss of generality. Define
                \begin{displaymath}\begin{split}
                    \widetilde{f}^{(0)}&=\widetilde{f},\\
                    \widetilde{f}^{(n+1)}&=A\widetilde{f}^{(n)}+g,\qquad n\geqslant 0.
                \end{split}\end{displaymath}
            We claim
                 \begin{displaymath}
                    \widetilde{f}^{(n)}\uparrow f^*,\qquad \text{as }n\to \infty.
                 \end{displaymath}
            In fact, by \Cref{mini_iter}, we have
                \begin{displaymath}
                    \widetilde{f}^{(n)}\to f^*,\qquad \text{as }n\to \infty.
                 \end{displaymath}
            So we need only show the monotonicity. According to \Cref{mini_ctrl_in_eq},
                \begin{displaymath}
                    \widetilde{f}^{(0)} \leqslant A\widetilde{f}^{(0)}+ g = \widetilde{f}^{(1)}.
                \end{displaymath}
            Now if $\widetilde{f}^{(n)} \leqslant \widetilde{f}^{(n+1)}$ for some $n\geqslant 0$, then
                \begin{displaymath}
                    \widetilde{f}^{(n+1)} = A\widetilde{f}^{(n)}+ g \leqslant A\widetilde{f}^{(n+1)}+ g=
                    \widetilde{f}^{(n+2)}.
                \end{displaymath}
            So the monotonicity holds by induction.
            It follows immediately that
                \begin{displaymath}
                    \widetilde{f}= \widetilde{f}^{(0)} \leqslant f^*.
                \end{displaymath}
            \Cref{mini_ctrl_in} is proved.
        \end{proof}

        \par
                Let $Q$ be a $Q$-matrix on $E$ with $\inf_{i\in E}q_i>0$. Fix an integer $N\geqslant 1$ and consider $Q$-matrix on finite states
                        \begin{displaymath}
                            Q^{(N)}=
                            \left(                 
                              \begin{array}{ccccc}   
                                -N\quad &{1\ } & {1\ } &\cdots &{1}\\  
                                q_{10}+\sum_{k=N+1}^{\infty}q_{1,k}\quad &{q_{11}\ } &{q_{12}\ } &\cdots &{q_{1N}}\\
                                \vdots & \vdots & \vdots &\vdots &\vdots\\
                                q_{N0}+\sum_{k=N+1}^{\infty}q_{N,k}\quad &{q_{N1}\ } &{q_{N2}\ } &\cdots &{q_{NN}}\\
                              \end{array}
                            \right)_{(N+1)\times(N+1).}               
                        \end{displaymath}
                Meanwhile, we consider the following equation for $\lambda\in\bigl(0, \inf_{i\in E}q_i\bigr)$:
                    \begin{equation}\label{eloc}
                        x_i=\frac{q_i}{q_i-\lambda}\sum_{
                                    \begin{subarray}{c}
                                        1\leqslant j\leqslant N\\
                                        j\neq i
                                    \end{subarray}}\frac{q_{ij}}{q_i}x_j + \frac{1}{q_i-\lambda}, \qquad 0\leqslant i \leqslant N.
                    \end{equation}
                Denote the minimal solution to \Cref{eloc} as $\bigl(x_i^{(\lambda,N)}, 0\leqslant i \leqslant N\bigr)$. Then by \Cref{mini_apprx_org}, we have
                    \begin{displaymath}
                        x_0^{(\lambda,N)} \uparrow e_{00}(\lambda),\qquad \text{as }N\to \infty.
                    \end{displaymath}
                Also, we set $\lambda'=\frac{1}{2}\inf_{i\in E}q_i$.
        \begin{lemma}\label{in_eerg_prep}
            \begin{enumerate}[\upshape (1)]
                \item Assume the $Q$-process is non-exponentially ergodic, then
                        \begin{displaymath}
                            \lim_{N\to\infty} {\hskip -7pt} \uparrow x^{(\lambda', N)}_0=e_{00}(\lambda')=\infty.
                        \end{displaymath}
                \item If\/ $x_0^{(\widetilde{\lambda},N)}$ is finite for some $\widetilde{\lambda}\in\bigl(0, \inf_{i\in E}q_i\bigr)$, then for some $\widehat{\lambda}\in\bigl(\widetilde{\lambda}, \inf_{i\in E}q_i\bigr)$, $x_0^{(\widehat{\lambda},N)}$ is finite.
                \item If\/ $x_0^{(\widetilde{\lambda},N)}<\infty$ for some $\widetilde{\lambda}\in\bigl(0, \inf_{i\in E}q_i\bigr)$, then $x_0^{(\lambda,N)}$ is continuous at $\widetilde{\lambda}$ as a function of $\lambda$.
                \item If\/ $x_0^{(\widetilde{\lambda},N)}=\infty$ for some $\widetilde{\lambda}\in\bigl(0, \inf_{i\in E}q_i\bigr)$, then
                    \begin{displaymath}\begin{split}
                        &\lim_{\lambda\uparrow \widetilde{\lambda}}x_0^{(\lambda,N)}=\infty,\\
                        &x_0^{(\lambda,N)}=\infty,\qquad \lambda>\widetilde{\lambda}.
                    \end{split}\end{displaymath}
                    In other words, $x_0^{(\lambda,N)}$ is continuous at $\widetilde{\lambda}$ as an extended real-valued function.
                \item For any fixed integer $N\geqslant1$,
                    \begin{displaymath}
                        \lim_{\lambda\downarrow 0} x_0^{(\lambda,N)}\leqslant \E_0\mkern-1.5mu\sigma_0<\infty.
                    \end{displaymath}
            \end{enumerate}
        \end{lemma}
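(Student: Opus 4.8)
The plan is to obtain all five assertions from the monotone‑approximation apparatus of \Cref{sec_mini_soln} together with a single explicit supersolution construction. Throughout write $x^{(\lambda,N)}=m_{A_\lambda}g_\lambda$ for the minimal solution of \eqref{eloc}, where $A_\lambda f(i)=\sum_{1\leqslant j\leqslant N,\;j\neq i}\frac{q_{ij}}{q_i-\lambda}\,f(j)$ and $g_\lambda(i)=\frac{1}{q_i-\lambda}$; for every $\lambda\in[0,\inf_i q_i)$ these are cone maps in $\mathscr{A}$ on functions over $\{0,1,\ldots,N\}$, and \eqref{eloc} is precisely the equation $x=A_\lambda x+g_\lambda$. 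Since $A_{\lambda_1}\leqslant A_{\lambda_2}$ and $g_{\lambda_1}\leqslant g_{\lambda_2}$ whenever $\lambda_1\leqslant\lambda_2$, \Cref{mini_cmprs} shows that $\lambda\mapsto x^{(\lambda,N)}$ is non-decreasing in each coordinate (with values in $\overline{\R}_+$), and for any $\lambda_n\uparrow\widetilde\lambda$ we have $A_{\lambda_n}\uparrow A_{\widetilde\lambda}$, $g_{\lambda_n}\uparrow g_{\widetilde\lambda}$, so \Cref{mini_apprx_org} gives $x^{(\lambda_n,N)}\uparrow x^{(\widetilde\lambda,N)}$; that is, $\lim_{\lambda\uparrow\widetilde\lambda}x^{(\lambda,N)}=x^{(\widetilde\lambda,N)}$ always, as extended-real-valued functions. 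This already settles (4): left-continuity gives $\lim_{\lambda\uparrow\widetilde\lambda}x^{(\lambda,N)}_0=x^{(\widetilde\lambda,N)}_0=\infty$, and monotonicity gives $x^{(\lambda,N)}_0\geqslant x^{(\widetilde\lambda,N)}_0=\infty$ for $\lambda>\widetilde\lambda$; it also supplies the left-hand half of (3).

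Assertion (1) is then immediate: the excerpt already records $x^{(\lambda,N)}_0\uparrow e_{00}(\lambda)$ as $N\to\infty$, so it suffices to see $e_{00}(\lambda')=\infty$. By \Cref{con_mini_exp}, $e_{00}(\lambda)=\tfrac1\lambda\bigl(\E_0\mkern-1.5mu\ue^{\lambda\sigma_0}-1\bigr)$, and by the probabilistic description of exponential ergodicity (with $H=\{0\}$) the process is exponentially ergodic iff $\E_0\mkern-1.5mu\ue^{\lambda\sigma_0}<\infty$ for some $\lambda>0$ with $\lambda<q_i$ for all $i$; as $\lambda'=\tfrac12\inf_i q_i$ is such a value, non-exponential ergodicity forces $\E_0\mkern-1.5mu\ue^{\lambda'\sigma_0}=\infty$, hence $e_{00}(\lambda')=\infty$ and $\lim_{N\to\infty}\!\uparrow x^{(\lambda',N)}_0=\infty$.

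The crux is the behaviour of $x^{(\lambda,N)}$ as $\lambda$ increases, which is not covered by \Cref{mini_apprx_org} (that result controls only increasing limits of the operator). I would prove the following: \emph{if $x^{(\widetilde\lambda,N)}_0<\infty$ for some $\widetilde\lambda\in[0,\inf_i q_i)$, then for every $c>1$ there is $\widehat\lambda\in(\widetilde\lambda,\inf_i q_i)$ with $x^{(\lambda,N)}\leqslant c\,x^{(\widetilde\lambda,N)}$ for all $\lambda\in(\widetilde\lambda,\widehat\lambda)$.} The key is the exact relation, valid coordinatewise for $\lambda>\widetilde\lambda$, namely $A_\lambda f(i)=(1+\theta_i(\lambda))A_{\widetilde\lambda}f(i)$ and $g_\lambda(i)=(1+\theta_i(\lambda))g_{\widetilde\lambda}(i)$ with $\theta_i(\lambda)=\tfrac{\lambda-\widetilde\lambda}{q_i-\lambda}\leqslant\tfrac{\lambda-\widetilde\lambda}{\inf_k q_k-\lambda}\to0$ uniformly in $i$ as $\lambda\downarrow\widetilde\lambda$. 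Take $z=c\,x^{(\widetilde\lambda,N)}$ (an $\overline{\R}_+$-valued function). Because $x^{(\widetilde\lambda,N)}$ solves $x=A_{\widetilde\lambda}x+g_{\widetilde\lambda}$, finiteness of a coordinate forces finiteness of all its ($\lambda$-independent) $A_\lambda$-successors, so $F=\{i:x^{(\widetilde\lambda,N)}_i<\infty\}\ni 0$ is closed under the support of $A_\lambda$; hence $A_\lambda z(i)$ involves only finite values for $i\in F$, and using $A_{\widetilde\lambda}x^{(\widetilde\lambda,N)}=x^{(\widetilde\lambda,N)}-g_{\widetilde\lambda}$ the inequality $A_\lambda z+g_\lambda\leqslant z$ reduces on $F$ to $c\,\theta_i(\lambda)\,x^{(\widetilde\lambda,N)}_i\leqslant(c-1)(1+\theta_i(\lambda))\,g_{\widetilde\lambda}(i)$ (and is trivial off $F$), for which it is enough that $c\,(\lambda-\widetilde\lambda)\bigl(1+\tfrac{\lambda-\widetilde\lambda}{\inf_k q_k-\lambda}\bigr)\max_{i\in F}x^{(\widetilde\lambda,N)}_i\leqslant c-1$ — which holds once $\lambda$ is close enough to $\widetilde\lambda$, the maximum being finite. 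Then $z$ solves a controlling equation of \eqref{eloc} (with the same operator and datum), so \Cref{mini_cmprs} gives $x^{(\lambda,N)}\leqslant c\,x^{(\widetilde\lambda,N)}$. Taking $c=2$ proves (2) (any $\lambda\in(\widetilde\lambda,\widehat\lambda)$ serves as the required $\widehat\lambda$); letting $c\downarrow1$ gives $\varlimsup_{\lambda\downarrow\widetilde\lambda}x^{(\lambda,N)}\leqslant x^{(\widetilde\lambda,N)}$, which combined with the left-continuity of the first paragraph yields continuity at $\widetilde\lambda$, i.e.\@ (3).

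Finally (5) is this estimate at $\widetilde\lambda=0$. At $\lambda=0$ the restriction of \eqref{eloc} to rows $1\leqslant i\leqslant N$ is, by \Cref{con_min1} (the $\ell=1$ case of \Cref{con_mini_alge}) applied to the truncated chain $Q^{(N)}$ — for which $q^{(N)}_i=q_i$ and $\Pi^{(N)}_{ij}=q_{ij}/q_i$ when $i\geqslant1$ — the system with minimal solution $(\E_i^{(N)}\sigma_0)_{1\leqslant i\leqslant N}$; since the process is assumed ergodic, $(\E_i\mkern-1.5mu\sigma_0)_{i\geqslant1}$ is finite and, dropping the non-negative terms indexed by $j>N$, is a supersolution of that system, so \Cref{mini_cmprs} gives $\E_i^{(N)}\sigma_0\leqslant\E_i\mkern-1.5mu\sigma_0<\infty$ for $1\leqslant i\leqslant N$. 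The $i=0$ row of \eqref{eloc} then forces $x^{(0,N)}_0=\tfrac1{q_0}\bigl(1+\sum_{1\leqslant j\leqslant N}q_{0j}\E_j^{(N)}\sigma_0\bigr)\leqslant\tfrac1{q_0}\bigl(1+\sum_{j\geqslant1}q_{0j}\E_j\mkern-1.5mu\sigma_0\bigr)=\E_0\mkern-1.5mu\sigma_0<\infty$, so the right-hand estimate with $c\downarrow1$, together with monotonicity, gives $\lim_{\lambda\downarrow0}x^{(\lambda,N)}_0=x^{(0,N)}_0\leqslant\E_0\mkern-1.5mu\sigma_0<\infty$. The single genuine obstacle is the supersolution construction of the third paragraph: it is the only place where one must step outside the packaged monotone-limit theory and use the explicit $\lambda$-dependence of \eqref{eloc} directly.
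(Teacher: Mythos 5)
Your proof is correct, and for the continuity assertions it takes a genuinely different route from the paper's. Assertions (1), (2) and (4) are handled essentially as in the paper: (1) follows from \Cref{mini_apprx_org} together with the probabilistic description of exponential ergodicity, (4) from monotonicity in $\lambda$ plus the increasing-limit theorem, and your $c=2$ supersolution is a quantified version of the paper's device of noting that $2x^{(\widetilde{\lambda},N)}$ satisfies a strict supersolution inequality which survives a small increase of $\lambda$ (your bookkeeping with the set $F$ of finite coordinates is in fact more careful; the paper tacitly treats every coordinate of $x^{(\widetilde{\lambda},N)}$ as finite). The genuine divergence is in (3) and (5): the paper obtains continuity from the right and the $\lambda\downarrow 0$ limit by invoking the probabilistic representations $x_i^{(\lambda,N)}=\frac{1}{\lambda}\bigl(\E_i^{(Q^{(N)})}\ue^{\lambda\sigma_0}-1\bigr)=\int_0^\infty \ue^{\lambda t}\Psub{i}^{(Q^{(N)})}[\sigma_0>t]\df{t}$ and the Lebesgue dominated convergence theorem (using exponential ergodicity of the finite-state truncated chain for (5)), whereas you remain entirely inside the minimal-solution calculus: right-continuity comes from the estimate $x^{(\lambda,N)}\leqslant c\,x^{(\widetilde{\lambda},N)}$ with $c\downarrow 1$, and (5) from identifying $x^{(0,N)}$ with the truncated mean hitting times via \Cref{con_min1} and \Cref{mini_cmprs}. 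Your route is more self-contained --- no integral representation, no dominated convergence --- at the cost of the explicit $\theta_i(\lambda)$ computation; the paper's is shorter once the representation from \cite[Page 148]{chen2004} is granted. Both arguments are sound.
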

        \begin{proof}
            a) The first assertion is a direct inference of \Cref{mini_apprx_org} and non-exponential ergodicity.
            \par
            b) By \Cref{eloc}, $\bigl(2x_i^{(\widetilde{\lambda},N)}, 0\leqslant i \leqslant N\bigr)$ is a finite solution to
                    \begin{displaymath}
                        x_i=\frac{q_i}{q_i-\widetilde{\lambda}}\sum_{
                                    \begin{subarray}{c}
                                        1\leqslant j\leqslant N\\
                                        j\neq i
                                    \end{subarray}}\frac{q_{ij}}{q_i}x_j + \frac{2}{q_i-\widetilde{\lambda}}, \qquad 0\leqslant i \leqslant N.
                    \end{displaymath}
                So it satisfies
                    \begin{displaymath}
                        x_i>\frac{q_i}{q_i-\widetilde{\lambda}}\sum_{
                                    \begin{subarray}{c}
                                        1\leqslant j\leqslant N\\
                                        j\neq i
                                    \end{subarray}}\frac{q_{ij}}{q_i}x_j + \frac{1}{q_i-\widetilde{\lambda}}, \qquad 0\leqslant i \leqslant N.
                    \end{displaymath}
                Consequently, $\bigl(2x_i^{(\widetilde{\lambda},N)}, 0\leqslant i \leqslant N\bigr)$ also satisfies
                    \begin{displaymath}
                        x_i>\frac{q_i}{q_i-\widehat{\lambda}}\sum_{
                                    \begin{subarray}{c}
                                        1\leqslant j\leqslant N\\
                                        j\neq i
                                    \end{subarray}}\frac{q_{ij}}{q_i}x_j + \frac{1}{q_i-\widehat{\lambda}}, \qquad 0\leqslant i \leqslant N,
                    \end{displaymath}
                for some $\widehat{\lambda}$ slightly larger than $\widetilde{\lambda}$. Now by \Cref{mini_cmprs}, $x_0^{(\widehat{\lambda},N)}$ is finite.
            \par
            c) By the second assertion, to prove the third one, we need only prove $x_0^{(\lambda,N)}$ is continuous on the interval $(0,\widetilde{\lambda}]$. Because
                \begin{displaymath}
                     x_i^{(\lambda,N)}=\frac{1}{\lambda}\bigl(\E_i\mkern-1.5mu^{(Q^{(N)})}\ue^{\lambda\sigma_0}-1\bigr), \qquad 1\leqslant i \leqslant N,
                \end{displaymath}
            $x_i^{(\lambda,N)}\,(i=1,2,\ldots,N)$ is continuous on the interval $(0,\widetilde{\lambda}]$ by the Lebesgue dominated convergence theorem. Furthermore, $x_0^{(\lambda,N)}$ is continuous on the interval according to equality
                \begin{displaymath}
                    x_0=\frac{q_0}{q_0-\lambda}\sum_{1\leqslant j \leqslant N}\frac{q_{0j}}{q_0}x_j + \frac{1}{q_0-\lambda}.
                \end{displaymath}
            \par
            d) The fourth assertion is obvious according to above discussions.
            \par
            e) Now we prove the last assertion. Since the $Q$-process is assumed to be ergodic, $\E_0\mkern-1.5mu\sigma_0<\infty$. We need only illustrate
                \begin{displaymath}
                    \lim_{\lambda\downarrow 0} x_0^{(\lambda,N)}\leqslant \E_0\mkern-1.5mu\sigma_0.
                \end{displaymath}
            By the proof of ``Equivalence of Theorems 4.45 and 4.44" in \cite[Page 148]{chen2004}, we have
                \begin{displaymath}
                    x_i^{(\lambda,N)}=\int_{0}^{\infty}\ue^{\lambda t}\Psub{i}^{(Q^{(N)})}[\sigma_0>t]\df{t},\qquad i\geqslant 1.
                \end{displaymath}
            Because the $Q^{(N)}$-process, as a process on finite state space, must be exponentially ergodic, the Lebesgue dominated convergence theorem gives
                \begin{displaymath}
                    \lim_{\lambda\downarrow 0} x_i^{(\lambda,N)}
                    =\int_{0}^{\infty}\Psub{i}^{(Q^{(N)})}[\sigma_0>t]\df{t}
                    = \E_i^{(Q^{(N)})}\mkern-1.5mu\sigma_0
                    \leqslant \E_i\mkern-1.5mu\sigma_0,\qquad i\geqslant 1,
                \end{displaymath}
            where the last inequality is by \Cref{mini_apprx_org,con_min1}. Furthermore, by \Cref{eloc},
                \begin{displaymath}\begin{split}
                    \lim_{\lambda\downarrow 0} x_0^{(\lambda,N)}&=
                    \lim_{\lambda\downarrow 0}\frac{q_0}{q_0-\lambda}\sum_{1\leqslant j \leqslant N}\frac{q_{0j}}{q_0}x^{(\lambda,N)}_j + \lim_{\lambda\downarrow 0}\frac{1}{q_0-\lambda}\\
                    &=\sum_{1\leqslant j \leqslant N}\frac{q_{0j}}{q_0}\E_j^{(Q^{(N)})}\mkern-1.5mu\sigma_0 + \frac{1}{q_0}\\
                    &\leqslant \sum_{j \geqslant 1}\frac{q_{0j}}{q_0}\E_j\mkern-1.5mu\sigma_0 + \frac{1}{q_0}=\E_0\mkern-1.5mu\sigma_0.
                \end{split}\end{displaymath}
            Therefore, the last assertion holds.
        \end{proof}

        \begin{corollary}\label{func}
            For each $N\geqslant 1$, $x_0^{(\lambda,N)}$ is an extended real-valued continuous function as a funtion of $\lambda$ on interval $(0,\lambda']$.\qed
        \end{corollary}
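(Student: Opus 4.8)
The plan is to obtain \Cref{func} by patching together the case analysis already carried out in \Cref{in_eerg_prep}, so that the corollary is essentially a bookkeeping statement. Fix $N\geqslant 1$. Since $\lambda'=\tfrac12\inf_{i\in E}q_i<\inf_{i\in E}q_i$, the whole interval $(0,\lambda']$ is contained in $\bigl(0,\inf_{i\in E}q_i\bigr)$, on which the minimal solution $x_0^{(\lambda,N)}$ to \Cref{eloc} is defined and to which the assertions of \Cref{in_eerg_prep} apply.

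First I would fix an arbitrary $\widetilde{\lambda}\in(0,\lambda']$ and split into two cases according to whether $x_0^{(\widetilde{\lambda},N)}$ is finite. If $x_0^{(\widetilde{\lambda},N)}<\infty$, then by the third assertion of \Cref{in_eerg_prep} the map $\lambda\mapsto x_0^{(\lambda,N)}$ is continuous at $\widetilde{\lambda}$ as an ordinary real-valued function, hence also as an $[0,+\infty]$-valued one. If $x_0^{(\widetilde{\lambda},N)}=\infty$, then the fourth assertion of \Cref{in_eerg_prep} yields $\lim_{\lambda\uparrow\widetilde{\lambda}}x_0^{(\lambda,N)}=\infty$ together with $x_0^{(\lambda,N)}=\infty$ for every $\lambda>\widetilde{\lambda}$, which is exactly continuity at $\widetilde{\lambda}$ in the extended reals. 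Since $\widetilde{\lambda}$ was arbitrary, continuity holds at every interior point of $(0,\lambda']$; at the right endpoint $\lambda'$ only left-continuity is required, and it is again supplied by the third or fourth assertion, while the endpoint $0$ is excluded from the interval. The fifth assertion of \Cref{in_eerg_prep} is not logically needed for the statement, but it records that $x_0^{(\lambda,N)}$ stays finite as $\lambda\downarrow 0$, so in fact the function is real-valued near $0$.

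I expect no genuine obstacle here: all the analytic content — the dominated-convergence argument showing $x_i^{(\lambda,N)}$ is finite and continuous for $i\geqslant 1$, the transfer of continuity to $x_0^{(\lambda,N)}$ through \Cref{eloc}, and the monotone blow-up behaviour in the infinite case — has already been absorbed into the proof of \Cref{in_eerg_prep}. \Cref{func} merely packages the third and fourth assertions as a single continuity claim on $(0,\lambda']$ valued in $[0,+\infty]$, which is why it requires no separate argument.
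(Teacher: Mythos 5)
Your proposal is correct and is exactly the route the paper intends: the corollary is stated with no separate proof precisely because it is the combination of assertions (3) and (4) of \Cref{in_eerg_prep}, split according to whether $x_0^{(\widetilde{\lambda},N)}$ is finite or infinite, read as a single continuity statement for an $[0,+\infty]$-valued function on $(0,\lambda']$. Your bookkeeping (interior points, the left-continuity at the endpoint $\lambda'$, and the irrelevance of assertion (5)) is accurate and adds nothing beyond what the paper's \qed already presupposes.
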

        \begin{proof}[Proof of necessity of $\Cref{in_eerg_con}$]
            For each positive integer $n\leqslant\E_0\mkern-1.5mu\sigma_0$, we define $y^{(n)}_i\equiv 0\,(i\in E)$ and $\lambda_n=\lambda'$. And for each $n>\E_0\mkern-1.5mu\sigma_0$, we now construct $y^{(n)}=\bigl(y^{(n)}_i\bigr)_{i\in E}$ and $\lambda_n$ satisfying
                \begin{displaymath}
                    y^{(n)}_0\geqslant n,\quad\lambda_n\leqslant\frac{1}{n}.
                \end{displaymath}
            In fact, by the first assertion of \Cref{in_eerg_prep}, we may pick a large $N_n$ such that
                \begin{displaymath}
                    x^{(\lambda', N_n)}_0\geqslant n.
                \end{displaymath}
            Then for each $N\geqslant N_n$,
                \begin{displaymath}
                    x^{(\lambda', N)}_0\geqslant n.
                \end{displaymath}
            Furthermore, by \Cref{func} and the last assertion of \Cref{in_eerg_prep}, for each $N\geqslant N_n$, there exists $\lambda(n,N)\in (0,\lambda']$ such that
                \begin{displaymath}
                    x^{(\lambda(n,N), N)}_0= n.
                \end{displaymath}
            For ease of notation, we write $ c=\inf_{N\geqslant N_n}\lambda(n,N)$. Now, we claim $c=0$.\\
            Otherwise if $c>0$, we have
                \begin{displaymath}
                    e_{00}(c)=\lim_{N\to\infty}x_0^{(c, N)}\leqslant n,
                \end{displaymath}
            contradicting non-exponential ergodicity.\\
            Consequently, we may pick some $\lambda(n,\widetilde{N}_n)\leqslant\frac{1}{n}$ and denote it as $\lambda_n$. Then we have $\lambda_n\leqslant\frac{1}{n}$ and $x^{(\lambda_n, \widetilde{N}_n)}_0=n$. Set
                \begin{numcases}
                    {y^{(n)}_i=}
                    x^{(\lambda_n,\widetilde{N}_n)}_i, \quad\nonumber &$0\leqslant i \leqslant \widetilde{N}_n$, \\
                    0, &$i\geqslant \widetilde{N}_n+1$.\nonumber
                \end{numcases}
           It is now straightforward to verify that $\{\lambda_n\}_{n=1}^\infty$ and $\{y^{(n)}\}_{n=1}^\infty$ are the desired sequences. Necessity of our condition follows immediately.
        \end{proof}
        \begin{proof}[Proof of sufficiency of $\Cref{in_eerg_con}$]
        a) We first demonstrate
            \begin{displaymath}
                y^{(n)}_0 \leqslant e_{00}(\lambda_n),\qquad n\geqslant 1.
            \end{displaymath}
        In fact, since $\bigl(y^{(n)}\bigr)_{i\in E}$ is finitely supported for each $n\geqslant 1$, we may pick $N_n$ such that
            \begin{displaymath}
                y_i^{(n)}\leqslant \frac{q_i}{q_i-\lambda_n}
                \sum_{
                                    \begin{subarray}{c}
                                        1\leqslant j\leqslant N_n\\
                                        j\neq i
                                    \end{subarray}}\frac{q_{ij}}{q_i}y^{(n)}_j+\frac{1}{q_i-\lambda_n},\qquad 1\leqslant i\leqslant N_n.
            \end{displaymath}
        At the same time, denote the minimal solution of
            \begin{displaymath}
                x_i=\frac{q_i}{q_i-\lambda_n}\sum_{
                                    \begin{subarray}{c}
                                        1\leqslant j\leqslant N_n\\
                                        j\neq i
                                    \end{subarray}}\frac{q_{ij}}{q_i}x_j + \frac{1}{q_i-\lambda_n}, \qquad 1\leqslant i \leqslant N_n
            \end{displaymath}
        as $\bigl(x_i^{(\lambda_n,N_n)}, 1\leqslant i \leqslant N_n\bigr)$, which is positive. Then
        by \Cref{mini_apprx_org} and \Cref{mini_ctrl_in},
            \begin{displaymath}
                y^{(n)}_i \leqslant x_i^{(\lambda_n,N_n)} \leqslant e_{i0}(\lambda_n),\qquad 1\leqslant i\leqslant N_n.
            \end{displaymath}
        It follows that
            \begin{displaymath}\begin{split}
                y_0^{(n)}
                &\leqslant \frac{q_0}{q_0-\lambda_n}\sum_{1\leqslant j\leqslant N_n}\frac{q_{0j}}{q_0}y^{(n)}_j+\frac{1}{q_0-\lambda_n}\\
                &\leqslant \frac{q_0}{q_0-\lambda_n}\sum_{1\leqslant j\leqslant N_n}\frac{q_{0j}}{q_0}e_{j0}(\lambda_n)+\frac{1}{q_0-\lambda_n}\\
                &\leqslant \frac{q_0}{q_0-\lambda_n}\sum_{j\geqslant 1}\frac{q_{0j}}{q_0}e_{j0}(\lambda_n)+\frac{1}{q_0-\lambda_n}
                =e_{00}(\lambda_n),
            \end{split}\end{displaymath}
        where the last equality is by \Cref{con_mini_exp}. This is exactly the desired inequality.
        \par
        b) For an arbitrary $\lambda>0$, when $\lambda_n<\lambda$,
            \begin{displaymath}
                y^{(n)}_0 \leqslant e_{00}(\lambda_n) \leqslant e_{00}(\lambda).
            \end{displaymath}
         Consequently,
            \begin{displaymath}
                \infty=\varlimsup_{n\to \infty}y_0^{(n)}\leqslant e_{00}(\lambda).
            \end{displaymath}
         It turns out that $\E_0\mkern-1.5mu\ue^{\lambda\sigma_0}=\infty\,(\lambda>0)$. So the $Q$-process is non-exponentially ergodic. Sufficiency of \Cref{in_eerg_con} is proved.
        \end{proof}
\section{Applications to Single Birth Processes}\label{chp_sbp}
    \subsection{Explicit Criteria for Single Birth Processes: Alternative Proofs}\label{sec_explct}
        Explicit and computable criteria for ergodicity and strong ergodicity of single birth processes have been studied in \cite{yanchen1986,zhang2001}, respectively. In this section, we present alternative proofs (of the necessity parts) for these explicit criteria.
        \par
        Let $Q$ be an irreducible regular single birth $Q$-matrix on state space $E=\Z_+=\{0,1,2,\ldots\}$. We have
            \begin{displaymath}\begin{split}
                &q_{i,i+1}>0,\\
                &q_{i,i+j}=0,\qquad i\geqslant0,\enspace j\geqslant2.
            \end{split}\end{displaymath}
        Define $q_n^{(k)}=\sum_{j=0}^{k}q_{nj}$ for $0\leqslant k<n\,(k,n\geqslant0)$ and
            \begin{displaymath}
                F_n^{(n)}=1,\quad F_n^{(i)}=\frac{1}{q_{n,n+1}}\sum_{k=i}^{n-1}q_n^{(k)}F_k^{(i)}\,(0\leqslant i\leqslant n),
            \end{displaymath}
            \begin{equation}\label{def_d}
                d_0=0,\quad  d_n=\frac{1}{q_{n,n+1}}\Bigl(1+\sum_{k=0}^{n-1}q_n^{(k)}d_k\Bigr) = \sum_{k=1}^n\frac{F_n^{(k)}}{q_{k,k+1}}\,(n\geqslant1).
            \end{equation}
        Also, we define
            \begin{displaymath}
                d=\sup_{k\geqslant 0}\frac{\sum_{n=0}^k d_n}{\sum_{n=0}^k F_n^{(0)}}.
            \end{displaymath}
            It is well-known that the $Q$-process is recurrent iff $\sum_{n=0}^\infty F_n^{(0)}=\infty$
            (cf.\@ \cite{chen2004, chenzhang2014}).
            \par
            To give alternative proofs for explicit ergodicity criteria for single birth processes, we first make some preparations.
        \begin{lemma}\label{eu_solu}
            Let $Q$ be an irreducible regular single birth $Q$-matrix and $N$ a positive integer. We investigate the following (truncated) equation:
            \begin{equation}\label{sglbth_erg_eq}
                    x_i = \sum_{
                                    \begin{subarray}{c}
                                        1\leqslant j\leqslant N\\
                                        j\neq i
                                    \end{subarray}}\frac{q_{ij}}{q_i}x_j+\frac{1}{q_i},\qquad 1\leqslant i\leqslant N.
            \end{equation}
            \begin{enumerate}[\upshape (1)]
                \item  \Cref{sglbth_erg_eq} has a unique solution, denoted as $\bigl(x^{(N)}_1,x^{(N)}_2,\ldots,x^{(N)}_N\bigr)$.
                \item   We have a recurrence relation:
                        \begin{displaymath}
                            x^{(N)}_k = x^{(N)}_1 \sum_{n=0}^{k-1}F_n^{(0)}- \sum_{n=0}^{k-1}d_n,\qquad 1\leqslant k\leqslant N.
                        \end{displaymath}
                \item The unique solution is positive.
                \item $\varlimsup_{N\to \infty}x^{(N)}_1 \geqslant d$.
            \end{enumerate}
        \end{lemma}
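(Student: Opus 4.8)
The plan is to funnel all four assertions through a single forward recursion and then recognise, inside it, the recursive definitions of $F_n^{(0)}$ and $d_n$. Abbreviate $S_k=\sum_{n=0}^{k-1}F_n^{(0)}$ and $T_k=\sum_{n=0}^{k-1}d_n$, so that the identity in (2) reads $x^{(N)}_k=x^{(N)}_1 S_k-T_k$, and adopt the convention $x_0=0$ (harmless, since $0\in H$). Multiplying \Cref{sglbth_erg_eq} by $q_i$ and using the single birth property $q_{ij}=0$ for $j\geqslant i+2$ turns the equations with $1\leqslant i\leqslant N-1$ into the forward recursion
\begin{displaymath}
  q_{i,i+1}x_{i+1}=q_i x_i-\sum_{j=0}^{i-1}q_{ij}x_j-1,\qquad 1\leqslant i\leqslant N-1,
\end{displaymath}
which, as $q_{i,i+1}>0$, determines $x_2,\dots,x_N$ as affine functions of $x_1$. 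I would then substitute the ansatz $x_k=x_1 S_k-T_k$ and match the coefficient of $x_1$ and the constant term separately; using $q_i=q_{i,i+1}+q_i^{(i-1)}$, $S_{i+1}=S_i+F_i^{(0)}$, and the interchange of summation $\sum_{j=0}^{i-1}q_{ij}(S_i-S_j)=\sum_{n=0}^{i-1}F_n^{(0)}\sum_{j=0}^{n}q_{ij}=\sum_{n=0}^{i-1}q_i^{(n)}F_n^{(0)}$ (and its analogue for $T$), the two matching conditions collapse to exactly the recursive definitions $q_{i,i+1}F_i^{(0)}=\sum_{n=0}^{i-1}q_i^{(n)}F_n^{(0)}$ and $q_{i,i+1}d_i=1+\sum_{n=0}^{i-1}q_i^{(n)}d_n$ recorded in \Cref{def_d}. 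This yields (2).

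Next I would prove (1) by feeding the same ansatz into the single remaining equation, the one with $i=N$. By the identical interchange of summation (this time closing up the recursion that \emph{defines} $F_N^{(0)}$), the coefficient of $x_1$ in that equation works out to $q_{N,N+1}\bigl(S_N+F_N^{(0)}\bigr)=q_{N,N+1}S_{N+1}$, which is strictly positive: $q_{N,N+1}>0$ by assumption, and $F_n^{(0)}>0$ for every $n$ by induction from $F_0^{(0)}=1$ together with $q_n^{(n-1)}>0$, the latter forced by irreducibility (if $q_n^{(n-1)}=0$ then from state $n$ the process could never reach any lower state). Hence the $i=N$ equation has a unique root $x_1$, and the forward recursion then produces a unique $(x_2,\dots,x_N)$; this establishes existence and uniqueness, which is (1).

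Assertions (3) and (4) are then soft. The finite matrix $Q^{(N)}$ introduced in the proof of \Cref{apprxa} is irreducible, so the associated finite-state process is positive recurrent, and by \Cref{con_min1} (applied to $Q^{(N)}$ with $H=\{0\}$) the vector of its return-time expectations $\bigl(\E_i^{(Q^{(N)})}\sigma_0\bigr)_{1\leqslant i\leqslant N}$ is a non-negative solution of \Cref{sglbth_erg_eq}; by the uniqueness just obtained it equals $x^{(N)}$. Plugging this non-negative solution back into \Cref{sglbth_erg_eq} gives $x^{(N)}_i\geqslant 1/q_i>0$, i.e.\ (3). Finally, combining (2) with (3), $0<x^{(N)}_k=x^{(N)}_1 S_k-T_k$ with $S_k\geqslant S_1=1$, so $x^{(N)}_1>T_k/S_k$ for every $1\leqslant k\leqslant N$; letting $N\to\infty$ gives
\begin{displaymath}
  \varlimsup_{N\to\infty} x^{(N)}_1 \;\geqslant\; \sup_{k\geqslant 1}\frac{T_k}{S_k}
  = \sup_{k\geqslant 0}\frac{\sum_{n=0}^{k} d_n}{\sum_{n=0}^{k} F_n^{(0)}} = d,
\end{displaymath}
which is (4).

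The only genuinely technical point is the bookkeeping in the first paragraph (reused in the second): checking that substituting $x_k=x_1 S_k-T_k$ into the forward recursion reduces, after one summation by parts / interchange of a double sum, to nothing more than the defining recursions of $F_k^{(0)}$ and $d_k$. I do not expect a conceptual obstacle there — it is a careful but routine manipulation of the single birth quantities — and the remaining ingredients (uniqueness of minimal non-negative solutions, positivity of hitting-time moments, monotone limits) are standard and already available from earlier in the paper.
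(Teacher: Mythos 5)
Your proposal is correct in substance and, for assertions (2) and (4), follows essentially the paper's route: the paper also passes to the increments $v_n=x^{(N)}_{n+1}-x^{(N)}_n$, shows by induction that $v_n=v_0F_n^{(0)}-d_n$ (which is your coefficient-matching in disguise), and deduces (4) from (2) and positivity exactly as you do. Where you genuinely diverge is in (1) and (3). For (1) the paper does not use the parametrization at all: it shows the homogeneous system $\sum_{j=1}^Nq_{ij}\overline{x}_j=0$ has only the trivial solution by a monotonicity argument ($\overline{x}_1\leqslant\overline{x}_2\leqslant\cdots\leqslant\overline{x}_N$, then a contradiction at $i=N$), whereas you reduce everything to a single affine equation in $x_1$ coming from the $i=N$ row; both are elementary, and yours has the advantage of reusing the bookkeeping already done for (2). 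For (3) the paper argues directly from the equations (if the minimum were $\leqslant0$ at index $i$, the $i$-th row would force $-1\geqslant0$), which is self-contained; your identification of $x^{(N)}$ with the return-time expectations of the truncated chain $Q^{(N)}$ is also valid (that chain is irreducible and finite, so \Cref{con_min1} applies), but it imports machinery that the paper deliberately avoids in this lemma.

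One claim in your proof of (1) is wrong as stated: irreducibility does \emph{not} force $q_n^{(n-1)}>0$, hence not $F_n^{(0)}>0$, for every $n$. In a single birth chain a state $n$ may have no downward transition at all and still reach lower states by first climbing to some $m>n$ with $q_{mj}>0$, $j<n$; the paper's own catastrophe model in \Cref{sec_catastr} ($q_{i,i+1}=i+1$, $q_{i0}=\alpha_i\geqslant0$ with only infinitely many $\alpha_i$ nonzero) gives $F_n^{(0)}=0$ whenever $\alpha_n=0$. The gap is harmless for your argument, because all you need is that the coefficient $q_{N,N+1}\sum_{n=0}^{N}F_n^{(0)}$ of $x_1$ in the $i=N$ equation is strictly positive, and this already follows from $F_0^{(0)}=1$ together with $F_n^{(0)}\geqslant0$ (clear by induction from the defining recursion, since all $q_n^{(k)}\geqslant0$ and $q_{n,n+1}>0$). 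With that correction the proposal goes through.
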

        \begin{proof}
            a) \Cref{sglbth_erg_eq} has the following equivalent form:
                    \begin{displaymath}
                        \sum_{j=1}^N q_{ij}x_j=-1,\qquad 1\leqslant i\leqslant N.
                    \end{displaymath}
                To prove regularity of this linear system, we need only prove the following homogeneous equation
                    \begin{equation}\label{var_sglbth_erg_eq}
                        \sum_{j=1}^N q_{ij}x_j=0,\qquad 1\leqslant i\leqslant N
                    \end{equation}
                has only trivial solution.
                \par
                Otherwise, if \Cref{var_sglbth_erg_eq} had a non-trivial solution  $(\overline{x}_1,\overline{x}_2,\ldots,\overline{x}_N)$, assume $\overline{x}_1\geqslant 0$ without loss of generality. We claim $\overline{x}_1\leqslant \overline{x}_2$. Since if $\overline{x}_1> \overline{x}_2$, \Cref{var_sglbth_erg_eq} with $i=1$ leads to
                    \begin{displaymath}
                        0=q_{11}\overline{x}_1+q_{12}\overline{x}_2< q_{11}\overline{x}_1+q_{12}\overline{x}_1\leqslant 0,
                    \end{displaymath}
                a contradiction. So we obtain $\overline{x}_1\leqslant \overline{x}_2$. Furthermore, we may proceed to prove that $\overline{x}_k\leqslant \overline{x}_{k+1}$ using similar arguments for $k=2,3,\ldots,N-1$. That is
                    \begin{displaymath}
                        \overline{x}_1\leqslant \overline{x}_2\leqslant\cdots\leqslant\overline{x}_N.
                    \end{displaymath}
                Since the solution is non-trivial, we have $\overline{x}_N>0$. Therefore,
                    \begin{displaymath}\begin{split}
                        0&=q_{N1}\overline{x}_1+q_{N2}\overline{x}_2+\cdots+q_{N,N-1}\overline{x}_{N-1}+q_{N,N}\overline{x}_N\\
                        &\leqslant (q_{N1}+q_{N2}+\cdots+q_{N,N-1}+q_{N,N})\overline{x}_N<0,
                    \end{split}\end{displaymath}
                a contradiction. So \Cref{var_sglbth_erg_eq} has only trivial solution. In this way, we prove the first assertion.
            \par
            b) To prove the second assertion, we mimic the proof of \cite[Lemma 2.1]{zhang2001}. Define
                \begin{displaymath}
                    v_0=x^{(N)}_1,\ v_n=x^{(N)}_{n+1}-x^{(N)}_n,\qquad 1\leqslant n \leqslant N-1.
                \end{displaymath}
            From \Cref{sglbth_erg_eq}, we easily derive that
                \begin{displaymath}
                    v_n=\frac{1}{q_{n,n+1}}\Bigl(\sum_{k=0}^{n-1}q_n^{(k)}v_k-1\Bigr),\qquad 1\leqslant n\leqslant N-1.
                \end{displaymath}
            By induction, $v_n=v_0 F_n^{(0)}-d_n$ for $0\leqslant n\leqslant N-1$. Our assertion follows immediately.
            \par
            c) If $x^{(N)}_i=\min_{1\leqslant k\leqslant N}x^{(N)}_k \leqslant 0$, then
                \begin{displaymath}\begin{split}
                    -1&=\sum_{j=1}^N q_{ij}x^{(N)}_j=\sum_{j=1}^{i-1} q_{ij}\bigl(x^{(N)}_j-x^{(N)}_i\bigr) -q_{i0}x^{(N)}_i\\
                      &\ \ \ +(1-\delta_{i,N}) q_{i,i+1}\bigl(x^{(N)}_{i+1}-x^{(N)}_i\bigr)- \delta_{i,N}q_{i,i+1}x^{(N)}_i \geqslant 0,
                \end{split}\end{displaymath}
            where $\delta$ is the Kronecker delta. This contradiction infers that the unique solution is positive.
            \par
            d) By the second assertion and the positiveness of the solution, we have
                \begin{displaymath}
                    x^{(N)}_1 > \max_{1\leqslant k\leqslant N} \frac{\sum_{n=0}^{k-1} d_n}{\sum_{n=0}^{k-1} F_n^{(0)}}.
                \end{displaymath}
               So the last assertion follows immediately.
        \end{proof}
		\par
        We are now in position to present our alternative proofs for explicit criteria of single birth processes.
		\par
        The following ergodicity criterion is due to Shi-Jian Yan and Mu-Fa Chen \cite{yanchen1986}. Here, proof for sufficiency is picked from \cite{yanchen1986} for completeness.
        \begin{theorem}\label{explct_erg}
            Let $Q$ be a regular single birth $Q$-matrix, then the $Q$-process is ergodic iff\/ $d<\infty$.
        \end{theorem}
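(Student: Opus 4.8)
The plan is to reduce ergodicity to the finiteness of the mean return time $\E_1\mkern-1.5mu\sigma_0$ and then to pit the explicit single-birth quantities $F_n^{(0)},d_n$ against the finite-approximation results already established. For \emph{sufficiency} I would follow \cite{yanchen1986}: assuming $d<\infty$, set $x_1=d$ and extend it by the recursion $v_0=x_1$, $v_n=v_0F_n^{(0)}-d_n$ (the very recursion used in the proof of \Cref{eu_solu}) to $x_k=d\sum_{n=0}^{k-1}F_n^{(0)}-\sum_{n=0}^{k-1}d_n$, $k\geqslant1$. Since $d$ is a supremum, each $x_k\geqslant0$; the sequence is plainly finite and, because each row of the return-time equation $x_i=\sum_{j\geqslant1,\,j\neq i}\Pi_{ij}x_j+1/q_i$ involves only $x_1,\dots,x_{i+1}$ in the single-birth case, $(x_i)_{i\geqslant1}$ solves that equation. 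Minimality of $(\E_i\mkern-1.5mu\sigma_0)_{i\geqslant1}$ (\Cref{con_min1}) then gives $\E_i\mkern-1.5mu\sigma_0\leqslant x_i<\infty$, and since from state $0$ the only available upward jump is to state $1$ we obtain $\E_0\mkern-1.5mu\sigma_0=\E_1\mkern-1.5mu\sigma_0+1/q_0<\infty$, so the $Q$-process is ergodic.

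For \emph{necessity}, which is the point of the alternative proof, suppose the $Q$-process is ergodic, so $\E_0\mkern-1.5mu\sigma_0<\infty$ and hence $\E_1\mkern-1.5mu\sigma_0<\infty$. The crux is that the truncated system \eqref{sglbth_erg_eq} of \Cref{eu_solu} is exactly the $\ell=0$ instance of the truncated system introduced before \Cref{apprxa}. Since \eqref{sglbth_erg_eq} has a \emph{unique} solution, which is positive and (by the first assertion of \Cref{apprxa}) finite, it must coincide with the minimal non-negative solution $x^{(N)}$ of \Cref{apprxa}. Hence the second assertion of \Cref{apprxa} gives $x^{(N)}_1\uparrow\E_1\mkern-1.5mu\sigma_0<\infty$, while the fourth assertion of \Cref{eu_solu} gives $\varlimsup_{N\to\infty}x^{(N)}_1\geqslant d$; as $(x^{(N)}_1)_{N\geqslant1}$ is increasing, its limit superior is a genuine limit, so $d\leqslant\E_1\mkern-1.5mu\sigma_0<\infty$, completing the equivalence.

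I expect the only delicate step to be this identification of the two truncated systems: one must check that the finite-state matrix $Q^{(N)}$ of \Cref{sec_neces}, restricted to states $1,\dots,N$, produces precisely \eqref{sglbth_erg_eq}; that the first two assertions of \Cref{apprxa} are available under the sole standing hypothesis of recurrence (the case $\ell=0$, i.e.\ $0$-ergodicity); and that the limit superior in \Cref{eu_solu} may be upgraded to a limit via monotonicity. Once these points are secured, everything else — the single-birth recursion for the increments $v_n$, the non-negativity of the explicit solution when $d<\infty$, and the reduction from $\E_0\mkern-1.5mu\sigma_0$ to $\E_1\mkern-1.5mu\sigma_0$ — is routine manipulation of the quantities $F_n^{(0)}$ and $d_n$.
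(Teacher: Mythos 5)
Your proof is correct, but the necessity half takes a genuinely different route from the paper's. For sufficiency you both ultimately reproduce the Yan--Chen construction $x_k=d\sum_{n=0}^{k-1}F_n^{(0)}-\sum_{n=0}^{k-1}d_n$; the paper feeds it into the classical drift criterion (\cite[Theorem 4.45(1)]{chen2004}) while you feed it into the minimality of $(\E_i\mkern-1.5mu\sigma_0)_{i\geqslant1}$ via \Cref{con_min1} --- a cosmetic difference. For necessity the paper argues by contraposition: if $d=\infty$, the truncated solutions of \Cref{eu_solu} (padded by zeros and with an extra value at $0$) form a testing sequence for the new inverse-problem criterion \Cref{in_erg_con}, whence non-ergodicity. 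You instead argue directly: identify the unique solution of \eqref{sglbth_erg_eq} with the minimal solution $x^{(N)}$ of the $\ell=0$ truncated system, invoke \Cref{apprxa} (equivalently \Cref{mini_apprx_org}) to get $x^{(N)}_1\uparrow\E_1\mkern-1.5mu\sigma_0$, and combine with \Cref{eu_solu}(4) to conclude $d\leqslant\E_1\mkern-1.5mu\sigma_0<\infty$. All the steps you flag as delicate do go through: the two truncated systems are literally the same when $\ell=0$, \Cref{apprxa}(1)--(2) only need $0$-ergodicity (recurrence), and monotonicity upgrades the $\varlimsup$ to a limit. Your route is shorter and even yields the quantitative bound $d\leqslant\E_1\mkern-1.5mu\sigma_0$ (in fact one can see $d=\E_1\mkern-1.5mu\sigma_0$, since $x_1=d$ is the smallest choice keeping the one-parameter family of solutions non-negative); what it does not do is illustrate the inverse-problem criterion \Cref{in_erg_con}, which is the stated purpose of this section of the paper, so the paper's detour through the testing-sequence formulation is deliberate rather than necessary.
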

        \begin{proof}
			a) When $d<\infty$, we define
				\begin{displaymath}
					y_0=0,\ y_k=\sum_{n=0}^{k-1}\bigl(F_n^{(0)}d - d_n\bigr),\qquad k\geqslant 1.
				\end{displaymath}
                Then $(y_i)_{i\geqslant0}$ satisfies the condition of \cite[Theorem 4.45(1)]{chen2004} with  $H=\{0\}$. So the $Q$-process is ergodic when $d<\infty$.
			 \par
			b) When $d=\infty$, for each $N\geqslant1$, we define
				\begin{displaymath}
					y^{(N)}_0=x^{(N)}_1+\frac{1}{q_1},\quad
					y^{(N)}_i= x^{(N)}_i\,(1\leqslant i\leqslant N),\quad
					y^{(N)}_i= 0\,(i\geqslant N+1).
				\end{displaymath}
                Because $\varlimsup_{N\to \infty}x^{(N)}_1 \geqslant d=\infty$, it can be easily seen that the conditions of \Cref{in_erg_con} are satisfied by the sequences $\{y^{(N)}\}^{\infty}_{N=1}$ and $H=\{0\}$ . So the $Q$-process is non-ergodic if $d=\infty$.
        \end{proof}

		\par
        The following strong ergodicity criterion is due to Yu-Hui Zhang \cite{zhang2001}.
        \begin{theorem}\label{explct_serg}
            Let $Q$ be a regular single birth $Q$-matrix, then the $Q$-process is strongly ergodic iff\/ $\sup_{k\geqslant0}\sum_{j=0}^k \bigl(F_j^{(0)}d-d_j\bigr)<\infty$.
        \end{theorem}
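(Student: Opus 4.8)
The plan is to reduce the statement to an explicit formula for the mean return time and then read off both directions from the probabilistic description recalled in the introduction: the $Q$-process is strongly ergodic iff $(\E_i\sigma_0)_{i\geqslant 1}$ is bounded (we take $H=\{0\}$). First one should dispose of the degenerate case $d=\infty$: there the process is non-ergodic by \Cref{explct_erg}, hence non-strongly ergodic, while also $d\sum_{j=0}^{k}F_j^{(0)}-\sum_{j=0}^{k}d_j=\infty$ for every $k$ since $\sum_{j=0}^{k}F_j^{(0)}\geqslant F_0^{(0)}=1>0$; so both sides of the equivalence fail. From now on $d<\infty$, whence \Cref{explct_erg} gives ergodicity and $(\E_i\sigma_0)_{i\geqslant 1}$ is finite.

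The core step is the identity
\begin{displaymath}
    \E_i\sigma_0=\sum_{n=0}^{i-1}\bigl(F_n^{(0)}d-d_n\bigr),\qquad i\geqslant 1.
\end{displaymath}
By \Cref{con_min1}, $(\E_i\sigma_0)_{i\geqslant 1}$ is the minimal non-negative solution of $x_i=\sum_{j\notin H}\Pi_{ij}x_j+1/q_i$ $(i\geqslant 1)$, which for a single birth $Q$-matrix is a forward recursion giving $x_{i+1}$ from $x_1,\dots,x_i$ (with boundary value $x_0=0$). Running the substitution $v_0=x_1$, $v_n=x_{n+1}-x_n$ exactly as in the proof of \Cref{eu_solu}, but for this infinite system, shows every solution has the form $x_k=x_1\sum_{n=0}^{k-1}F_n^{(0)}-\sum_{n=0}^{k-1}d_n$; thus solutions form a one-parameter family indexed by $x_1$, a larger $x_1$ giving a pointwise larger solution (since $\sum_{n=0}^{k-1}F_n^{(0)}\geqslant 1$). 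Non-negativity forces $x_1\geqslant\sum_{n=0}^{k-1}d_n\big/\sum_{n=0}^{k-1}F_n^{(0)}$ for all $k$, i.e.\ $x_1\geqslant d$; conversely $x_1=d$ does produce a non-negative solution, namely $y_i:=\sum_{n=0}^{i-1}(F_n^{(0)}d-d_n)$ (this is the function already used in the proof of \Cref{explct_erg}; the recursion shows it solves the system with equality). Hence the minimal non-negative solution is the one with $x_1=d$, which is the stated identity.

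Granting this, the theorem follows at once: the process is strongly ergodic iff $(\E_i\sigma_0)_{i\geqslant 1}$ is bounded iff $\sup_{i\geqslant 1}\sum_{n=0}^{i-1}(F_n^{(0)}d-d_n)<\infty$, which after reindexing $i\mapsto i-1$ is precisely $\sup_{k\geqslant 0}\sum_{j=0}^{k}(F_j^{(0)}d-d_j)<\infty$. It is worth recording two variants matching the style of \Cref{explct_erg}. For sufficiency, when the supremum is finite the function $y$ above is a bounded solution of the return-time equation, so the Comparison Principle \Cref{mini_cmprs} already gives $\E_i\sigma_0\leqslant\sup_i y_i<\infty$. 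For necessity, when the supremum is infinite one can argue through \Cref{in_serg_con}: take $y^{(N)}_i:=x^{(N)}_i$ for $1\leqslant i\leqslant N$ and $:=0$ for $i\geqslant N+1$, with $x^{(N)}$ the truncated solution of \Cref{eu_solu}; these satisfy condition (1) of \Cref{in_serg_con}, and since $x^{(N)}_1\uparrow\E_1\sigma_0$ (by \Cref{apprxa} with $\ell=0$) while $\E_1\sigma_0\geqslant d$ by non-negativity, the recurrence form gives $\sup_N\max_{i} x^{(N)}_i\geqslant\sup_k\bigl(d\sum_{n=0}^{k-1}F_n^{(0)}-\sum_{n=0}^{k-1}d_n\bigr)=\infty$, which is condition (2).

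I expect the main obstacle to be the identification $\E_1\sigma_0=d$ (or, in the \Cref{in_serg_con} route, its easy half $\E_1\sigma_0\geqslant d$): this requires combining the forward-recursion structure (so every solution lies in the one-parameter family), the non-negativity constraint (to get $x_1\geqslant d$), and the minimality in \Cref{con_min1} together with the explicit non-negative solution at $x_1=d$ (to get $x_1\leqslant d$). The degenerate case, the reindexing, and the passage from the truncated recursion of \Cref{eu_solu} to the infinite one are routine.
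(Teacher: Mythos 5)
Your proposal is correct, but its main route differs from the paper's. The paper splits the proof into two independent halves: for sufficiency it takes the test function $y_k=\sum_{n=0}^{k-1}\bigl(F_n^{(0)}d-d_n\bigr)$ and invokes the classical drift criterion \cite[Theorem 4.45(3)]{chen2004} (following \cite{zhang2001}), and for necessity it feeds the truncated solutions $x^{(N)}$ of \Cref{eu_solu} into the new inverse criterion \Cref{in_serg_con}, using only the one-sided bound $\varlimsup_N x_1^{(N)}\geqslant d$ from \Cref{eu_solu}(4) --- indeed the whole point of placing this theorem in \Cref{chp_sbp} is to showcase \Cref{in_serg_con}. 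You instead identify the minimal solution exactly: the single-birth structure makes the return-time system a forward recursion, so all solutions form the one-parameter family $x_k=x_1\sum_{n=0}^{k-1}F_n^{(0)}-\sum_{n=0}^{k-1}d_n$, non-negativity pins $x_1\geqslant d$, and minimality together with \Cref{con_min1} gives $\E_i\mkern-1.5mu\sigma_0=\sum_{n=0}^{i-1}\bigl(F_j^{(0)}d-d_j\bigr)$; both directions then fall out of the probabilistic characterization of strong ergodicity in one stroke. This is sound (and the formula is the known explicit expression from the single-birth literature, cf.\@ \cite{chenzhang2014}; the paper's \Cref{upbound} already exhibits the same family of solutions), and it is arguably sharper since it yields the value of $\sup_i\E_i\mkern-1.5mu\sigma_0$ rather than two separate bounds. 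What it buys less of is the paper's pedagogical aim: your ``variant'' for necessity --- truncated solutions plus \Cref{in_serg_con}, with $x_1^{(N)}\uparrow\E_1\mkern-1.5mu\sigma_0\geqslant d$ in place of \Cref{eu_solu}(4) --- is essentially the paper's argument, and your comparison-principle variant for sufficiency is essentially what \cite[Theorem 4.45(3)]{chen2004} encapsulates. Your handling of the degenerate case $d=\infty$ (which the paper avoids by assuming ergodicity without loss of generality via \Cref{explct_erg}) is a reasonable, if routine, addition.
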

        \begin{proof}
            We assume the process is ergodic without loss of generality. In light of \Cref{explct_erg}, $d<\infty$ equivalently.
			\par
            a) When $\sup_{k\geqslant0}\sum_{j=0}^k \bigl(F_j^{(0)}d-d_j\bigr)<\infty$, we define
				\begin{displaymath}\begin{split}
					&y_0=0,\\
                    &y_k=\sum_{n=0}^{k-1}\bigl(F_n^{(0)}d - d_n\bigr),\qquad k\geqslant 1.
				\end{split}\end{displaymath}
            Then $(y_i)_{i\geqslant0}$ satisfies the condition of \cite[Teorem 4.45(3)]{chen2004} with  $H=\{0\}$. So the $Q$-process is strongly ergodic. This proof of sufficiency is not original but picked from \cite{zhang2001}.
			\par
            b) When $\sup_{k\geqslant0}\sum_{j=0}^k \bigl(F_j^{(0)}d-d_j\bigr)=\infty$, for each $N\geqslant1$, we define
				\begin{displaymath}
					y^{(N)}_i= x^{(N)}_i\,(1\leqslant i\leqslant N),
                    \quad
					y^{(N)}_i= 0\,(i\geqslant N+1).
				\end{displaymath}
                It is obvious that $ \sup_{i\geqslant 1}y^{(N)}_i<\infty$ for each $N\geqslant1$. We now prove that $ \varlimsup_{N\to \infty}\sup_{i\geqslant 1}y^{(N)}_i=\infty$. In fact, for an arbitrary $k\geqslant1$,
					\begin{displaymath}\begin{split}
						\varlimsup_{N\to \infty}\sup_{i\geqslant 1}y^{(N)}_i
						&\geqslant
						\varlimsup_{N\to \infty}x^{(N)}_k
                        =\varlimsup_{N\to \infty}\sum_{n=0}^{k-1}\bigl(F_n^{(0)}x^{(N)}_1 - d_n\bigr)\\
						&\geqslant
						\sum_{n=0}^{k-1}\bigl(F_n^{(0)}d - d_n\bigr).
					\end{split}\end{displaymath}
                Taking supremum with respect to $k$ on both sides, we obtain
                \begin{displaymath}
                    \varlimsup_{N\to \infty}\sup_{i\geqslant 1}y^{(N)}_i=\infty.
                \end{displaymath}
                The conditions of \Cref{in_serg_con} are satisfied by the sequences $\{y^{(N)}\}^{\infty}_{N=1}$ and $H=\{0\}$. So the $Q$-process is non-strongly ergodic.
        \end{proof}

    \subsection{A Special Class of Single Birth Processes}\label{sec_catastr}
        In this section, we study conservative single birth $Q$-matrix $Q=(q_{ij})$ with
            \begin{numcases}
                {q_{ij}=}
                    i+1,\quad \nonumber &if\/ $i\geqslant0,\enspace j=i+1$,\\
                    \alpha_i\geqslant0, &if\/ $i\geqslant1,\enspace j=0$,\nonumber\\
                    0, & other $i\neq j$\nonumber.
            \end{numcases}
        Assume there are infinitely many non-zero $\alpha_i$, so $Q$ is irreducible. The following illuminating example is a catalyst for this part.
        \begin{example}
            It is obvious that the $Q$-process is unique for arbitrary $\{\alpha_i\}_{i=1}^{\infty}$.
            \begin{enumerate}[\upshape (1)]
                \item If\/ $\alpha_i=\frac{1}{i^\gamma}$ for sufficiently large $i$, the $Q$-process is transient for $\gamma>0$.
                \item If\/ $\alpha_i=\frac{1}{\log^{\gamma}i}$ for sufficiently large $i$,
                    \begin{enumerate}[\upshape(a)]
                        \item the $Q$-process is transient for $\gamma>1$;
                        \item the $Q$-process is null recurrent for $\gamma=1$;
                        \item the $Q$-process is ergodic but non-exponentially ergodic for $\gamma \in(0,1)$.
                    \end{enumerate}
                \item If\/ $\alpha_i=\frac{1}{(\log\log i)^\gamma}$ for sufficiently large $i$, the $Q$-process is ergodic but non-exponentially ergodic for $\gamma>0$.\\
                \item If\/
                        \begin{numcases}
                            {\alpha_i=}
                                \tfrac{1}{i},\quad \nonumber &$i$ is an odd positive integer,\\
                                1, \nonumber &$i$ is an even positive integer,
                        \end{numcases}
                    the $Q$-process is strongly ergodic.
                \item The $Q$-process is strongly ergodic if\/ $\alpha_i\equiv1\,(i\geqslant1)$.
            \end{enumerate}
        \end{example}
        This example will be demonstrated via the following propositions.
            \begin{lemma}\label{cata_dis_rec}
                \begin{enumerate}[\upshape (1)]
                  \item Let $P=(P_{ij})$ be an irreducible conservative transition matrix on $\Z_+=\{0,1,2,\ldots\}$ with
        				\begin{numcases}
        					{P_{ij}=}
        						p_i,\quad \nonumber &if\/ $i\geqslant0,\enspace j=i+1$,\\
        						1-p_i, &if\/ $i\geqslant0,\enspace j=0$,\nonumber\\
        						0, & other $i,j\geqslant 0$\nonumber.
        				\end{numcases}
        			Then $P$ is recurrent iff\/ $\prod_{i=0}^{\infty}p_i=0$.
                  \item The $Q$-process mentioned above is recurrent iff\/ $\sum_{i=1}^{\infty}\frac{\alpha_i}{i}=\infty$.
                \end{enumerate}
    		\end{lemma}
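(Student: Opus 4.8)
The plan is to reduce both assertions to the transience/recurrence dichotomy for a single scalar series.

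For part~(1) I would compute the return probability to $0$ by hand. Starting from $0$, the only way the chain can stay away from $0$ at times $1,\dots,n+1$ is to climb $0\to1\to2\to\cdots\to n+1$, because from any state $i$ the unique transition that does not land on $0$ is the jump $i\to i+1$; hence $\Psub0[\sigma_0>n+1]=p_0p_1\cdots p_n$, where $\sigma_0$ is the first return time to the state $0$. Passing to the limit through the decreasing events $\{\sigma_0>m\}$ yields $\Psub0[\sigma_0=\infty]=\prod_{i=0}^\infty p_i$, and therefore (using irreducibility to go from the state $0$ to the whole chain) $P$ is recurrent iff $\prod_{i=0}^\infty p_i=0$. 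This argument makes no use of $p_i<1$, so it applies verbatim when some factors equal $1$. Alternatively one may invoke \Cref{dmc} with the bounded test function $z_i=1-\prod_{k\geqslant i}p_k$: it is nonincreasing in $i$ with $\inf_i z_i=0$, and $\sum_j P_{ij}z_j=z_i-(1-p_i)\prod_{k\geqslant 0}p_k\leqslant z_i$ for $i\geqslant1$, so it certifies transience precisely when $\prod_{k\geqslant0}p_k>0$.

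For part~(2) I would pass to the embedded chain $\Pi$ of the $Q$-process. From $q_{i,i+1}=i+1$ and $q_{i,0}=\alpha_i$ $(i\geqslant1)$ one reads off $\Pi_{0,1}=1$ and, for $i\geqslant1$, $\Pi_{i,i+1}=\tfrac{i+1}{i+1+\alpha_i}$, $\Pi_{i,0}=\tfrac{\alpha_i}{i+1+\alpha_i}$; thus $\Pi$ has exactly the form treated in part~(1) with $p_0=1$ and $p_i=\tfrac{i+1}{i+1+\alpha_i}$ for $i\geqslant1$. Since the $Q$-matrix is regular, the $Q$-process and $\Pi$ are recurrent or transient together (see, e.g., \cite{chen2004}), so by part~(1) the $Q$-process is recurrent iff $\prod_{i=1}^\infty\tfrac{i+1}{i+1+\alpha_i}=0$, that is, iff $\prod_{i=1}^\infty\bigl(1+\tfrac{\alpha_i}{i+1}\bigr)=\infty$. (The same identity also comes out of the single-birth recurrence criterion $\sum_n F_n^{(0)}=\infty$ recalled above, without invoking the embedded-chain fact: here $q_n^{(k)}=\alpha_n$ for $0\leqslant k<n$, so $F_n^{(0)}=\tfrac{\alpha_n}{n+1}\sum_{k=0}^{n-1}F_k^{(0)}$ with $F_0^{(0)}=1$, whence $\sum_{k=0}^n F_k^{(0)}=\prod_{k=1}^n\bigl(1+\tfrac{\alpha_k}{k+1}\bigr)$.)

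It then remains to verify the elementary equivalence $\prod_{i=1}^\infty\bigl(1+\tfrac{\alpha_i}{i+1}\bigr)=\infty\iff\sum_{i=1}^\infty\tfrac{\alpha_i}{i}=\infty$. Taking logarithms, the product diverges iff $\sum_i\log\bigl(1+\tfrac{\alpha_i}{i+1}\bigr)=\infty$, and I claim this holds iff $\sum_i\tfrac{\alpha_i}{i+1}=\infty$. One direction is $\log(1+x)\leqslant x$. For the other, suppose $\sum_i\tfrac{\alpha_i}{i+1}=\infty$: if $\alpha_i\geqslant i+1$ for infinitely many $i$, then $\log\bigl(1+\tfrac{\alpha_i}{i+1}\bigr)\geqslant\log2$ along a subsequence and the log-sum diverges; otherwise $\tfrac{\alpha_i}{i+1}<1$ for all large $i$, and $\log(1+x)\geqslant(\log2)x$ on $[0,1]$ shows the log-sum dominates $\log2$ times a tail of $\sum_i\tfrac{\alpha_i}{i+1}$, again divergent. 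Finally $\tfrac12\leqslant\tfrac{i}{i+1}\leqslant1$ gives $\sum_i\tfrac{\alpha_i}{i+1}=\infty\iff\sum_i\tfrac{\alpha_i}{i}=\infty$. I expect the only step requiring genuine care to be this last scalar equivalence in the case of unbounded $\{\alpha_i\}$, where $\log(1+x)$ and $x$ cease to be comparable up to a constant and the dichotomy ``$\alpha_i\geqslant i+1$ infinitely often or not'' is needed; the rest is bookkeeping — the one-line identity for $\Psub0[\sigma_0>n+1]$ (or the recursion for $F_n^{(0)}$) and the standard comparison of a $Q$-process with its embedded chain.
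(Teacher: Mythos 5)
Your proof is correct, and part (2) follows the same reduction as the paper's: pass to the embedded chain, which has exactly the form of part (1) with $p_0=1$ and $p_i=\tfrac{i+1}{i+1+\alpha_i}$, then translate $\prod_i p_i=0$ into $\sum_i\alpha_i/i=\infty$. Where you genuinely differ is part (1). You compute the escape probability directly, $\pb_0[\sigma_0>n+1]=p_0p_1\cdots p_n$, since the only trajectory avoiding $0$ is the deterministic climb, hence $\pb_0[\sigma_0=\infty]=\prod_{i\geqslant 0}p_i$; the paper instead invokes the harmonic-function recurrence/transience criteria (Theorems 4.24--4.25 of \cite{chen2004}), solving $(1-p_i)y_0+p_iy_{i+1}=y_i$ with $y_0=0$ to get $y_{i+1}=y_i/p_i$, which is unbounded exactly when $\prod_i p_i=0$. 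These are two faces of the same dichotomy --- your alternative test function $z_i=1-\prod_{k\geqslant i}p_k$ is essentially a normalization of the paper's explicit solution --- but your first-passage computation is self-contained, yields the exact value of the escape probability, and settles both directions at once, whereas the paper's route stays inside the test-function machinery it uses throughout (and your $z_i$ alternative, taken alone, would only certify the transience direction). You also spell out the elementary equivalence $\prod_i\bigl(1+\tfrac{\alpha_i}{i+1}\bigr)=\infty\iff\sum_i\tfrac{\alpha_i}{i}=\infty$, including the case of unbounded $\alpha_i$, which the paper asserts without proof; for nonnegative terms the two-sided bound $1+\sum_i x_i\leqslant\prod_i(1+x_i)\leqslant\exp\bigl(\sum_i x_i\bigr)$ would do the same job a little faster. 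No gaps.
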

    		\begin{proof}
    			a) By Theorems 4.24 and 4.25 in~\cite{chen2004}, we consider equation
    				\begin{equation}\label{cata_dis_rec_eq}
    					(1-p_i) y_0+p_i y_{i+1}=y_i,\qquad i\geqslant 1.
    				\end{equation}
    			Setting $y_0=0$, we obtain a recurrence relation:
    				\begin{displaymath}
    					y_{i+1}=\frac{1}{p_i}y_i,\qquad i\geqslant 1.
    				\end{displaymath}
                So \Cref{cata_dis_rec_eq} has a compact solution (non-constant bounded solution, respectively) if $\prod_{i=0}^{\infty}\frac{1}{p_i}=\infty$ ($<\infty$, respectively). This completes our proof.
                b) By the first assertion, the $Q$-process is recurrent iff $\prod_{i=1}^{\infty}\frac{i+1}{i+1+\alpha_i}=0$. Note that
        			\begin{displaymath}
                        \prod_{i=1}^{\infty}\frac{i+1}{i+1+\alpha_i}=0
                        \quad (\Longleftrightarrow )\quad
                        \sum_{i=1}^{\infty}\frac{\alpha_i}{i}=\infty.
        			\end{displaymath}
                The second assertion follows immediately.
    		\end{proof}

    	    \begin{lemma}\label{cata_in_eerg}
    			The $Q$-process is non-exponentially ergodic if\/ $\lim_{i\to\infty}\alpha_i=0$.
    		\end{lemma}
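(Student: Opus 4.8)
The plan is to apply \Cref{in_eerg_con} with $H=\{0\}$. Its hypothesis $\inf_{i\in E}q_i>0$ holds here, since $q_0=q_{01}=1$ and $q_i=(i+1)+\alpha_i\geqslant 2$ for $i\geqslant 1$, so $\inf_i q_i=1$. It therefore suffices to exhibit, for each $n\geqslant 1$, a number $\lambda_n\in(0,1)$ and a finitely supported function $y^{(n)}$ on $E$ solving \eqref{in_eerg_con_eq} such that $\lambda_n\to 0$ and $\sup_{n}y^{(n)}_0=\infty$.

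\textbf{Construction of the testing sequences.} I would set $\lambda_n=\frac{1}{n+2}$ and, for a cutoff $m_n$ to be fixed below, define
\[
    y^{(n)}_i=\sum_{k=i}^{m_n}\Bigl(\prod_{j=i}^{k-1}\frac{j+1}{q_j-\lambda_n}\Bigr)\frac{1}{q_k-\lambda_n}\quad(1\leqslant i\leqslant m_n),\qquad y^{(n)}_0=\frac{y^{(n)}_1+1}{1-\lambda_n},\qquad y^{(n)}_i=0\quad(i>m_n).
\]
This is precisely the minimal solution of the truncated equation \eqref{eloc} (with $\lambda=\lambda_n$, $N=m_n$) unwound, extended by $0$; but we need only that it solves \eqref{in_eerg_con_eq}, which is a routine substitution: from any state only $i\to i+1$ and $i\to 0$ occur, and $0\in H$ is dropped from the sums, so the right-hand side of \eqref{in_eerg_con_eq} at index $i$ is $\frac{i+1}{q_i-\lambda_n}y_{i+1}+\frac{1}{q_i-\lambda_n}$ (and $\frac{1}{1-\lambda_n}(y_1+1)$ at $i=0$); one gets equality for $0\leqslant i\leqslant m_n$ and, for $i>m_n$, the right-hand side is the positive number $1/(q_i-\lambda_n)>0=y^{(n)}_i$. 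Also $y^{(n)}$ is finitely supported, so conditions (1)--(2) of \Cref{in_eerg_con} hold provided $m_n$ exists; it remains to guarantee condition (3).

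\textbf{The crux, and the role of $\alpha_i\to 0$.} Write $a_k=\prod_{j=1}^{k-1}\frac{j+1}{q_j-\lambda_n}$ (so $a_1=1$); using $a_{k+1}=a_k\frac{k+1}{q_k-\lambda_n}$ one has $y^{(n)}_1=\sum_{k=1}^{m_n}\frac{a_k}{q_k-\lambda_n}=\sum_{k=1}^{m_n}\frac{a_{k+1}}{k+1}$. The main point is that this tends to $\infty$ as $m_n\to\infty$, i.e.\ $\sum_{k\geqslant 1}\frac{a_{k+1}}{k+1}=\infty$ (equivalently $e_{10}(\lambda_n)=\infty$). To see this, fix $\lambda_0\in(0,\lambda_n)$; since $\alpha_j\to 0$ there is $j_1$ with $\alpha_j\leqslant\lambda_n-\lambda_0$ for $j\geqslant j_1$, whence for such $j$
\[
    \frac{j+1}{q_j-\lambda_n}=\frac{j+1}{j+1+\alpha_j-\lambda_n}\geqslant\frac{j+1}{j+1-\lambda_0}\geqslant 1+\frac{\lambda_0}{j+1},
\]
so $\log a_{k+1}\geqslant \frac{\lambda_0}{2}\log(k+2)-C$ and hence $a_{k+1}\geqslant c\,(k+2)^{\delta}$ for suitable constants $c>0$, $\delta>0$; therefore $\sum_k\frac{a_{k+1}}{k+1}\geqslant c\sum_k(k+1)^{\delta-1}=\infty$. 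Consequently $y^{(n)}_0\to\infty$ as $m_n\to\infty$, so we may fix $m_n$ with $y^{(n)}_0\geqslant n$, giving $\sup_n y^{(n)}_0=\infty$; by \Cref{in_eerg_con} the $Q$-process is non-exponentially ergodic.

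This divergence estimate is the only delicate step, and it is exactly where $\alpha_i\to 0$ is used in an essential way: if instead, say, $\alpha_i\equiv 1$, then every factor $\frac{j+1}{q_j-\lambda}=\frac{j+1}{j+2-\lambda}$ is $<1$ for $\lambda<1$, the series converges, $e_{00}(\lambda)<\infty$ for small $\lambda$, and the process is in fact strongly (a fortiori exponentially) ergodic. One can also bypass the truncation bookkeeping entirely: with $f_k:=\E_k\mkern-1.5mu\ue^{\lambda\sigma_0}\in[1,\infty]$, first-step analysis gives $f_0=f_1/(1-\lambda)$ and $f_k(q_k-\lambda)=(k+1)f_{k+1}+\alpha_k\geqslant(k+1)f_{k+1}$ for $k\geqslant1$; iterating and using $f_m\geqslant 1$ yields $f_1\geqslant\prod_{j=1}^{m-1}\frac{j+1}{q_j-\lambda}$, which tends to $\infty$ by the same estimate, so $\E_0\mkern-1.5mu\ue^{\lambda\sigma_0}=\infty$ for every $\lambda\in(0,1)=(0,\inf_i q_i)$.
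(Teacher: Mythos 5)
Your proof is correct, and it takes a route that differs from both arguments the paper gives for \Cref{cata_in_eerg}. The paper's primary proof never touches \Cref{in_eerg_con}: it first establishes non-strong ergodicity via \Cref{in_serg_con} (with test functions $1/\alpha_i$, after reducing to monotone $\alpha_i$ by a comparison with the matrix built from $\sup_{k\geqslant i}\alpha_k$), and then argues by contradiction that exponential ergodicity of this particular chain would force the minimal solution of \Cref{exp_rec} to be bounded and hence the chain to be strongly ergodic. The paper's alternative proof does invoke \Cref{in_eerg_con}, as you do, but it hand-crafts the test functions through a piecewise prescription of the increments $d_i^{(n)}$ and still relies on the monotonicity reduction to locate the threshold $M_1$ where $\alpha_i$ drops below $\lambda_n$. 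You instead take the truncated minimal solutions of \Cref{eloc}, unwound explicitly and extended by zero, as the test functions, and reduce the whole matter to the single analytic fact that $\prod_{j}\frac{j+1}{q_j-\lambda}$ diverges once $\alpha_j\to 0$ (via the bound $\frac{j+1}{j+1+\alpha_j-\lambda}\geqslant 1+\frac{\lambda_0}{j+1}$ for large $j$). This eliminates the monotonicity reduction entirely and isolates exactly where the hypothesis $\alpha_i\to 0$ enters; your verification that the extended truncation solves \Cref{in_eerg_con_eq} (equality up to the cutoff, trivial inequality beyond it) is sound. Your closing observation even gives a self-contained elementary proof --- iterating the first-step identity $f_k(q_k-\lambda)=(k+1)f_{k+1}+\alpha_k$ to get $\E_0\mkern-1.5mu\ue^{\lambda\sigma_0}=\infty$ for every $\lambda\in(0,1)$ --- which bypasses \Cref{in_eerg_con} altogether, at the cost of not illustrating the criterion this section is meant to showcase.
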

    		\begin{proof}
                First, we deal with a special case: $\{\alpha_i\}_{i=1}^{\infty}$ is monotonically decreasing. For a fixed $n\geqslant 1$, we set
    				\begin{numcases}
    					{y^{(n)}_i=}
    						1/\alpha_i,\quad \nonumber &$1\leqslant i \leqslant n$, \\
    						1/\alpha_n, &$i\geqslant n+1$.\nonumber
    				\end{numcases}
                It is straightforward to check that $y^{(n)}=\bigl(y^{(n)}_i\bigr)_{i\geqslant1}$ satisfies
    				\begin{displaymath}
                        (i+1+\alpha_i)y^{(n)}_i\leqslant(i+1)y^{(n)}_{i+1}+1,\qquad i\geqslant 1.
    				\end{displaymath}
                So $\{ y^{(n)}\}^{\infty}_{n=1}$ is a sequence satisfying all conditions of \Cref{in_serg_con}. The $Q$-process is non-strongly ergodic.
    			\par
                Now if the $Q$-process is exponentially ergodic, by \Cref{con_mini_exp}, the following \Cref{exp_rec} has a finite non-negative solution $(x_i)_{i\geqslant1}$ for some $\lambda\in (0,1)$.
    				\begin{equation}\label{exp_rec}
                        x_i=\frac{i+1}{i+1+\alpha_i-\lambda}x_{i+1}+\frac{1}{i+1+\alpha_i-\lambda},\quad i\geqslant 1.
    				\end{equation}
    			Equivalently,
    				\begin{displaymath}
    					x_{i+1}=\frac{i+1+\alpha_i-\lambda}{i+1}x_i-\frac{1}{i+1},\quad i\geqslant 1.
    				\end{displaymath}
                Because $\lim_{i\to\infty}\alpha_i=0$, we have $x_{i+1}\leqslant x_i$ for sufficiently large $i$.
                So $(x_i)_{i\geqslant1}$ is bounded. Consequently, $\bigl(\frac{1}{\lambda}(\E_i\mkern-1.5mu \ue^{\lambda\sigma_0}-1)\bigr)_{i\geqslant 1}$ is bounded since it is the minimal non-negative solution to \Cref{exp_rec}. Hence $\bigl(\E_i\mkern-1.5mu \ue^{\lambda\sigma_0}\bigr)_{i\geqslant 1}$ is bounded and so is $(\E_i\mkern-1.5mu \sigma_0)_{i\geqslant 1}$. The $Q$-process is thus strongly ergodic. This is impossible. The $Q$-process is therefore non-exponentially ergodic.
    			\par
                In general case where $\{\alpha_i\}_{i=1}^{\infty}$ may not be monotonically decreasing, we define conservative $\widetilde{Q}=(\widetilde{q}_{ij})$:
    				\begin{numcases}
    					{\widetilde{q}_{ij}=}
    						i+1,\quad \nonumber &if\/ $i\geqslant0,\enspace j=i+1$,\\
    						\sup_{k\geqslant i}\alpha_k, &if\/ $i\geqslant1,\enspace j=0$,\nonumber\\
    						0, & other $i\neq j$\nonumber.
    				\end{numcases}
    			Because
    			\begin{displaymath}
                    \lim_{i\to\infty}\bigl(\sup_{k\geqslant i}\alpha_k\bigr)=\varlimsup_{i\to\infty}\alpha_i=\lim_{i\to\infty}\alpha_i=0,
    			\end{displaymath}
                the $\widetilde{Q}$-process is non-exponentially ergodic according to above discussions. Consequently, the $Q$-process is non-exponentially ergodic
                by comparison. Our proof is now complete.
    		\end{proof}
            The above proof is based on \Cref{in_serg_con}, we may give a more direct proof using \Cref{in_eerg_con}.
            \begin{proof}[Alternative Proof of $\Cref{cata_in_eerg}$]
                Without loss of generality, we assume that $ \lim_{i\to\infty} {\hskip -5pt}\downarrow\alpha_i = 0$. First, set
    				\begin{displaymath}
    					\lambda_n=\frac{1}{n+1},\qquad n\geqslant 1.
    				\end{displaymath}
    			For each fixed positive integer $n$, by \Cref{in_eerg_con}, we consider
    				\begin{displaymath}\begin{split}
                        y_0^{(n)} &\leqslant \frac{1}{1-\lambda_n}y_1^{(n)} +\frac{1}{1-\lambda_n},\\
                        y_i^{(n)} &\leqslant \frac{i+1}{i+1+\alpha_i-\lambda_n}y_{i+1}^{(n)} +\frac{1}{i+1+\alpha_i-\lambda_n},\qquad i\geqslant 1.
    				\end{split}\end{displaymath}
                Introducing a change of variable $d_i^{(n)}=y_{i+1}^{(n)}-y_i^{(n)}\,(i\geqslant 0)$,
                the above inequality is transformed  into
                    \begin{displaymath}\begin{split}
                        d_0^{(n)} &\geqslant -\lambda_ny_0^{(n)}-1,\\
                        d_i^{(n)} &\geqslant \frac{1}{i+1}(\alpha_i-\lambda_n)y_i^{(n)}-\frac{1}{i+1},\qquad i\geqslant 1.
                    \end{split}\end{displaymath}
                Put $y_0^{(n)}=n$. As $\lim_{i\to\infty} {\hskip -5pt}\downarrow\alpha_i = 0$, there exists $M_1$ such that
                    \begin{displaymath}\begin{split}
                        \alpha_i &\geqslant \lambda_n,\qquad 1 \leqslant i \leqslant M_1-1,\\
                        \alpha_i &< \lambda_n,\qquad  i \geqslant M_1.
                    \end{split}\end{displaymath}
                If we place
                    \begin{displaymath}\begin{split}
                        d_0^{(n)} &= 0,\\
                        d_i^{(n)} &=\frac{\alpha_i-\lambda_n}{i+1}y_i^{(n)},\qquad 1 \leqslant i \leqslant M_1-1,
                    \end{split}\end{displaymath}
                then
                    \begin{displaymath}
                        n=y_0^{(n)}=y_1^{(n)}\leqslant y_2^{(n)}\leqslant \cdots \leqslant y_{M_1}^{(n)}.
                    \end{displaymath}
                Furthermore, we may pick $M_2> M_1$ such that
                    \begin{displaymath}
                        y_{M_1}^{(n)}-\frac{1}{M_1+1}-\cdots-\frac{1}{M_2}\geqslant 0,
                    \end{displaymath}
                    \begin{displaymath}
                        y_{M_1}^{(n)}-\frac{1}{M_1+1}-\cdots-\frac{1}{M_2}-\frac{1}{M_2+1}<0.
                    \end{displaymath}
                Meanwhile, let
                    \begin{displaymath}\begin{split}
                        d^{(n)}_k&=-\frac{1}{k+1},\qquad M_1\leqslant k \leqslant M_2-1,\\
                        d^{(n)}_{M_2}&=-y^{(n)}_{M_2},\\
                        d^{(n)}_k&=0,\qquad k\geqslant M_2+1.\\
                    \end{split}\end{displaymath}
                Thus $y^{(n)}_k=0\,(k>M_2)$.
                \par
                Now, one may check that $\{\lambda_n\}_{n=1}^{\infty}$ coupled with $\{y^{(n)}\}^{\infty}_{n=1}$ are sequences satisfying conditions in \Cref{in_eerg_con}. The $Q$-process is non-exponentially ergodic.
            \end{proof}

            \begin{corollary}\label{temp_flag}
    			Let $\alpha_i=\frac{1}{\log^{\gamma}i}\,(i\geqslant3)$.
    				\begin{enumerate}[\upshape (1)]
    					\item The $Q$-process is ergodic for $\gamma\in (0,1)$.
    					\item The $Q$-process is null recurrent for $\gamma=1$.
    				\end{enumerate}
    		\end{corollary}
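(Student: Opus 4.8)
The plan is to reduce everything to the explicit ergodicity criterion \Cref{explct_erg} (the $Q$-process is ergodic iff $d<\infty$) together with the recurrence criterion in the second assertion of \Cref{cata_dis_rec}; regularity of $Q$, needed to invoke \Cref{explct_erg}, is already recorded in the example preceding this corollary. First I would exploit the structure of this single birth matrix: in row $n\geqslant 1$ the only nonzero off-diagonal entries are $q_{n,n+1}=n+1$ and $q_{n0}=\alpha_n$, so $q_n^{(k)}=\alpha_n$ for every $0\leqslant k\leqslant n-1$. Writing $S_n=\sum_{k=0}^n F_k^{(0)}$ and $T_n=\sum_{k=0}^n d_k$, the defining recursions collapse to $F_n^{(0)}=\tfrac{\alpha_n}{n+1}S_{n-1}$ and $d_n=\tfrac{1}{n+1}\bigl(1+\alpha_n T_{n-1}\bigr)$, hence to the two-term recursions $S_n=(1+\tfrac{\alpha_n}{n+1})S_{n-1}$ with $S_0=1$ and $T_n=(1+\tfrac{\alpha_n}{n+1})T_{n-1}+\tfrac{1}{n+1}$ with $T_0=0$. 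Dividing one by the other gives the telescoping identity $\tfrac{T_n}{S_n}=\tfrac{T_{n-1}}{S_{n-1}}+\tfrac{1}{(n+1)S_n}$, and since $T_k/S_k$ increases in $k$ we obtain the clean closed form
\begin{displaymath}
    d=\sup_{k\geqslant 0}\frac{\sum_{n=0}^k d_n}{\sum_{n=0}^k F_n^{(0)}}=\sum_{n=1}^\infty\frac{1}{(n+1)S_n},\qquad S_n=\prod_{k=1}^n\Bigl(1+\frac{\alpha_k}{k+1}\Bigr).
\end{displaymath}

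Next I would extract the asymptotics of $S_n$ for $\alpha_k=1/\log^\gamma k$. Using $\log(1+x)=x+O(x^2)$ together with $\sum_k(\alpha_k/(k+1))^2<\infty$ gives $\log S_n=\sum_{k=1}^n\tfrac{\alpha_k}{k+1}+O(1)$, and the values of $\alpha_1,\alpha_2$ are irrelevant since they only shift $\log S_n$ by $O(1)$. For $\gamma\in(0,1)$, comparison with $\int^n\tfrac{dx}{x\log^\gamma x}=\tfrac{(\log n)^{1-\gamma}}{1-\gamma}+O(1)$ yields $\log S_n\geqslant c(\log n)^{1-\gamma}$ for some $c>0$ and all large $n$, so $\sum_n\tfrac{1}{(n+1)S_n}\leqslant\sum_n\tfrac{1}{n+1}\exp\bigl(-c(\log n)^{1-\gamma}\bigr)<\infty$ (the substitution $n=e^u$ reduces this to $\int_1^\infty\exp(-cu^{1-\gamma})\,du<\infty$, finite because $1-\gamma>0$); hence $d<\infty$ and the $Q$-process is ergodic. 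For $\gamma=1$, the same comparison gives $\log S_n\leqslant\log\log n+C$, so $S_n\leqslant C'\log n$ and $\sum_n\tfrac{1}{(n+1)S_n}\geqslant\tfrac{1}{C'}\sum_n\tfrac{1}{(n+1)\log n}=\infty$, so $d=\infty$ and the process is non-ergodic; since $\sum_i\tfrac{\alpha_i}{i}=\sum_i\tfrac{1}{i\log i}=\infty$, the second assertion of \Cref{cata_dis_rec} gives recurrence (equivalently $\sum_n F_n^{(0)}=\lim_n S_n=\infty$), so the process is null recurrent.

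The routine but delicate part is organizing the asymptotic estimates so that only a one-sided bound on $\log S_n$ is needed in each case — a lower bound $\log S_n\geqslant c(\log n)^{1-\gamma}$ to force convergence of $\sum\tfrac{1}{(n+1)S_n}$ when $\gamma\in(0,1)$, and an upper bound $\log S_n\leqslant\log\log n+O(1)$ to force divergence when $\gamma=1$ — and then checking $\int_1^\infty\exp(-cu^{1-\gamma})\,du<\infty$, which is a Gamma-type integral after the substitution $v=u^{1-\gamma}$. Everything else (the collapse of the $F$- and $d$-recursions, the telescoping identity for $T_n/S_n$, and the integral-test comparisons for $\sum\tfrac{1}{(k+1)\log^\gamma k}$) is elementary bookkeeping.
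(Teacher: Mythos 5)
Your proposal is correct, and it verifies both assertions, but it takes a partially different route from the paper. For the case $\gamma=1$ the two arguments are close in spirit: the paper also reduces to \Cref{explct_erg} and bounds $d$ from below by $\sum_{k\geqslant 1}\bigl((k+1)\prod_{\ell=1}^k(1+\tfrac{\alpha_\ell}{\ell+1})\bigr)^{-1}$, but it gets there via the O'Stolz theorem, a cited explicit expression for $F_i^{(k)}$, and Kummer's test, whereas you obtain the stronger \emph{exact} identity $d=\sum_{n\geqslant1}\tfrac{1}{(n+1)S_n}$ by collapsing the $F$- and $d$-recursions and telescoping $T_n/S_n$ — a cleaner and self-contained derivation. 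For the case $\gamma\in(0,1)$ the routes genuinely diverge: the paper does \emph{not} use the explicit criterion at all, but instead exhibits the Lyapunov function $y_i=\log^{2\gamma}i$ and invokes the classical drift condition for ergodicity (Chen, Theorem 4.45(1)), with a one-line mean-value estimate. Your approach instead needs the equality for $d$ (a one-sided lower bound, which is all the paper proves, would not suffice to conclude $d<\infty$), plus the integral estimate $\int_1^\infty \exp(-cu^{1-\gamma})\,\mathrm{d}u<\infty$. What your method buys is uniformity — one closed formula settles both regimes and in fact characterizes ergodicity for arbitrary $\{\alpha_i\}$ in this class — at the price of slightly more bookkeeping; the paper's Lyapunov function is quicker once guessed but gives no information in the borderline case, which is why the paper has to switch methods between the two parts.
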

    		\begin{proof}
    			a) When $\gamma\in (0,1)$, we set
    					$y_i=\log^{2\gamma}i\,(i\geqslant 3)$.
    			Then for sufficiently large $i$,
    				\begin{displaymath}
    					(i+1+\alpha_i)y_i \geqslant (i+1)y_{i+1}+1.
    				\end{displaymath}
    				In fact, for large $i$, by Lagrange mean value theorem,
    				\begin{displaymath}
    					(i+1)\bigl(\log^{2\gamma}(i+1)-\log^{2\gamma}i\bigr)
    					\leqslant
    					2\gamma\frac{i+1}{i}\log^{2\gamma-1}(i+1) \leqslant \log^{\gamma}i-1.
    				\end{displaymath}
    				Thus, the $Q$-process is ergodic for $\gamma\in (0,1)$ by \cite[Teorem 4.45(1)]{chen2004}.
                b) To obtain the second assertion, we try to exploit \Cref{explct_erg}. Using the O'Stolz theorem and the explicit expression of $F_i^{k)}$ in \cite[Example 8.2]{chenzhang2014}, we have
    				\begin{displaymath}\begin{split}
    					d&=\sup_{i\geqslant 0}\frac{\sum_{k=0}^{i}d_k}{\sum_{k=0}^{i}F_k^{(0)}}
    					\geqslant \lim_{i\to \infty}\frac{\sum_{k=0}^{i}d_k}{\sum_{k=0}^{i}F_k^{(0)}}
                        =\lim_{i\to \infty}\frac{d_i}{F_i^{(0)}}\\
                        &=\lim_{i\to \infty}\frac{\sum_{k=1}^i\frac{F_i^{(k)}}{q_{k,k+1}}}{F_i^{(0)}}\geqslant \lim_{i\to\infty} \sum_{k=1}^{i-1}\frac{1}{(k+1)\prod_{\ell=1}^k(1+\frac{\alpha_l}{\ell+1})}\\
                        &=\sum_{k=1}^{\infty}\frac{1}{(k+1)\prod_{\ell=1}^k(1+\frac{\alpha_l}{\ell+1})}.\\
    				\end{split}\end{displaymath}
                    Now, by Kummer's test, one may see $d=\infty$ for $\alpha_i=\frac{1}{\log i}\,(i\geqslant3)$. The $Q$-process is therefore non-ergodic.
            \end{proof}
            \begin{lemma}
                Let $Q$ be an irreducible regular $Q$-matrix and assume the $Q$-process is recurrent. If\/ $\inf_{i\geqslant 1} q_{i0}>0$, then the $Q$-process is strongly ergodic.
    		\end{lemma}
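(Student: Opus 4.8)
The plan is to exhibit an explicit bounded test function for the classical strong-ergodicity criterion recalled in the introduction (\cite{hou1988,twd1981}), with $H=\{0\}$ there. Equivalently, one may argue entirely inside the minimal-solution framework of \Cref{sec_mini_soln}: by \Cref{con_min1} the vector $(\E_i\mkern-1.5mu\sigma_0)_{i\in E}$ is the minimal non-negative solution of $x_i=\sum_{j\neq 0}\Pi_{ij}x_j+1/q_i$, so by the Comparison Principle \Cref{mini_cmprs} it suffices to produce one bounded non-negative supersolution of this equation; boundedness of $(\E_i\mkern-1.5mu\sigma_0)_{i\neq 0}$ is exactly strong ergodicity.

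Set $\varepsilon\coloneqq\inf_{i\geqslant 1}q_{i0}>0$. I would try the constant test function $y_0=0$, $y_i=1/\varepsilon$ for $i\geqslant 1$ (for the supersolution version one instead takes $y_0=1/\varepsilon+1/q_0$, so the inequality also holds trivially at the state $0$). For $i\geqslant 1$, using $q_i=\sum_{j\neq i}q_{ij}$ and hence $\sum_{j\neq i,\,j\neq 0}q_{ij}=q_i-q_{i0}$, one computes
\[
\sum_{j\in E}q_{ij}y_j=-q_iy_i+\sum_{j\neq i,\,j\neq 0}q_{ij}y_j+q_{i0}\cdot 0=\frac{1}{\varepsilon}\bigl(-q_i+(q_i-q_{i0})\bigr)=-\frac{q_{i0}}{\varepsilon}\leqslant -1,
\]
the last inequality because $q_{i0}\geqslant\varepsilon$. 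Thus $(y_i)_{i\in E}$ is a bounded solution of $\sum_{j}q_{ij}y_j\leqslant -1$ for $i\notin\{0\}$; the same computation (with the modified $y_0$, for which $\sum_{j\neq 0}\Pi_{0j}y_j+1/q_0=1/\varepsilon+1/q_0=y_0$) shows $y$ is a non-negative supersolution of the equation in \Cref{con_min1}.

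Then the classical criterion gives strong ergodicity immediately; routing through \Cref{mini_cmprs} instead, one concludes $\E_i\mkern-1.5mu\sigma_0\leqslant 1/\varepsilon$ for every $i\geqslant 1$, so $(\E_i\mkern-1.5mu\sigma_0)_{i\geqslant1}$ is bounded and the $Q$-process is strongly ergodic. There is essentially no obstacle beyond guessing the right test function; the only bookkeeping point is to make the chosen function also satisfy the inequality at the state $0$ when one uses the Comparison Principle. Note, incidentally, that the recurrence hypothesis is not actually used: strong ergodicity is produced directly and in particular forces recurrence; it is kept only because recurrence is a standing assumption throughout the paper.
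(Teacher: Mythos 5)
Your proof is correct and is essentially the paper's own argument: the paper likewise takes the constant test function $1/c$ with $c\in(0,\inf_{i\geqslant1}q_{i0})$, verifies it is a bounded supersolution of $y_i\geqslant\sum_{j\geqslant1,\,j\neq i}\Pi_{ij}y_j+1/q_i$ for $i\geqslant1$, and invokes the classical strong-ergodicity criterion (Chen's Theorem 4.45(3)). Your additional bookkeeping at state $0$ and the observation that recurrence is not needed are fine but do not change the substance.
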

    		\begin{proof}
                Take $c\in (0,\inf_{i\geqslant 1} q_{i0})$, then
    				\begin{displaymath}
                        \frac{1}{c}\geqslant \frac{1}{c}+\frac{1-\frac{q_{i0}}{c}}{q_i}=\frac{1}{c}(1-\frac{q_{i0}}{q_i})+\frac{1}{q_i}
                        =\sum_{
                                        \begin{subarray}{c}
                                            j\geqslant 1\\
                                            j\neq i
                                        \end{subarray}}\frac{q_{ij}}{q_i}\frac{1}{c}+\frac{1}{q_i},\qquad i\geqslant 1.
    				\end{displaymath}
    			So the $Q$-process is strongly ergodic by \cite[Teorem 4.45(3)]{chen2004}.
    		\end{proof}
            \begin{lemma}
                Suppose $\{\alpha_i\}_{i=1}^{\infty}$ has a subsequence $\{\alpha_{i_k}\}_{k=1}^{\infty}$ satisfying
    			\begin{displaymath}
    				\inf_{k\geqslant 1}\alpha_{i_k}>0,\quad
    				\sup_{k\geqslant 1}\frac{i_{k+1}}{i_k}<\infty,\quad
    				\sum_{k=1}^\infty \frac{1}{i_k}=\infty.
    			\end{displaymath}
    			Then the $Q$-process is strongly ergodic.
    		\end{lemma}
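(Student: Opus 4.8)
The plan is to verify the classical strong–ergodicity criterion \cite[Theorem 4.45(3)]{chen2004}, i.e.\ to produce a bounded $(y_i)_{i\geqslant 0}$ with $\sum_{j}q_{ij}y_j\leqslant -1$ for $i\geqslant 1$; by \Cref{mini_cmprs} this is equivalent to $\sup_{i\geqslant 1}\E_i\sigma_0<\infty$, and it is this last inequality that I would establish directly (one could instead run everything through the explicit single birth criterion \Cref{explct_serg}, but the hitting–time route is more transparent here).

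First I would reduce to a canonical worst case. Set $\delta=\inf_k\alpha_{i_k}>0$ and let $\widetilde Q$ be the $Q$-matrix with the same birth rates $i+1$ but with $\widetilde\alpha_i=\delta$ for $i\in\{i_k\}$ and $\widetilde\alpha_i=0$ otherwise, so $\widetilde\alpha_i\leqslant\alpha_i$ for every $i$. Both $(\E_i\sigma_0)_{i\geqslant 1}$ and $(\E^{\widetilde Q}_i\sigma_0)_{i\geqslant 1}$ are the minimal non-negative solutions of $x_i=\tfrac{i+1}{i+1+\alpha_i}x_{i+1}+\tfrac1{i+1+\alpha_i}$ and of the corresponding equation for $\widetilde Q$; lowering $\alpha_i$ raises both the coefficient of $x_{i+1}$ and the free term, so $(\E^{\widetilde Q}_i\sigma_0)$ solves the controlling inequality of the equation for $(\E_i\sigma_0)$, and \Cref{mini_cmprs} gives $\E_i\sigma_0\leqslant\E^{\widetilde Q}_i\sigma_0$. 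The $\widetilde Q$-process stays recurrent because $\sum_k\delta/i_k=\infty$. Hence it suffices to bound $\E^{\widetilde Q}_i\sigma_0$, and I may assume $\alpha_i=\delta\,\mathbf{1}_{\{i_k\}}(i)$.

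Next I would set up a cycle recursion. Between consecutive catastrophe sites the motion is pure birth, so from $i_k$ the process either jumps to $0$ (probability $p_k=\delta/(i_k+1+\delta)$) or climbs through $i_k+1,\dots$ up to $i_{k+1}$; writing $T_k=\E_{i_k}\sigma_0$ this gives $T_k=\tau_k+(1-p_k)T_{k+1}$ with $\tau_k=\tfrac1{i_k+1+\delta}+(1-p_k)\sum_{j=i_k+1}^{i_{k+1}-1}\tfrac1{j+1}$, hence $T_k=\sum_{r\geqslant 0}\tau_{k+r}\prod_{s=0}^{r-1}(1-p_{k+s})$. For $i_k<i<i_{k+1}$ one has $\E_i\sigma_0=T_{k+1}+\sum_{m=i}^{i_{k+1}-1}\tfrac1{m+1}\leqslant T_{k+1}+\log L$ (only birth occurs), and the finitely many $i\leqslant i_1$ are harmless, so strong ergodicity reduces to $\sup_k T_k<\infty$. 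Here $\tau_{k+r}\leqslant\sum_{m=i_{k+r}}^{i_{k+r+1}-1}\tfrac1{m+1}$ and $\prod_{s=0}^{r-1}(1-p_{k+s})=\prod_{s=0}^{r-1}\tfrac{i_{k+s}+1}{i_{k+s}+1+\delta}$.

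The hard part will be the uniform-in-$k$ estimate $\sum_{r\geqslant 0}\tau_{k+r}\prod_{s=0}^{r-1}(1-p_{k+s})\leqslant C$. Crudely $\tau_j\lesssim\log L$ (from $i_{j+1}/i_j\leqslant L$) and $-\log(1-p_j)\asymp\delta/i_j$, whence $\prod_{s=0}^{r-1}(1-p_{k+s})\leqslant\exp\!\bigl(-c\delta\sum_{s=0}^{r-1}i_{k+s}^{-1}\bigr)$; but these two bounds together with $i_{k+s}\leqslant L^{s}i_k$ only give a scale-dependent geometric series, so the point is to exploit $\sum_k i_k^{-1}=\infty$ jointly with the bounded-ratio condition — to show that $\sum_{s=0}^{r-1}i_{k+s}^{-1}$ grows fast enough, while on the stretches where $i_{k+\cdot}$ grows quickly $\tau_{k+r}$ is correspondingly smaller than the crude $\log L$. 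An Abel summation pairing the increments $\tau_{k+r}$ against the decay of $\prod_{s}(1-p_{k+s})$, performed uniformly in $k$, then closes the estimate, and the reduction step transfers the bound back to the original $Q$-process.
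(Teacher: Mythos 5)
Your reduction to $\alpha_i=\delta\,\mathbf{1}_{\{i_k\}}(i)$ via \Cref{mini_cmprs} is correct, and so is the cycle recursion $T_k=\sum_{r\geqslant0}\tau_{k+r}\prod_{s<r}(1-p_{k+s})$. The problem is precisely the step you defer to the end: under the stated hypotheses the uniform bound $\sup_kT_k<\infty$ is not merely hard, it is false, so no Abel-summation pairing can close it. Interleave ``doubling stretches'' with ``slow stretches'': from a large $N_K$ put $i_{K+s}=2^sN_K$ for $s=0,\dots,m_K$, then continue with consecutive integers until the harmonic sum over that block exceeds $1$, then begin the next doubling stretch, with $m_K\to\infty$. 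All ratios $i_{k+1}/i_k$ are $\leqslant 2$, $\sum_k1/i_k=\infty$, and with $\alpha_i=\delta$ on $\{i_k\}$ and $0$ elsewhere the chain is recurrent by \Cref{cata_dis_rec}. Yet
\begin{displaymath}
\E_{N}\sigma_0=\sum_{i\geqslant N}\frac{1}{i+1+\alpha_i}\prod_{j=N}^{i-1}\frac{j+1}{j+1+\alpha_j},
\end{displaymath}
and for $N=N_K$ every product appearing with $i\leqslant 2^{m_K}N_K$ is at least $1-\sum_s\delta 2^{-s}/N_K\geqslant 1/2$, while $\sum_{i=N_K}^{2^{m_K}N_K}(i+1+\delta)^{-1}\geqslant m_K\log2-1$; hence $\E_{N_K}\sigma_0\geqslant\tfrac12(m_K\log2-1)\to\infty$ and the process is \emph{not} strongly ergodic. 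What your heuristic misses is that a bounded \emph{ratio} allows $\tau_{k+r}\asymp 1$ to persist over arbitrarily many cycles while $\sum_{s<r}1/i_{k+s}=O(1/i_k)$, so the exponential decay never engages. Your scheme does work under the stronger hypothesis of bounded \emph{gaps}, $\sup_k(i_{k+1}-i_k)\leqslant M<\infty$: then $\tau_{k+r}\leqslant M/(i_{k+r}+1)$, and the exact identity $\sum_{r\geqslant0}p_{k+r}\prod_{s<r}(1-p_{k+s})=1$ gives $T_k\leqslant M(1+\delta)/\delta$ at once.

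This obstruction is not an artifact of your route. The paper instead compares with a chain $\widetilde Q$ that jumps from $i_k$ \emph{directly} to $i_{k+1}$ at rate $i_k+1$ (thus never paying the $\log(i_{k+1}/i_k)$ climbing cost, which is exactly where your $T_k$ blows up) and transfers the bound back by an order-preserving coupling; but that coupling requires the rate $\alpha_i-c\geqslant 0$ at every state $i$ the first marginal may occupy while the second sits at some $i_k$, which fails in the example above where $\alpha_i=0$ off the subsequence. So before attempting to repair your final estimate you should strengthen the hypothesis (bounded gaps, or $\inf_{i\geqslant1}\alpha_i>0$ as in the preceding lemma) or restrict the claim; as stated, the statement itself does not survive the test case that your own (correct) reduction and cycle formula produce.
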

    		\begin{proof}
                For ease of notation, we write $i_0=0$. Define conservative $\widetilde{Q}=(\widetilde{q}_{ij})$:
    				\begin{numcases}
    					{\widetilde{q}_{ij}=}
    						i+1,\qquad &if\/ $i=i_k,\enspace j=i_{k+1}$ for some $k\geqslant0$,\nonumber \\
    						i+1, &if\/ $i_k<i<i_{k+1}$ for some $k\geqslant0$,\enspace $j=i+1$,\nonumber \\
                            c\coloneqq  \tfrac{1}{2}\inf_{k\geqslant 1}\alpha_{i_k}, &if\/ $i=i_k$ for some $k\geqslant1$,\enspace $j=0$,\nonumber\\
    						0, & other $i\neq j$\nonumber.
    				\end{numcases}
                It is easy to see that $\{i_k\}_{k=0}^{\infty}$ is an irreducible subclass of $\widetilde{Q}$. Note that $\{i_k\}_{k=0}^{\infty}$ is also a recurrent subclass of the $\widetilde{Q}$-process since $\sum_{k=1}^\infty \frac{1}{i_k}=\infty$ (This is easy to illustrate using~\Cref{cata_dis_rec}). Because
    				\begin{displaymath}
    					\frac{1}{c}= \frac{i_k+1}{i_k+1+c}\cdot\frac{1}{c}+\frac{1}{i_k+1+c},\qquad k\geqslant1,
    				\end{displaymath}
                $\{i_k\}_{k=0}^{\infty}$ is furthermore a strongly ergodic subclass according to \cite[Teorem 4.45(3)]{chen2004}. Since $\sup_{k\geqslant 1}\frac{i_{k+1}}{i_k}<\infty$ implies
    				\begin{displaymath}
    					\sup_{k\geqslant 0}\Bigl(\frac{1}{i_k+2}+\frac{1}{i_k+3}+\cdots+\frac{1}{i_{k+1}}\Bigr)
    					\leqslant \sup_{k\geqslant 1}\frac{i_{k+1}-i_k-1}{i_k+1} < \infty,
    				\end{displaymath}
                    exploiting
    				\begin{displaymath}
                        \E_i^{(\widetilde{Q})}\mkern-1.5mu\sigma_0
                        = \E_{i+1}^{(\widetilde{Q})}\mkern-1.5mu\sigma_0 + \frac{1}{i+1}, \qquad i_k < i < i_{k+1},\enspace k\geqslant0,
    			   \end{displaymath}
                we have $ \sup_{i\geqslant0}\E_i^{(\widetilde{Q})}\mkern-1.5mu<\infty$. Construct an order-preserving conservative coupling $Q$-matrix $\overline{Q}=\bigl(\overline{q}(i,j;i^\prime,j^\prime)\bigr)$, whose marginalities are $Q$ and $\widetilde{Q}$, with non-diagonal entries
    				\begin{numcases}
    					{\overline{q}(i,j;i^\prime,j^\prime)=}
    						(i+1)\land(j+1), \quad&if\/ $i^\prime=i+1,\enspace i\geqslant0$,\nonumber\\
    											&$j^\prime=j+1,\enspace i_k<j<i_{k+1}$ for some $k\geqslant0$,\nonumber \\
    						(i+1)\land(j+1), &if\/ $i^\prime=i+1,\enspace i\geqslant0$,\nonumber\\
    											&$j^\prime=i_{k+1},\enspace j=i_k$ for some $k\geqslant0$,\nonumber \\
    						(i-j)^+,   &if\/ $i^\prime=i+1,\enspace i\geqslant0,\enspace j^\prime=j\geqslant0$,\nonumber\\
    						c, &if\/ $i^\prime=0,\enspace i\geqslant1,\enspace j^\prime=0,\enspace j=i_k$ for some $k\geqslant1$,\nonumber \\
                            \alpha_i-c, &if\/ $i^\prime=0,\enspace i\geqslant1,\enspace j^\prime=j=i_k$ for some $k\geqslant1$,\nonumber \\
                            \alpha_i, &if\/ $i^\prime=0,\enspace i\geqslant1,\enspace i_k<j^\prime=j<i_{k+1}$ for some $k\geqslant0$,\nonumber \\
    						0, & other $(i^\prime,j^\prime)\neq(i,j)$\nonumber.
    				\end{numcases}
                Denote the $\overline{Q}$-process as $\bigl(X(t),Y(t)\bigr)_{t\geqslant0}$, then we easily deduce that
    				\begin{displaymath}
                        \Psub{( i_1, i_2)}^{(\overline{Q})}\mkern-1.5mu\bigl[X(t)\leqslant Y(t)\bigr]=1,\qquad t>0,\enspace i_1\leqslant i_2.
    				\end{displaymath}
    			Hence,
    			\begin{displaymath}
                    \sup_{i\geqslant 1}\E_i^{(Q)}\mkern-1.5mu\sigma_0 \leqslant \sup_{i\geqslant 1}\E_i^{(\widetilde{Q})}\mkern-1.5mu\sigma_0 <\infty,
    			\end{displaymath}
    			so $Q$-process is strongly ergodic.
    		\end{proof}
\section{Applications to multi-dimensional examples}\label{chp_app}
    In this section, we shall apply our inverse problem criteria to some multi-dimensional models.
        Brussel's model (see \cite{yanchen1986}) is a typical model of reaction-diffusion process with several species.
		\begin{example}\label{brus}
            Let $S$ be a finite set, $E = (\Z_+^2)^S$ and let $p_k(u, v)$ be transition probability on $S$, $k = 1,2$. Denote by $e_{u 1} \in E$ the unit vector whose first component at site $u \in S$ is equal to 1 and the second component at $u$ as well as other components at $v \neq u $ all equal 0. Similarly, one can define $e_{u2}$. The model is described by the conservative $Q$-matrix $Q=(q_{ij})$:
                \begin{numcases}
                    {q(x, y)=}
                    \lambda_1a(u), \nonumber &if\/ $y=x+e_{u1}$, \\
                    \lambda_2b(u)x_1(u), \nonumber &if\/ $y=x-e_{u1}+e_{u2}$, \\
                    \lambda_3\binom{x_1(u)}{2}x_2(u), \nonumber &if\/ $y=x+e_{u1}-e_{u2}$, \\
                    \lambda_4x_1(u), \nonumber &if\/ $y=x-e_{u1}$, \\
                    x_k(u)p_k(u,v), \nonumber &if\/ $y=x-e_{uk}+e_{vk},\enspace k=1,2,\enspace v\neq u$,\\
                    0, & other $y\neq x$\nonumber,
                \end{numcases}
            and $q(x) = -q(x,x) =\sum_{y\neq x}q(x,y)$, where ${x = \Bigl(\bigl(x_1(u),x_2(u)\bigr)\st u\in S\Bigr)\in E}$. $a$ and $b$ are positive functions on $S$ and $\lambda_1,\ldots, \lambda_4$ are positive constants. Finite-dimensional Brussel's model is exponentially ergodic
            (cf.\@ \cite{chenjw1995}). We now demonstrate that it is non-strongly ergodic, which was  actually proved for the first time in \cite{wu2007}. But here we adopt different methods.
        \end{example}
        \begin{proof}
            We shall prove our assertion by two approaches. For ease of notation, we write $\widetilde{a}=\sum_{u\in S}a(u)$, $\abs{x}=\sum_{u\in S}\bigl(x_1(u)+x_2(u)\bigr)$ for $x\in E$ and also $E_i=\bigl\{x\in E\st\abs{x}=i\bigr\}$ for $i\geqslant 0$.
            \par
                    a) For each fixed $n\geqslant 1$, we construct function
                        \begin{displaymath}
                            F^{(n)}(x)=f^{(n)}_i,\qquad x\in E_i,\enspace i\geqslant 1,
                        \end{displaymath}
                    with
                        \begin{numcases}
                            {f^{(n)}_i=}
                            \frac{1}{\lambda_4}\log(i+1), \nonumber\quad &$1\leqslant i \leqslant n$, \\
                            \frac{1}{\lambda_4}\log(n+1), &$i\geqslant n+1$.\nonumber
                        \end{numcases}
                    Because
                        \begin{displaymath}\begin{split}
                            &\Bigl(1+\frac{1}{k}\Bigr)^{\ell}\leqslant \ue,\qquad 1\leqslant \ell\leqslant k,\\
                            \Bigl(1+\frac{1}{k}\Bigr)^{\ell}&\Bigl(\frac{k+1}{k+2}\Bigr)^{\frac{\lambda_1\widetilde{a}}{\lambda_4}}\leqslant \ue,\qquad 1\leqslant \ell\leqslant k,
                        \end{split}\end{displaymath}
                    we have
                        \begin{displaymath}\begin{split}
                            (\lambda_1\widetilde{a}+\lambda_4\ell)\frac{1}{\lambda_4}\log(k+1) &\leqslant\frac{\lambda_1\widetilde{a}}{\lambda_4}\log(k+1)+\ell\log k+1,\qquad 1\leqslant \ell\leqslant k,\\
                            (\lambda_1\widetilde{a}+\lambda_4\ell)\frac{1}{\lambda_4}\log(k+1) &\leqslant\frac{\lambda_1\widetilde{a}}{\lambda_4}\log(k+2)+\ell\log k+1,\qquad 1\leqslant \ell\leqslant k.
                        \end{split}\end{displaymath}
                    Now it is straightforward to check that
                        \begin{displaymath}\begin{split}
                            \Bigl(\lambda_1\widetilde{a}+\lambda_4\sum_{u\in S}x_1(u)\Bigr)f^{(n)}_i\leqslant \lambda_1\widetilde{a}f^{(n)}_{i+1}+\lambda_4\sum_{u\in S}x_1(u)f^{(n)}_{i-1}+1&,\\x\in E_i&,\enspace i\geqslant 1,\enspace n\geqslant 1,
                        \end{split}\end{displaymath}
                    where we naturally put $f^{(n)}_0=0\,(n\geqslant1)$.
            \par
                    It can be easily seen that $F^{(n)}(x)$ satisfies \Cref{in_serg_con_eq} in current setup and $\{F^{(n)}\}^{\infty}_{n=1}$ is a sequence satisfying conditions in \Cref{in_serg_con}. Consequently, we infer that finite-dimensional Brussel's model is non-strongly ergodic.
             \par
                    b) We try invoking \Cref{in_serg_con} yet with a different testing sequence. For each fixed $n\geqslant 1$, we construct function
                        \begin{displaymath}
                            F^{(n)}(x)=\sum^k_{i=1} d^{(n)}_i,\qquad x\in E_k,\enspace k\geqslant 1,
                        \end{displaymath}
                    with
                        \begin{numcases}
                            {d^{(n)}_i=}
                            \frac{1}{\lambda_4(i+1)}, \nonumber &$1\leqslant i \leqslant n$, \\
                            -\frac{1}{\lambda_1\widetilde{a}(n+1)}, \nonumber &$i=n+1$, \\
                            -\frac{1}{\lambda_1\widetilde{a}}, &$i\geqslant n+2$.\nonumber
                        \end{numcases}
                    \vskip -0.3 cm
                    Then a trivial calculation shows that $\{F^{(n)}\}^{\infty}_{n=1}$ is a sequence satisfying conditions in \Cref{in_serg_con}. So finite-dimensional Brussel's model is non-strongly ergodic.
        \end{proof}

		\begin{example}
            Let $E = \Z_+^2$. Epidemic process is defined by $Q$-matrix\break
            $Q = \Bigl(q\bigl((m, n), (m', n')\bigr)\st  (m,n), (m', n') \in E\Bigr)$ with
                \begin{numcases}
                    {q\bigl((m, n), (m', n')\bigr)=}
                    \alpha, \nonumber &if\/ $(m', n')= (m, n)$, \\
                    \gamma m, \nonumber &if\/ $(m', n')= (m, n)$, \\
                    \beta, \nonumber &if\/ $(m', n')= (m, n)$, \\
                    \delta n, \nonumber &if\/ $(m', n')= (m, n)$, \\
                    \varepsilon mn, \nonumber &if\/ $(m', n')= (m, n)$, \\
                    0, & otherwise, unless $(m', n')=(m, n)$\nonumber,
                \end{numcases}
            and $q(m, n) = -q\bigl((m,n),(m,n)\bigr)= \sum_{(m^\prime,n^\prime)\neq (m,n)}q\bigl((m, n), (m', n')\bigr)$, where $\alpha,\gamma,\beta,\delta$, and $\varepsilon$ are non-negative constants. We assume $\gamma>0$ and $\delta>0$. The $Q$-process is unique and ergodic when $\alpha+\beta>0$ (cf.\@ \cite{anderson}). Epidemic process is non-strongly ergodic if\/ $\alpha+\beta$, $\gamma$, and $\delta$ are strictly positive by \cite{wu2007}. Using similar argument as in $\Cref{brus}$, we can also carry out this result and therefore give a new proof. We will not reproduce the details here.
        \end{example}
		
        \begin{example}\label{gamma}
            Consider a conservative birth-death $Q$-matrix with birth rate $b_0=1$, $b_i=i^{\gamma}\,(i\geqslant 1)$ and death rate $a_i=i^{\gamma}\,(i\geqslant 1)$. It is known that this $Q$-matrix is regular for all $\gamma\in\R$ and the $Q$-process is recurrent. The process is ergodic iff $\gamma >1$ and strongly ergodic iff $\gamma>2$ (cf. \cite{chen2004}). We now use \Cref{in_serg_con} to demonstrate that the process is non-strongly ergodic if $\gamma\leqslant 2$. Also, we use \Cref{in_erg_con} to present that the process is non-ergodic if $\gamma\leqslant 1$.
        \end{example}
        \begin{proof}
                    a) First we prove the process is non-strongly ergodic if $\gamma\leqslant2$ using \Cref{in_serg_con}. For each fixed $n\geqslant 1$, define
                        \begin{displaymath}
                            y^{(n)}_k=\sum^k_{i=1} d^{(n)}_i, \qquad k\geqslant 1,
                        \end{displaymath}
                        with
                        \begin{numcases}
                            {d^{(n)}_i=}
                            \frac{1}{i^{1+\frac{1}{i}}},\quad  &$1\leqslant i \leqslant n$, \nonumber\\
                            \frac{1}{i^{1+\frac{1}{n+1}}},  &$i\geqslant n+1$ \nonumber.
                        \end{numcases}
                    When $\gamma\leqslant 2$, we have the following estimates:
                        \begin{subequations}\label{calc}\begin{align}
                            \frac{1}{i^{1+\frac{1}{i}}}-\frac{1}{(i+1)^{1+\frac{1}{i+1}}}&\leqslant \frac{1}{i^{\gamma}},\qquad i\geqslant 1\label{calc1},\\
                            \frac{1}{i^{1+\frac{1}{n+1}}}-\frac{1}{(i+1)^{1+\frac{1}{n+1}}}&\leqslant \frac{1}{i^{\gamma}},\qquad i\geqslant n+1\label{calc2}.
                        \end{align}\end{subequations}
                    \par
                    In fact, \Cref{calc1} holds obviously for $i=1,2$. Put
						\begin{displaymath}
							g_1(x)=\frac{1}{x^{1+\frac{1}{x}}},\qquad x>0.
						\end{displaymath}
					Differentiating $g_1$, we obtain
                        \begin{displaymath}
                            \abs{ g_1^\prime(x)}=\frac{1}{x^{2+\frac{1}{x}}}\Bigl(1+\frac{1-\log x}{x}\Bigr)
                            \leqslant \frac{1}{x^{2}}
                            \leqslant \frac{1}{x^{\gamma}},\qquad \text{if } x\geqslant \ue.
                        \end{displaymath}
                    By Lagrange mean value theorem, \Cref{calc1} holds.
                    \par
                    We turn to \Cref{calc2}. Denote $\varepsilon=\frac{1}{n+1}$, then we have
                        \begin{displaymath}\begin{split}
                            \frac{1}{i^{1+\varepsilon}}-\frac{1}{(i+1)^{1+\varepsilon}}&=\frac{(i+1)^{1+\varepsilon}-i^{1+\varepsilon}}{i^{1+\varepsilon}(i+1)^{1+\varepsilon}}
                            \leqslant \frac{(1+\varepsilon)(i+1)^{\varepsilon}}{i^{1+\varepsilon}{(i+1)}^{1+\varepsilon}}\\
                            &=\frac{1+\varepsilon}{i^{1+\varepsilon}(i+1)}= \frac{1}{i^{2}}(1+\varepsilon)\frac{i^{1-\varepsilon}}{i+1},
                        \end{split}\end{displaymath}
                    where ``$\leqslant$'' is obtained by mean value theorem.\\
                    Define
                        \begin{displaymath}
                            g_2(x)=(1+\varepsilon)\frac{x^{1-\varepsilon}}{x+1},\qquad x>0.
                        \end{displaymath}
                    By calculus method, we see that $g_2$ is decreasing on the interval $[n+1,\infty)$. One can also verify easily that $g_2(n+1)\leqslant 1$. Therefore
                        \begin{displaymath}
                            g_2(i)=(1+\varepsilon)\frac{i^{1-\varepsilon}}{i+1}\leqslant 1,\qquad i\geqslant n+1.
                        \end{displaymath}
                    And \Cref{calc2} follows.\\
                    By \Cref{calc}, $\bigl(y^{(n)}_i\bigr)_{i\geqslant 1}$ satisfies \Cref{in_serg_con_eq} in current setup:
                        \begin{equation}\label{gameq}
                            d^{(n)}_i\leqslant d^{(n)}_{i+1}+\frac{1}{i^\gamma},\qquad i\geqslant1.
                        \end{equation}
                    and $\{y^{(n)}\}^{\infty}_{n=1}$ is a sequence satisfying all conditions in \Cref{in_serg_con}. Consequently, we conclude that the $Q$-process is non-strongly ergodic if $\gamma\leqslant 2$.
					\par
                    b) We use \Cref{in_serg_con} to deduce non-strong ergodicity yet with a different testing sequence. Define
                        \begin{numcases}
                            {d^{(n)}_i=}
                            \frac{1}{(i+9)\log(i+9)},  &$1\leqslant i \leqslant n$, \nonumber\\
                            \frac{1}{(n+9)\log(n+9)}-\sum^{i-1}_{k=n}\frac{1}{k^2},  \quad&$i\geqslant n+1$\nonumber.
                        \end{numcases}
                    Because
                    \begin{displaymath}\begin{split}
                            &\sum^{\infty}_{k=n}\frac{1}{k^2}>\int^{\infty}_{n}\frac{1}{x^2}\df{x} =\frac{1}{n}>\frac{1}{(n+9)\log(n+9)},\quad n\geqslant 1,\\
                            &\frac{1}{(i+9)\log(i+9)}-\frac{1}{(i+10)\log(i+10)}\leqslant \frac{1}{i^2},\quad i\geqslant 1,
                    \end{split}\end{displaymath}
                    it is straightforward to verify that $\{y^{(n)}\}^{\infty}_{n=1}$, with
                    $ y^{(n)}_k=\sum^k_{i=1} d^{(n)}_i\,(k\geqslant 1)$, is a sequence satisfying conditions in \Cref{in_serg_con}.
					\par
                    c) We now turn to non-ergodicity. For each $n\geqslant1$, we set
                        \begin{displaymath}
                            y^{(n)}_0=n+1,\quad  y^{(n)}_i=\sum^i_{k=1}d^{(n)}_k\,(i\geqslant1),
                        \end{displaymath}
                    where $d^{(n)}_k=n-\sum_{j=1}^{k-1}\frac{1}{j}\,(k\geqslant1)$ and $\sum_{\varnothing}=0$.
                    Hence for each $n\geqslant 1$, $\bigl(y^{(n)}_i\bigr)_{i\geqslant0}$ satisfies
                        \begin{displaymath}\begin{split}
                                y^{(n)}_0&\leqslant y^{(n)}_1+1,\\
                                d^{(n)}_i&\leqslant d^{(n)}_{i+1}+\frac{1}{i^\gamma},\qquad i\geqslant1,
                        \end{split}\end{displaymath}
                    which is exactly \Cref{in_erg_con_eq} in current setup.
                    So, $\{y^{(n)}\}^{\infty}_{n=1}$, with $y^{(n)}=\bigl(y^{(n)}_i\bigr)_{i\geqslant0}$, is a sequence for \Cref{in_erg_con}. Therefore, the $Q$-process is non-ergodic for $\gamma\leqslant1$.
        \end{proof}
        \par
        We further investigate a multi-dimensional version of \Cref{gamma}.
        \renewcommand{\thetheorem}{\ref{gamma}$^\prime$}
        \addtocounter{theorem}{-1}
        \begin{example}\label{gammamult}
             Let $S$ be a finite set, $E = (\Z_+)^S$ and $p(u, v)$ a transition probability matrix on $S$. We denote by $\theta \in E$ whose components are identically 0 and denote by $e_u \in E$ the unit vector whose component at site $u \in S$ is equal to 1 and other components at $v \neq u $ all equal 0. Define an irreducible $Q$-matrix $Q=\bigl(q(x,y)\st x,y\in E\bigr)$ as follows:
                \begin{numcases}
                    {q(x, y)=}
                    x(u)^{\gamma}, \nonumber &if\/ $y=x+e_u$,\enspace $x\neq\theta$, \\
                    1, \nonumber &if\/ $x=\theta$,\enspace $y=e_u$, \\
                    x(u)^{\gamma}, \nonumber &if\/ $y=x-e_u$, \\
                    x(u)p(u,v), \quad\nonumber &if\/ $y=x-e_u+e_v$,\enspace $v\neq u$,\\
                    0, & other $y\neq x$\nonumber,
                \end{numcases}
            and $ q(x) = -q(x,x)=\sum_{y\neq x}q(x,y)$, where $x = \bigl(x(u)\st u\in S\bigr) \in E$. It is easy to check by \cite[Theorem 1]{yanchen1986} that the $Q$-process is unique for all $\gamma\in \R$. We now prove the following results:
                \begin{enumerate}[(1)]
                    \item When $\gamma\leqslant 2$, the $Q$-process is non-strongly ergodic.
                    \item When $\gamma\leqslant 1$, the $Q$-process is non-ergodic.
                \end{enumerate}
        \end{example}
        \renewcommand{\thetheorem}{\arabic{theorem}}
        \begin{proof}
            We will reduce multi-dimensional problem to 1-dimensional case. We write $\abs{x}=\sum_{u\in S}x(u)$ for $x\in E$ and $E_i=\bigl\{x\in E\st\abs{x}=i\bigr\}$ for $i\geqslant 0$.
			\par
            a) Using \Cref{in_serg_con}, to prove that the $Q$-process is non-strongly ergodic for $\gamma\leqslant2$, we need only construct sequence $\{F^{(n)}\}^{\infty}_{n=1}$ satisfying the conditions. We may guess $F^{(n)}$ is identically $f^{(n)}_i$ on $E_i$ for each $i\geqslant1$, and set
                \begin{displaymath}
                    d^{(n)}_1=f^{(n)}_1,\quad
                    d^{(n)}_i=f^{(n)}_i-f^{(n)}_{i-1}\,(i\geqslant 2).
                \end{displaymath}
            Now, \Cref{in_serg_con_eq} becomes
				\begin{displaymath}
                    d^{(n)}_i\leqslant d^{(n)}_{i+1}+\frac{1}{\sum_{u\in S}x(u)^{\gamma}},\qquad x\in E_i,\enspace i\geqslant 1.
				\end{displaymath}
			Because
				\begin{displaymath}
                    \sum_{u\in S}x(u)^{\gamma}\leqslant \sum_{u\in S}x(u)^2\leqslant \Bigl(\sum_{u\in S}x(u)\Bigr)^2=i^2,\qquad x\in E_i,\enspace i\geqslant 1,\enspace \gamma\leqslant2,
				\end{displaymath}
			we need only construct sequence satisfying
                \begin{displaymath}
                    d^{(n)}_i\leqslant d^{(n)}_{i+1}+\frac{1}{i^2},\qquad i\geqslant1,
                \end{displaymath}
            which is exactly \Cref{gameq} with $\gamma=2$. Now we can proceed our proof as in \Cref{gamma}. The $Q$-process is therefore non-strongly ergodic if $\gamma\leqslant2$.
			\par
            b) To deal with non-ergodicity, according to the discussions in a) and using similar notations, we need only consider equation
                 \begin{displaymath}\begin{split}
                    y_0&\leqslant y_1+1,\\
                    d_i&\leqslant d_{i+1}+\frac{1}{i},\qquad i\geqslant1.
                 \end{split}\end{displaymath}
                And we can proceed as in proof c) of \Cref{gamma}. Hence the multi-dimensional process is non-ergodic for $\gamma\leqslant1$.
        \end{proof}

\noindent {\bf Acknowledgement:} Thanks to Prof.\@ Mu-Fa Chen for his careful guidance and valuable suggestions. This work is supported by the National Nature Science Foundation of China (Grant No.\@ 11771046).

\bibliographystyle{plain}
\bibliography{ip}

\end{document}